\documentclass[12pt]{article}
\usepackage{graphicx}
\usepackage{hyperref}
\usepackage{mathrsfs}
\usepackage{amsfonts,amsmath,amsthm, amssymb}
\usepackage{latexsym, euscript, epic, eepic, color}
\usepackage{indentfirst}
\usepackage{color}
\usepackage{enumerate}

\setlength{\parindent}{2em}
\pagestyle{plain} \headsep=5mm \headheight=5mm \textwidth=155mm
\textheight=23cm \oddsidemargin=5mm \evensidemargin=5mm
\topmargin=0in

\makeatletter
\@addtoreset{equation}{section}
\makeatother
\newtheorem{theorem}{Theorem}[section]
\newtheorem{lemma}[theorem]{Lemma}
\newtheorem{proposition}[theorem]{Proposition}

\newtheorem{corollary}[theorem]{Corollary}

\theoremstyle{definition}
\newtheorem{definition}[theorem]{Definition}

\newtheorem{remark}[theorem]{Remark}
\makeatletter
\newcommand{\rmnum}[1]{\romannumeral #1}
\newcommand{\Rmnum}[1]{\expandafter\@slowromancap\romannumeral #1@}
\makeatother

\begin{document}
\title{Metric results for the eventually always hitting points and level sets in subshift with specification}

\author{Bo Wang and Bing Li\thanks{Corresponding author}\\
\small \it Department of Mathematics, South China University of Technology,\\
\small \it Guangzhou 510640, P.R. China\\
\small \it E-mails:  202010106047@mail.scut.edu.cn,  \\
\small \it           scbingli@scut.edu.cn }
\date{\today}
\maketitle
\begin{center}
\begin{minipage}{120mm}
{\small {\bf Abstract.}
We study the set of eventually always hitting points for symbolic dynamics with specification. The measure and Hausdorff dimension of such sets are obtained.
Moreover, we establish the stronger metric results by introducing a new quantity $L_{N}(\omega)$ which describes the maximal length of string of zeros of the
prefix among the first $N$ iterations of $\omega$ in symbolic
space. The Hausdorff dimensions of the
level sets for this quantity are also completely determined.
 }
\end{minipage}
\end{center}
\vskip0.5cm {\small{\bf Key words and phrases}. \ Eventually always hitting points, Subshift, Specification, Hausdorff dimension, Ruelle operator}
\footnotetext{MSC (2010): 11J83, 37A50, 37B10 (primary), 28A80, 28D05, 37C30 (secondary).}

\section{Introduction}
\subsection{The set of eventually always hitting points}
Let $(X,d,\mathcal{B},\mu,T)$ be a metric measure preserving dynamical system. Namely $\mu$ is a probability measure on $(X,\mathcal{B})$ and $T$-invariant,
$(X,d)$ is a metric space. Here the measure and metric structure are compatible meaning that all Borel sets are $\mu$-measurable.
Let $\{E_{n}\}_{n=1}^{\infty}$ be a sequence of subsets in $\mathcal{B}$ and let
\begin{equation*}
H_{io}(T,\{E_{n}\}):=\{x\in X:T^{n}x\in E_{n}\ {\rm for\ i.o.} \}.
\end{equation*}
Here and throughout, ``i.o.'' stands for ``infinitely often''. The sets $E_{n}$ can be thought of as targets that the orbit under $T$ of points in $X$ have to hit.
The interesting situation is usually, when the diameters of $E_{n}$ tend to zero as $n$ increases.
It is thus natural to refer to $H_{io}(T,\{E_{n}\})$ as the corresponding shrinking target set associated with the given dynamical system and target sets.
The classical ``shrinking target problem'' is to consider the $\mu$-measure and the Hausdorff dimension of $H_{io}(T,\{E_{n}\})$, which was formulated in $\cite{HV1995}$.
There are many results about shrinking target problem, we refer the reader to $\cite{CK2001,FMP2007,HV2002,LLVZ2023}$ for the $\mu$-measure of $H_{io}(T,\{E_{n}\})$ and $\cite{HV1995,HV1997,HV1999,KR2018,LLVZ2023,LWWX2014,SW2013,U2002}$ for the Hausdorff dimension of $H_{io}(T,\{E_{n}\})$.
In this paper we will investigate a set which is closely related to $H_{io}(T,\{E_{n}\})$, namely the set of eventually always hitting points. This is the set of points $x$ satisfies that for all large enough $N$, there exists a $n\in\{1,2,\cdots,N\}$ such that $T^{n}x\in E_{N}$. That is,
\begin{equation*}
\begin{aligned}
H_{ea}(T,\{E_{n}\}):&=\{x\in X:\forall\ N\gg1, 1\leq \exists n\leq N,\ {\rm such\ that}\ T^{n}x\in E_{N}\}\\
&=\bigcup_{m=1}^{\infty}\bigcap_{N=m}^{\infty}\bigcup_{n=1}^{N}T^{-n}E_{N}.
\end{aligned}
\end{equation*}
Here and throughout, ``$N\gg1$'' stands for ``$N$ large enough''. The name eventually always hitting points was introduced by Kelmer $\cite{K2017}$.
We remark that Kelmer $\cite{K2017}$ studied a slightly different definition of eventually always hitting points, and required that for all $N\gg1$,
we have $T^{n}x\in E_{N}$ for some $0\leq n\leq N-1$, whereas we require $1\leq n\leq N$. For the result discussed in this paper,
it is insignificant which definition we use.

Note that $H_{io}(T,\{E_{n}\})$ is a limsup set and $H_{ea}(T,\{E_{n}\})$ is a liminf set. If $\mu(E_{n})\to0$ and $\{E_{n}\}_{n=1}^{\infty}$ is a nested sequence, that is, $E_{n+1}\subset E_{n}$ holds for all $n$, then $H_{ea}(T,\{E_{n}\})\subset H_{io}(T,\{E_{n}\})$ except for a null measure set $\cite{KKP2020}$. In this sense, $H_{ea}(T,\{E_{n}\})$ is smaller than $H_{io}(T,\{E_{n}\})$. Similar with the shrinking target problem, we have the following two problems.\\
(\textbf{P1}) What is the $\mu$-measure of $H_{ea}(T,\{E_{n}\})$?\\
(\textbf{P2}) What is the Hausdorff dimension of $H_{ea}(T,\{E_{n}\})$ if $\mu(H_{ea}(T,\{E_{n}\}))=0$?

There have recently been several results about the $\mu$-measure of the set $H_{ea}(T,\{E_{n}\})$. If $\mu$ is ergodic and $\{E_{n}\}$ is a nested sequence,
the $\mu$-measure of $H_{ea}(T,\{E_{n}\})$ is either zero or one (see $\cite{KKP2020}$).
\begin{enumerate}[\textbullet]
\item Kelmer gave necessary and sufficient conditions for $\mu(H_{ea}(T,\{E_{n}\}))=1$ in the setting of flows on hyperbolic manifolds $\cite{K2017}$, and for flows on homogeneous spaces together with Yu $\cite{KY2019}$.
\item In the general setting, Kelmer showed $\cite{K2017}$ that if there exists $c<1$ such that $\mu(E_{n})\leq\frac{c}{n}$ for infinitely many $n$, then $\mu(H_{ea}(T,\{E_{n}\}))=0$.
\item Kleinbock, Konstantoulas and Richter gave sufficient conditions for $\mu(H_{ea}(T,\{E_{n}\}))$ $=0$ ($\mu(H_{ea}(T,\{E_{n}\}))=1$) respecitively in some mixing systems $\cite{KKR2019}$, such as Gauss map, Product systems and Bernoulli schemes.
For the Gauss map and balls $E_{n}$ with centers in 0, they proved that if there exists $c>\frac{1}{\log 2}$, such that $\mu(E_{n})\geq\frac{c\log\log n}{n}$ for all but finitely many $n$, then $\mu(H_{ea}(T,\{E_{n}\}))=1$, and if there exists $c<\frac{1}{\log 2}$, such that $\mu(E_{n})\leq\frac{c\log\log n}{n}$ for all but finitely many $n$, then $\mu(H_{ea}(T,\{E_{n}\}))=0$.
\item Kirseborn, Kunde and Persson $\cite{KKP2020}$ obtained results similar to those of Kleinbock, Konstantoulas and Richter.
Their results hold for a wider class of dynamical systems, but they need to add stronger conditions on $\mu(E_{n})$ in order to be able to conclude that the measure of $H_{ea}(T,\{E_{n}\})$ is one or zero. For piecewise expanding interval map including the doubling map, Gauss map, and for some quadratic maps, they proved that $\mu(H_{ea}(T,\{E_{n}\}))=1$ when $\mu(E_{n})\geq\frac{c(\log n)^{2}}{n}$ for some $c>0$ and all sufficiently large $n$.
Furthermore, they gave a sufficient condition for $\mu(H_{ea}(T,\{E_{n}\}))=0$ under additional assumptions, they prove that if there exists $c>0$, such that $\mu(E_{n})\leq \frac{c}{n}$ for infinitely many $n$, then $\mu(H_{ea}(T,\{E_{n}\}))=0$.
\item Ganotaki and Persson $\cite{GP2021}$ gave sufficient conditions for $\mu(H_{ea}(T,\{E_{n}\}))=1$ in the systems which have exponential decay of correlations for either H\"{o}lder continuous functions or functions of bounded variation.
They also gave an asymptotic estimate as $N\to\infty$ on the number of $n\leq N$ with $T^{n}x\in E_{N}$.
\item Other recent results can be found in $\cite{KO2021,KW2019}$.
\end{enumerate}
Unfortunately, the sufficient condition and necessary condition above does not coincide.

There are also some results about the Hausdorff dimension of the set $H_{ea}(T,\{E_{n}\})$ for $\beta$-transformation $T_{\beta}(x)=\beta x({\rm mod} \ 1)$ on $X=[0,1]$.
\begin{enumerate}[\textbullet]
\item Bugeaud and Liao $\cite{BL2016}$ proved that $\dim_{\rm H}H_{ea}(T,\{E_{n}\})=\left(\frac{1-v}{1+v}\right)^{2}$ when $E_{n}=[0,\beta^{-vn})$, where $v\in[0,1]$. Here and below, $\dim_{\rm H}$ stands for Hausdorff dimension.
\item Wu $\cite{W2020}$ extended the result of Bugeaud and Liao to every points $x_{0}$ in the unit interval.
\item Wu and Zheng $\cite{WZ2022}$ gave the upper bound and lower bound of $\dim_{\rm H}H_{ea}(T,\{E_{n}\})$ when $E_{n}=[0,\psi(n))$, where $\psi$ is a general positive function.
\end{enumerate}
\subsection{Main results}
In this paper, we will consider the set of eventually always hitting points for symbolic dynamics. Given a positive integer $m\geq2$, let $\mathcal{A}:=\{0,1,\cdots,m-1\}$. We denote
\begin{equation*}
\mathcal{A}^{\mathbb{N}}=\{\omega_{1}\cdots\omega_{n}\cdots: \omega_{i}\in\mathcal{A}, \forall\ i\geq1\}.
\end{equation*}
The set $\mathcal{A}^{\mathbb{N}}$ is compact in the metric $d(u,\omega)=m^{-\min\{i:u_{i}\neq \omega_{i}\}}$ if $u\neq\omega$ and $d(u,\omega)=0$ if $u=\omega$. Furthermore, the shift map $\sigma:\mathcal{A}^{\mathbb{N}}\to\mathcal{A}^{\mathbb{N}}$ defined by $(\sigma u)_{i}=u_{i+1}$ is continuous.
We call $(\mathcal{A}^{\mathbb{N}},d,\sigma)$ the symbolic space. A subshift on $\mathcal{A}$ is a closed set $\Sigma\subset\mathcal{A}^{\mathbb{N}}$ with $\sigma(\Sigma)\subset\Sigma$.
If $\Sigma$ has specification, by the theory of Ruelle operator, there exists a unique $\sigma$-invariant Borel probability measure $\mu$ satisfies the Gibbs property. We refer the reader to Section $\ref{20}$ for more details.

In this paper, we are dedicated to answer (\textbf{P1}) and (\textbf{P2}) when the targets are balls with center $0^{\infty}$ in a subshift with specification.
Assume that $0^{\infty}\in\Sigma$, for function $\psi:\mathbb{R}^{+}\to\mathbb{R}^{+}$ with $\lim\limits_{x\to\infty}\psi(x)=0$,
let $E_{n}=B(0^{\infty},\psi(n))$, where $B(0^{\infty},\psi(n)):=\{\omega\in\Sigma:d(\omega,0^{\infty})<\psi(n)\}$.
For simplicity, we denote $H_{ea}(\sigma,\{E_{n}\})$ by $H_{ea}(\sigma,\psi)$. Denote by $h(\Sigma)$ the topological entropy of $\Sigma$, the definition of topological entropy can be found in Section $\ref{20}$.
Throughout this paper, let $A=e^{h(\Sigma)}$.

We first answer the problems (\textbf{P1}) and (\textbf{P2}) by the two following theorems. Theorem $\ref{19}$ describes the $\mu$-measure of $H_{ea}(\sigma,\psi)$.
Theorem $\ref{110}$ gives the Hausdorff dimension of $H_{ea}(\sigma,\psi)$ when $\lim\limits_{N\to\infty}\frac{-\log_{m}\psi(N)}{N}$ exists.
\begin{theorem}\label{19}
Suppose that $\Sigma$ has specification, $h(\Sigma)>0$ and $0^{\infty}\in\Sigma$. We have
\begin{equation*}
\mu(H_{ea}(\sigma,\psi))=\begin{cases} 0, &{\rm if}\ \limsup\limits_{N\to\infty}\frac{-\log_{m}\psi(N)}{\log_{A}N}>1,\\ 1, &{\rm if}\ \limsup\limits_{N\to\infty}\frac{-\log_{m}\psi(N)}{\log_{A}N}<1.\end{cases}
\end{equation*}
\end{theorem}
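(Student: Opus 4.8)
The plan is to translate the hitting condition into a covering/counting statement about cylinders and then run a Borel–Cantelli argument in each direction. Writing $E_N = B(0^\infty,\psi(N))$, note that $d(\omega,0^\infty)<\psi(N)$ means precisely that $\omega$ starts with $\ell_N:=\lceil -\log_m\psi(N)\rceil$ zeros (more precisely, $\omega_1=\cdots=\omega_{\ell_N}=0$, after adjusting by the floor/ceiling which is harmless in the limit). Thus $\sigma^n\omega\in E_N$ says that $\omega$ has a block of $\ell_N$ consecutive zeros starting at position $n+1$, and $\omega\in H_{ea}(\sigma,\psi)$ says that for all large $N$ there is $1\le n\le N$ with $\omega_{n+1}\cdots\omega_{n+\ell_N}=0^{\ell_N}$. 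Equivalently, if we let $L_N(\omega)$ be the longest run of zeros appearing as $\omega_{n+1}\cdots$ for some $1\le n\le N$ (the quantity flagged in the abstract), then $\omega\in H_{ea}(\sigma,\psi)$ iff $L_N(\omega)\ge \ell_N$ for all large $N$. So the theorem reduces to estimating, for $\mu$-a.e.\ $\omega$, the growth rate of $L_N(\omega)$, and comparing it with $\ell_N\sim -\log_m\psi(N)$.

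For the convergence (measure-zero) half: suppose $\limsup_N \ell_N/\log_A N>1$, so along a subsequence $N_k$ we have $\ell_{N_k}\ge (1+\varepsilon)\log_A N_k$, i.e.\ $A^{\ell_{N_k}}\ge N_k^{1+\varepsilon}$. Using the Gibbs property of $\mu$ (so $\mu([0^\ell])\asymp A^{-\ell}$ up to the potential's contribution, which for the measure of maximal entropy is $\asymp A^{-\ell}$; in general one argues with $\mu([0^\ell])$ decaying like $e^{-\ell P}$ and uses that $0^\infty$ is a fixed point so the Birkhoff sums of the potential along $0^\ell$ are controlled), the expected number of starting positions $n\le N_k$ with an $\ell_{N_k}$-run of zeros is $\le N_k\cdot C A^{-\ell_{N_k}}\le C N_k^{-\varepsilon}\to0$. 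Since $\omega\in H_{ea}$ forces at least one such $n$ for $N=N_k$, a first-moment (Markov) bound plus Borel–Cantelli over $k$ gives $\mu(H_{ea}(\sigma,\psi))=0$.

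For the full-measure half: suppose $\limsup_N \ell_N/\log_A N<1$, so $\ell_N\le (1-\varepsilon)\log_A N$, i.e.\ $A^{\ell_N}\le N^{1-\varepsilon}$, for all large $N$. We must show that a.e.\ $\omega$ eventually always has, among $n\le N$, a block of $\ell_N$ zeros. The natural approach is to split $\{1,\dots,N\}$ into roughly $N/\ell_N$ disjoint windows of length $\ell_N$; the events ``window $j$ is all zeros'' are not independent, but by specification the Gibbs measure has exponential decay of correlations (or one can insert a bounded gap between windows via the specification constant and use quasi-independence of cylinders), so that the probability that \emph{none} of the $\asymp N/\ell_N$ windows is all-zero is at most $(1-cA^{-\ell_N})^{cN/\ell_N}\le \exp(-c' N^{\varepsilon}/\log N)$. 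Summing this over $N$ converges, so by Borel–Cantelli the bad event happens only finitely often, a.e.; hence $\mu(H_{ea}(\sigma,\psi))=1$. The ergodic $0$–$1$ law quoted from \cite{KKP2020} is consistent with this but we get the sharp statement directly.

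The main obstacle is the full-measure direction, specifically making the near-independence of the ``all-zero window'' events rigorous with only the specification hypothesis rather than, say, a Bernoulli or $\psi$-mixing assumption. The fix is to use the specification property quantitatively: it lets one concatenate admissible words with a bounded connecting word of length at most the specification constant $\tau$, which yields two-sided bounds $\mu([u]\cap \sigma^{-(|u|+\tau+k)}[v])\asymp \mu([u])\mu([v])$ for cylinders, uniformly; inserting these $\tau$-gaps between the $\ell_N$-windows costs only a constant factor in the number of windows and does not affect the exponent $\varepsilon$. One also needs the elementary estimate $\mu([0^{\ell_N}])\ge c A^{-\ell_N}$, which follows from the Gibbs property together with the fact that $0^\infty$ is $\sigma$-fixed so the potential's Birkhoff sum over $0^{\ell_N}$ grows linearly with a controlled rate absorbed into constants; a careful statement of this, including the passage between $\psi(N)$, $-\log_m\psi(N)$ and the integer $\ell_N$, is the only genuinely technical point.
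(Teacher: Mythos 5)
Your reduction of the theorem to the growth of $L_N(\omega)$ versus $\ell_N\sim-\log_m\psi(N)$ is exactly the paper's starting point (the sandwich $\eqref{18}$), and your convergence half is essentially the paper's: the first-moment bound $\mu\{\exists\, n\le N_k:\ \sigma^n\omega\in I_{\ell_{N_k}}(0^\infty)\}\le N_k\,\gamma A^{-\ell_{N_k}}\le \gamma N_k^{-\varepsilon}$ uses the Gibbs property with $\rho=e^{h(\Sigma)}=A$ (Lemma $\ref{21}$; since the potential is constant there is no extra ``Birkhoff sum'' issue), and then $\mu(H_{ea})\le\liminf_k \gamma N_k^{-\varepsilon}=0$ by Fatou — you do not even need Borel--Cantelli over $k$, and indeed $\sum_k N_k^{-\varepsilon}$ need not converge for an arbitrary subsequence, so phrase it as a liminf bound. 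This half is fine.

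The full-measure half is where you diverge from the paper, and where there is a genuine gap. The paper does not attempt a direct blocking argument: it deduces summable mixing from the Ruelle-operator estimates (Lemma $\ref{24}$) and then invokes the quantitative Borel--Cantelli theorem of \cite{LLVZ2023} (Lemma $\ref{26}$), which is an almost-sure strong law $R(N,\omega)=F(N)+O(F^{1/2}(\log F)^{3/2+\epsilon})$ for the hit-counting function; comparing $R(N)$ with $R(\lfloor N^{1-\epsilon}\rfloor)$ then produces a hit in $(\lfloor N^{1-\epsilon}\rfloor,N]$ for all large $N$ at once. Your route instead needs the individual probabilities $\mu(G_N^c)$, $G_N=\{\exists\, n\le N:\ \omega_{n+1}\cdots\omega_{n+\ell_N}=0^{\ell_N}\}$, to be summable, and the bound $(1-cA^{-\ell_N})^{cN/\ell_N}$ you claim would indeed do it. But the justification you offer — gaps of bounded length (the specification constant) between windows plus the two-sided quasi-independence $\mu([u]\cap\sigma^{-(|u|+\tau+k)}[v])\asymp\mu([u])\mu([v])$ — is not enough. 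Pairwise quasi-independence with a multiplicative constant $c_2>1$ accumulates under the iterated conditioning over $J\asymp N/\ell_N$ windows to a factor $c_2^{J}$, which swamps $(1-cA^{-\ell_N})^{J}$; and via Chung--Erd\H{o}s it only yields $\mu(G_N)\ge 1/c_2-o(1)$, i.e.\ $\mu(G_N^c)$ bounded away from $1$ but not summable. What you actually need per step is an error of the form $1+O(\theta^{g})$ with $\theta^{g}=o\bigl(\mu([0^{\ell_N}])\bigr)=o(A^{-\ell_N})$, i.e.\ gaps $g$ of length proportional to $\ell_N$ (which still costs only a constant factor in the number of windows, since $g\asymp\ell_N$), together with a $\psi$-mixing-type estimate $\mu([u]\cap\sigma^{-(|u|+g)}G)\le(1+C\theta^{g})\mu([u])\mu(G)$ valid uniformly over unions of cylinders $G$; the paper's Lemma $\ref{24}$ as stated hides the dependence of the $O$-constant on the length of the cylinder $E$, so this uniform version also has to be extracted from the transfer-operator argument. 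With that repair your blocking argument closes; without it, the key inequality is unproved.
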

\begin{remark}
By Lemma $\ref{24}$, we know that the system $(\Sigma,\sigma,\mu)$ is exponentially mixing, which implies $\sigma$ is ergodic.
Therefore, the first part of Theorem $\ref{19}$ can be deduced from the result of Kirseborn, Kunde and Persson $\cite[\rm Corollary\ 1]{KKP2020}$.
However, their sufficient condition on $\mu(H_{ea})=1$ requires that $X\subset\mathbb{R}$.
\end{remark}
In view of Theorem $\ref{19}$, (\textbf{P2}) can be regarded as
what is the Hausdorff dimension of $H_{ea}(\sigma,\psi)$ if $\limsup\limits_{N\to\infty}\frac{-\log_{m}\psi(N)}{\log_{A}N}>1$? From Corollary
$\ref{121}$ below, we show that $$\dim_{\rm H}H_{ea}(\sigma,\psi)=\dim_{\rm H}\Sigma$$ if $\limsup\limits_{N\to\infty}\frac{-\log_{m}\psi(N)}{\log_{A}N}<+\infty$. The remaining case
is $\limsup\limits_{N\to\infty}\frac{-\log_{m}\psi(N)}{\log_{A}N}=+\infty$, which means that the growth rate of $-\log_{m}\psi(N)$ is larger than $\log N$.
Then the natural candidate for comparing rate is $N$ instead of $\log N$. In the following we assume that $\lim\limits_{N\to\infty}\frac{-\log_{m}\psi(N)}{N}$ exists.
\begin{theorem}\label{110}
Assume that $\Sigma$ has specification, $h(\Sigma)>0$ and $0^{\infty}\in\Sigma$. Suppose $\lim\limits_{N\to\infty}\frac{-\log_{m}\psi(N)}{N}$ exists, denoted by $\tau$
($\tau$ can be $+\infty$).\\ (\rmnum{1}) If $\tau>1$, then
\begin{equation*}
H_{ea}(\sigma,\psi)=\bigcup_{n=0}^{\infty}\sigma^{-n}(\{0^{\infty}\}).
\end{equation*}
Here and below, $\sigma^{-1}(\{\omega\}):=\{u\in\Sigma:\sigma(u)=\omega\}$ for all $\omega\in\Sigma$.\\
(\rmnum{2}) If $0\leq \tau\leq1$, then $$\dim_{\rm H}H_{ea}(\sigma,\psi)=\left(\frac{1-\tau}{1+\tau}\right)^{2}\dim_{\rm H}\Sigma.$$
\end{theorem}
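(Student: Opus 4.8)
The plan is to treat the two parts separately, with part (i) being essentially a warm-up that follows from the combinatorial structure of $H_{ea}$, and part (ii) being the substantial computation of both bounds on the Hausdorff dimension. For part (i), recall that
\[
H_{ea}(\sigma,\psi)=\bigcup_{m=1}^{\infty}\bigcap_{N=m}^{\infty}\bigcup_{n=1}^{N}\sigma^{-n}(B(0^{\infty},\psi(N))).
\]
Since $\tau>1$, the radius $\psi(N)$ decays faster than $m^{-N}$ eventually, so for $N$ large, $B(0^{\infty},\psi(N))\subset[0^N]$ (the cylinder of $N$ zeros), and the condition $\sigma^n\omega\in B(0^{\infty},\psi(N))$ for some $n\le N$ forces $\omega$ to contain a block of roughly $N$ consecutive zeros starting at a position $\le N$. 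As $N\to\infty$ with the starting position bounded by $N$, pushing this through for all large $N$ forces some $\sigma^n\omega=0^{\infty}$, i.e. $\omega\in\bigcup_{n\ge0}\sigma^{-n}(\{0^{\infty}\})$; conversely each such point trivially lies in $H_{ea}$. I would make the "roughly $N$ consecutive zeros at bounded position implies eventually all zeros" step precise by a pigeonhole/diagonal argument on the starting positions $n\le N$.

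For part (ii), the lower bound is obtained by constructing an explicit Cantor-type subset of $H_{ea}(\sigma,\psi)$ and applying the mass distribution principle. The natural construction, mimicking Bugeaud--Liao, is to fix a rapidly increasing sequence $N_k$ and, on each scale, prescribe a block of $\approx \tau N_k$ zeros (to guarantee hitting the target $E_{N_k}$ at some bounded time) while leaving $\approx (1-\tau)N_k$ free coordinates where we use the specification property to glue in arbitrary admissible words of $\Sigma$, thereby injecting entropy $h(\Sigma)$ on a fraction $\frac{1-\tau}{1+\tau}$ of the coordinates in the limit; the square arises because one must both reach the target and recover afterwards, each costing a factor $\frac{1-\tau}{1+\tau}$. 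I would define a Bernoulli-like measure on this set using the Gibbs measure $\mu$ on the free blocks, compute the local dimension via the Gibbs property (Lemma~\ref{24} and the Ruelle-operator machinery of Section~\ref{20}), and conclude $\dim_{\rm H}H_{ea}(\sigma,\psi)\ge\left(\frac{1-\tau}{1+\tau}\right)^2\dim_{\rm H}\Sigma$.

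For the upper bound, I would produce an efficient cover of $H_{ea}(\sigma,\psi)$. The point is that membership in $\bigcap_{N\ge m}\bigcup_{n=1}^N\sigma^{-n}E_N$ forces, for every large $N$, a long run of zeros (length $\approx\tau N$) beginning somewhere in the first $N$ symbols; iterating this constraint along a well-chosen subsequence of scales $N_k$ limits the number of admissible cylinders of a given depth. Counting these cylinders against their diameters and optimizing the relation between consecutive scales $N_{k+1}/N_k$ yields the exponent $\left(\frac{1-\tau}{1+\tau}\right)^2\dim_{\rm H}\Sigma$; the optimization is exactly the one that produces the squared ratio, and the topological entropy $h(\Sigma)=\log A$ enters through the number of admissible words of a given length, with specification ensuring this count is comparable to $A^{(\cdot)}$ up to subexponential factors.

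The main obstacle I expect is the upper bound bookkeeping: one must simultaneously track (a) the unknown starting position $n\le N$ of the forced zero-run at each scale — there are only polynomially many choices, which is harmless but must be accounted for — and (b) the interaction between consecutive scales, since a zero-run prescribed at scale $N_k$ may overlap or be reused at scale $N_{k+1}$. Choosing the subsequence $N_k$ to grow geometrically (with the ratio tuned to the value $\tau$) decouples the scales enough to make the cover count clean, and the resulting geometric series produces precisely the exponent $\left(\frac{1-\tau}{1+\tau}\right)^2$. Establishing that the specification constant and the distortion constants in the Gibbs property do not affect the exponent (only the multiplicative constants) is routine but needs to be stated carefully.
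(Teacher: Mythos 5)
Your proposal follows essentially the same route as the paper: the paper reformulates membership in $H_{ea}(\sigma,\psi)$ via the maximal zero-run quantity $L_{N}(\omega)$, establishes the dimension of the level sets $\mathcal{U}_{\psi_{0}}(a,b)$ by exactly the Cantor-construction-plus-mass-distribution lower bound and cylinder-counting upper bound you describe (with specification used both to glue admissible words and to keep the word counts comparable to $e^{nh(\Sigma)}$), and then extracts $\left(\frac{1-\tau}{1+\tau}\right)^{2}$ by optimizing over the ratio of consecutive scales, which is your step as well. One small caution: in the optimal construction the prescribed zero-block starting at position $n_{k}$ must have length about $\frac{2\tau}{1-\tau}n_{k}$ (so the hitting condition survives for every intermediate $N$ up to the next scale, not merely along the subsequence), rather than $\tau N_{k}$ as written; the optimization over $N_{k+1}/N_{k}$ that you defer to is precisely what forces this and produces the squared exponent.
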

\begin{remark}
It is known that $$\dim_{\rm H}\Sigma=\frac{h(\Sigma)}{\log m}$$ for any subshift $\Sigma$ (for example, see $\rm{\cite[Proposition\ \Rmnum{3}.1]{HF1967}}$).
Thus the dimension formula for Theorem $\ref{110}$ can be reformulated as $$\dim_{\rm H}H_{ea}(\sigma,\psi)=\left(\frac{1-\tau}{1+\tau}\right)^{2}\frac{h(\Sigma)}{\log m}.$$
\end{remark}
By Theorem $\ref{110}$, we immediately obtain the following corollary.
\begin{corollary}\label{121}
Assume that $\Sigma$ has specification, $h(\Sigma)>0$ and $0^{\infty}\in\Sigma$.
For any $\psi$ with $\limsup\limits_{N\to\infty}\frac{-\log_{m}\psi(N)}{\log_{A}N}<+\infty$, we have $\dim_{\rm H}H_{ea}(\sigma,\psi)=\dim_{\rm H}\Sigma$.
\end{corollary}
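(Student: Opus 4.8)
The plan is to read the statement off directly from Theorem~\ref{110}(\rmnum{2}). The key observation is that the hypothesis $\limsup_{N\to\infty}\frac{-\log_{m}\psi(N)}{\log_{A}N}<+\infty$ is far stronger than merely assuming that the limit $\tau:=\lim_{N\to\infty}\frac{-\log_{m}\psi(N)}{N}$ exists: it forces $\tau$ to exist and to equal $0$. First I would note that, since $\lim_{x\to\infty}\psi(x)=0$, the quantity $-\log_{m}\psi(N)$ is positive for all large $N$, so for such $N$ we may factor
\begin{equation*}
\frac{-\log_{m}\psi(N)}{N}=\frac{-\log_{m}\psi(N)}{\log_{A}N}\cdot\frac{\log_{A}N}{N}.
\end{equation*}
By hypothesis the first factor is bounded and nonnegative for $N$ large, while the second factor tends to $0$, because $A=e^{h(\Sigma)}>1$ (this is where $h(\Sigma)>0$ enters), so that $\log_{A}N=\frac{\log N}{h(\Sigma)}$ grows only logarithmically in $N$. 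Hence the product tends to $0$, i.e.\ the limit $\tau$ exists and $\tau=0$.

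Having established $0\le\tau=0\le1$, I would then invoke Theorem~\ref{110}(\rmnum{2}) to conclude
\begin{equation*}
\dim_{\rm H}H_{ea}(\sigma,\psi)=\left(\frac{1-\tau}{1+\tau}\right)^{2}\dim_{\rm H}\Sigma=\dim_{\rm H}\Sigma.
\end{equation*}
There is no substantial obstacle once Theorem~\ref{110} is in hand --- all the real content sits in that theorem --- and the only point deserving a word of care is that $\psi(N)$ may exceed $1$ for finitely many small $N$, making $-\log_{m}\psi(N)$ momentarily negative; but this affects neither the $\limsup$ appearing in the hypothesis nor the limit defining $\tau$, since both depend only on the tail behaviour of $\psi$.
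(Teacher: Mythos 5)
Your proof is correct and takes essentially the same approach as the paper, which derives Corollary \ref{121} directly from Theorem \ref{110} (stating only that it follows ``immediately''). Your explicit verification that the hypothesis forces $\tau=\lim_{N\to\infty}\frac{-\log_{m}\psi(N)}{N}$ to exist and equal $0$ --- via the factorization through $\log_{A}N$ and the fact that $A=e^{h(\Sigma)}>1$ --- is precisely the unstated bridge the paper relies on.
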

Moreover, we can establish stronger results, which deduce Theorem $\ref{19}$ and $\ref{110}$ directly due to the quantity $L_{N}(\omega)$ describing
the longest string of zeros. Given $\omega\in\Sigma$, $n\geq1$ and $N\geq1$, define $$l_{n}(\omega)=\sup\{k\geq0:\omega_{n+i}=0,\forall\ 1\leq i\leq k\}$$ and
$$L_{N}(\omega)=\max\limits_{1\leq n\leq N}l_{n}(\omega).$$ The quantity $l_{n}(\omega)$ is the maximal length of string of zeros of the prefix of $\sigma^{n}\omega$,
which was introduced by Li and Wu $\cite{LW2008}$. Li, Persson, Wang and Wu $\cite{LPWW2014}$ gave a kind of classification of $\beta>1$ according to $\{l_{n}\}_{n\geq1}$
and calculated the Lebesgue measure and Hausdorff dimension of each class. Fang, Wu and Li $\cite{FWL2020}$ studied the approximation orders of real numbers by
$\beta$-expansion in terms of investigating the behaviour of $l_{n}(\omega)$. We introduce a new quantity $L_{N}(\omega)$ describing the maximal length of
string of zeros of the prefix among $\{\sigma\omega,\sigma^{2}\omega,\cdots,\sigma^{N}\omega\}$. Recall that
\begin{equation*}
\begin{aligned}
H_{ea}(\sigma,\psi)&=\{\omega\in\Sigma:\forall N\gg1, 1\leq\exists n\leq N,\ {\rm such\ that}\ \sigma^{n}\omega\in E_{N}\}\\
&=\left\{\omega\in\Sigma:\forall N\gg1, \min_{1\leq n\leq N}d(\sigma^{n}\omega,0^{\infty})<\psi(N)\right\}\\
&=\left\{\omega\in\Sigma:\forall N\gg1, L_{N}(\omega)>-\log_{m}\psi(N)-1\right\}.
\end{aligned}
\end{equation*}
We have
\begin{equation}\label{18}
\left\{\omega\in\Sigma:\liminf_{N\to\infty}\frac{L_{N}(\omega)}{-\log_{m}\psi(N)}>1\right\}\subset H_{ea}(\sigma,\psi)\subset\left\{\omega\in\Sigma:\liminf_{N\to\infty}\frac{L_{N}(\omega)}{-\log_{m}\psi(N)}\geq1\right\}.
\end{equation}
So it suffices to consider $\liminf\limits_{N\to\infty}\frac{L_{N}(\omega)}{-\log_{m}\psi(N)}$ for the study of the set $H_{ea}(\sigma,\psi)$. The typical result for the limit of $L_{N}(\omega)$ is the following.
\begin{theorem}\label{16}
Suppose that $\Sigma$ has specification, $h(\Sigma)>0$ and $0^{\infty}\in\Sigma$. For $\mu$-almost all $\omega\in\Sigma$, we have
\begin{equation*}
\lim_{N\to\infty}\frac{L_{N}(\omega)}{\log_{A}N}=1.
\end{equation*}
\end{theorem}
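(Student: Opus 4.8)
The plan is to establish, for $\mu$-almost every $\omega\in\Sigma$, the two estimates $\limsup_{N\to\infty}\frac{L_N(\omega)}{\log_A N}\le 1$ and $\liminf_{N\to\infty}\frac{L_N(\omega)}{\log_A N}\ge 1$ separately, each by a Borel--Cantelli argument carried out along the geometric subsequence $N_j=2^j$ and then transferred to all $N$ via the monotonicity $L_N(\omega)\le L_{N+1}(\omega)$. The only analytic input needed is the Gibbs property of $\mu$ (Section~\ref{20}): there is $C\ge 1$ such that $C^{-1}A^{-n}\le\mu([w_1\cdots w_n])\le CA^{-n}$ for every admissible word $w_1\cdots w_n$, so in particular $\mu([0^k])\asymp A^{-k}$; since $h(\Sigma)>0$ we have $A>1$ and these quantities decay geometrically, and since $0^\infty\in\Sigma$ each word $0^k$ is admissible. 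Throughout, observe that $\{\omega:l_n(\omega)\ge k\}=\sigma^{-n}[0^k]$ and $\{\omega:L_N(\omega)\ge k\}=\bigcup_{n=1}^N\sigma^{-n}[0^k]$.

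For the upper bound, fix $\varepsilon>0$ and set $k_N=\lceil(1+\varepsilon)\log_A N\rceil$. Using $\sigma$-invariance of $\mu$ and the Gibbs bound, $\mu(L_N\ge k_N)\le N\mu([0^{k_N}])\le CNA^{-k_N}\le CN^{-\varepsilon}$. Along $N_j=2^j$ this is summable, so by the Borel--Cantelli lemma $L_{N_j}(\omega)<k_{N_j}$ for all large $j$ and $\mu$-a.e.\ $\omega$; interpolating via $L_N\le L_{N_{j+1}}$ for $N_j\le N<N_{j+1}$ (and $N_{j+1}=2N_j\le 2N$) gives $\limsup_N L_N(\omega)/\log_A N\le 1+\varepsilon$, and letting $\varepsilon\downarrow 0$ through a countable set yields the upper bound.

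The lower bound needs a quasi-independence estimate for suitably separated ``block events'', and this is the step I expect to be the main obstacle. Fix $\varepsilon\in(0,1)$, let $p$ be the specification constant, and set $k=k_{N_j}:=\lfloor(1-\varepsilon)\log_A N_j\rfloor$, $a_i=(i-1)(p+k)$ and $M_j=\lfloor N_j/(p+k)\rfloor$; for $1\le i\le M_j$ let $D_i$ be the event that some coordinate in $\{a_i+1,\dots,a_i+p+1\}$ begins a run of $k$ consecutive zeros. Inserting the gap of length $p$ is what makes the argument run: given any admissible prefix $u$ of length $a_i$, specification supplies a word $v$ with $|v|\le p$ so that $uv0^k$ is admissible, whence the Gibbs bounds give $\mu([u]\cap D_i)\ge\mu([uv0^k])\ge c'A^{-k}\mu([u])$ with $c'=c'(C,p)$ independent of $i,j$; thus $\mu(D_i\mid\mathcal{F}_{a_i})\ge c'A^{-k}$, where $\mathcal{F}_{a_i}$ is generated by the first $a_i$ coordinates. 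Since $D_i$ depends only on coordinates in $\{a_i+1,\dots,a_i+p+k\}$, it is $\mathcal{F}_{a_{i+1}}$-measurable, so conditioning successively on $\mathcal{F}_{a_{M_j}},\mathcal{F}_{a_{M_j-1}},\dots$ gives $\mu\bigl(\bigcap_{i=1}^{M_j}D_i^{c}\bigr)\le(1-c'A^{-k})^{M_j}\le\exp(-c'M_jA^{-k})$. A direct computation gives $M_jA^{-k_{N_j}}\ge cN_j^{\varepsilon}/\log N_j\to\infty$, so this bound is summable in $j$. Finally, if some $D_i$ holds then the run it exhibits starts at a coordinate $\le a_i+p+1\le N_j$, so $L_{N_j}(\omega)\ge k$; hence $\{L_{N_j}<k_{N_j}\}\subset\bigcap_{i=1}^{M_j}D_i^{c}$, and the Borel--Cantelli lemma yields $L_{N_j}(\omega)\ge k_{N_j}$ for all large $j$ and $\mu$-a.e.\ $\omega$. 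Interpolating exactly as in the upper bound gives $\liminf_N L_N(\omega)/\log_A N\ge 1-\varepsilon$, and letting $\varepsilon\downarrow 0$ completes the proof.
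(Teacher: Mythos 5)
Your argument is correct, and the lower bound is proved by a genuinely different route than the paper's. For the upper bound both you and the paper run the same first Borel--Cantelli argument off the Gibbs bound $\mu([0^k])\asymp A^{-k}$ (the paper applies it to the per-$n$ events $\{l_n(\omega)>(1+\epsilon)\log_A n\}$ rather than to $L_{N_j}$ along a geometric subsequence, but this is cosmetic). For the lower bound the paper does not argue directly: it first establishes exponential mixing of $(\Sigma,\sigma,\mu)$ via the spectral gap of the normalized Ruelle operator (Lemma \ref{24}, quoting \cite{PP1990}), feeds this into the quantitative Borel--Cantelli counting theorem of \cite{LLVZ2023} to get the asymptotics $R(N,\omega)=F(N)+O(F^{1/2}(\log F)^{3/2+\epsilon})$ for the number of $n\le N$ with $\sigma^n\omega\in I_{\lfloor\log_A n\rfloor+1}(0^\infty)$, and then deduces that some such $n$ lies in $[N^{1-\epsilon},N]$. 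You instead give a self-contained blocking/quasi-independence argument: specification plus the Gibbs property yield the uniform conditional bound $\mu(D_i\mid\mathcal{F}_{a_i})\ge c'A^{-k}$, and successive conditioning gives the exponential bound $\mu(\bigcap_i D_i^c)\le\exp(-c'M_jA^{-k})$, which is summable along $N_j=2^j$. Your route is more elementary (no transfer-operator spectral theory, no external counting theorem) and uses specification exactly where it is needed; the paper's route is heavier but yields more, namely the precise asymptotic count of hitting times, not just the existence of one hit per window. The only blemishes in your write-up are index bookkeeping (the block length should be taken as $p+k+1$ or the meaning of ``begins a run'' pinned down so that $D_i$ really is $\mathcal{F}_{a_{i+1}}$-measurable and forces $l_n(\omega)\ge k$ for some $1\le n\le N_j$, matching the paper's convention $l_n(\omega)=\sup\{k:\omega_{n+1}=\cdots=\omega_{n+k}=0\}$); these are trivial to repair and do not affect the argument.
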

Theorem $\ref{16}$ tells us that for $\mu$-almost all $\omega\in\Sigma$, $L_{N}(\omega)$ increases to infinity with the speed $\log_{A}N$ as $N$ goes to infinity.
Generally, for any $a\geq0$, let
\begin{equation*}
\mathcal{U}_{\psi}(a):=\left\{\omega\in\Sigma:\liminf_{N\to\infty}\frac{L_{N}(\omega)}{-\log_{m}\psi(N)}\geq a\right\}.
\end{equation*}
By $\eqref{18}$, the study of the Hausdorff dimension of $H_{ea}(\sigma,\psi)$ can be transformed into the the study of the Hausdorff dimension of $\mathcal{U}_{\psi}(a)$.
We choose $\psi_{0}$ with $\psi_{0}(N)=m^{-N}$, that is, $-\log_{m}\psi_{0}(N)=N$. So
\begin{equation*}
\mathcal{U}_{\psi_{0}}(a)=\left\{\omega\in\Sigma:\liminf_{N\to\infty}\frac{L_{N}(\omega)}{N}\geq a\right\}.
\end{equation*}
The Hausdorff dimension of $\mathcal{U}_{\psi_{0}}(a)$ is given in Corollary $\ref{17}$ below. Indeed, we establish the stronger results. That is, let $0\leq a\leq b$, we can further study the level sets
\begin{equation*}
\mathcal{U}_{\psi}(a,b):=\left\{\omega\in\Sigma:\liminf_{N\to\infty}\frac{L_{N}(\omega)}{-\log_{m}\psi(N)}\geq a,\limsup_{N\to\infty}\frac{L_{N}(\omega)}{-\log_{m}\psi(N)}=b\right\}
\end{equation*}
and
\begin{equation*}
\mathcal{U}^{\ast}_{\psi}(a,b):=\left\{\omega\in\Sigma:\liminf_{N\to\infty}\frac{L_{N}(\omega)}{-\log_{m}\psi(N)}=a ,\limsup_{N\to\infty}\frac{L_{N}(\omega)}{-\log_{m}\psi(N)}=b\right\}.
\end{equation*}
Obviously, $\mathcal{U}^{\ast}_{\psi}(a,b)\subset\mathcal{U}_{\psi}(a,b)$. Theorem $\ref{113}$ describes the Hausdorff dimensions of $\mathcal{U}_{\psi}(a,b)$ and $\mathcal{U}^{\ast}_{\psi}(a,b)$ completely when $\lim\limits_{N\to\infty}\frac{-\log_{m}\psi(N)}{N}$ exists.
\begin{theorem}\label{113}
Assume that $\Sigma$ has specification, $h(\Sigma)>0$ and $0^{\infty}\in\Sigma$. Suppose $\lim\limits_{N\to\infty}\frac{-\log_{m}\psi(N)}{N}$ exists,
denoted by $\tau$ ($\tau$ can be $+\infty$). \\
Case \rmnum{1}\quad $0<\tau<+\infty$ \\
(1) $a\geq\tau^{-1}$
\begin{enumerate}[\textbullet]
\item If $b<+\infty$, then $$\mathcal{U}^{\ast}_{\psi}(a,b)=\mathcal{U}_{\psi}(a,b)=\emptyset.$$
\item If $b=+\infty$, then $$\dim_{\rm H}\mathcal{U}^{\ast}_{\psi}(a,b)=\dim_{\rm H}\mathcal{U}_{\psi}(a,b)=0.$$
\end{enumerate}
(2) $0\leq a<\tau^{-1}$
\begin{enumerate}[\textbullet]
\item If $b<\frac{a}{1-\tau a}$, then $$\mathcal{U}^{\ast}_{\psi}(a,b)=\mathcal{U}_{\psi}(a,b)=\emptyset.$$
\item If $b\geq\frac{a}{1-\tau a}$, then
\begin{equation*}
\dim_{\rm H}\mathcal{U}^{\ast}_{\psi}(a,b)=\dim_{\rm H}\mathcal{U}_{\psi}(a,b)=\frac{b(1-\tau a)-a}{(1+\tau b)(b-a)}\dim_{\rm H}\Sigma.
\end{equation*}
\end{enumerate}
Case \rmnum{2}\quad  $\tau=+\infty$
\begin{enumerate}[\textbullet]
\item If $b>0$, then $$\dim_{\rm H}\mathcal{U}^{\ast}_{\psi}(a,b)=\dim_{\rm H}\mathcal{U}_{\psi}(a,b)=0.$$
\item If $b=0$, then $$\dim_{\rm H}\mathcal{U}^{\ast}_{\psi}(a,b)=\dim_{\rm H}\mathcal{U}_{\psi}(a,b)=\dim_{\rm H}\Sigma.$$
\end{enumerate}
Case \rmnum{3}\quad  $\tau=0$\\ Additionally, suppose $\psi(x)$ is strictly decreasing and $\lim\limits_{N\to\infty}\frac{\log_{m}\psi(N-\log_{m}\psi(N))}{\log_{m}\psi(N)}=1$, then $$\dim_{\rm H}\mathcal{U}^{\ast}_{\psi}(a,b)=\dim_{\rm H}\mathcal{U}_{\psi}(a,b)=\dim_{\rm H}\Sigma.$$
\end{theorem}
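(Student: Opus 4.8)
The plan is to turn the statement into a question about how long runs of zeros of $\omega$ can be placed, to prove the non‑degenerate dimension formula in Case \rmnum{1}(2) (where $0<\tau<+\infty$, $0\le a<\tau^{-1}$, $b\ge\frac{a}{1-\tau a}$) by a covering upper bound and a Moran‑type lower bound, and to deduce all the remaining lines of the theorem from the same combinatorics or from easy limiting constructions. Throughout write $\phi(N)=-\log_m\psi(N)$, so that $\phi(N)/N\to\tau$, and $s^{\ast}=\frac{b(1-\tau a)-a}{(1+\tau b)(b-a)}$, the claimed value of $\dim_{\rm H}\mathcal{U}_\psi(a,b)/\dim_{\rm H}\Sigma$.

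First I would set up the combinatorics. Discard the countable (hence zero‑dimensional) set $\bigcup_{n\ge0}\sigma^{-n}(\{0^{\infty}\})$, on which $L_N\equiv+\infty$ for all large $N$ (so it contributes only to $\mathcal{U}_\psi(a,+\infty)$). Every other $\omega$ has infinitely many maximal runs of zeros, and keeping only the \emph{record} runs we obtain starting positions $q_1<q_2<\cdots$ and lengths $L_1<L_2<\cdots$ with $L_N(\omega)=L_k$ for $q_k-1\le N\le q_{k+1}-2$ and $q_{k+1}\ge q_k+L_k+1$. Then $\liminf_N L_N/\phi(N)\ge a$ reads, up to $o(1)$, $q_{k+1}\le L_k/(\tau a)$; $\limsup_N L_N/\phi(N)\le b$ reads $q_{k+1}\ge L_{k+1}/(\tau b)$; and $\liminf=a$ (resp. $\limsup=b$) forces equality for infinitely many $k$. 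From $q_{k+1}\ge q_k+L_k$ together with $q_{k+1}\le L_k/(\tau a)$ one gets $q_k\le L_k\frac{1-\tau a}{\tau a}$, impossible once $\tau a\ge1$: this gives the emptiness in Case \rmnum{1}(1) for $b<+\infty$, while for $b=+\infty$ a quantitative version of the same inequality (together with the $\limsup$ constraint) forces the records to grow super‑geometrically, a zero‑dimensional condition. Combining the two displayed inequalities gives $L_{k+1}\le\frac ba L_k$ and $q_{k+1}-q_k-L_k\ge L_k\frac{b(1-\tau a)-a}{\tau ab}$, which is nonnegative exactly when $b\ge\frac{a}{1-\tau a}$; this is the emptiness in Case \rmnum{1}(2) for $b<\frac{a}{1-\tau a}$. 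For $\tau=+\infty$ and $b>0$, a record of length $\sim b\phi(N)\gg N$ beginning before $N$ forces, at scale $\sim\phi(N)$, a density of zeros tending to $1$, so the covering bound below gives dimension $0$; for $\tau=+\infty$ and $b=0$ one necessarily has $\liminf=0$, and by Theorem \ref{16} $L_N\sim\log_AN=o(\phi(N))$ for $\mu$‑a.e.\ $\omega$, so $\mathcal{U}_\psi(0,0)$ has full $\mu$‑measure and hence full dimension. For $\tau=0$ the required runs are of sublinear length, and using the regularity hypothesis $\phi(N+\phi(N))/\phi(N)\to1$ to control how $\phi$ drifts along a run one builds a Cantor subset of $\mathcal{U}^{\ast}_\psi(a,b)$ whose set of non‑forced positions has density $1$, giving full dimension.

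For the upper bound in the main case, fix $\varepsilon>0$ and absorb it into $a,b$. Call $P=(q_1,L_1,\dots,q_K,L_K)$ an admissible \emph{profile} if it meets the above constraints, and set $T=q_K+L_K$. Any $\omega\in\mathcal{U}_\psi(a,b)$ whose records up to scale $T$ are described by $P$ lies in a cylinder of length $T$ whose positions in $\bigcup_{k\le K}[q_k,q_k+L_k-1]$ carry the letter $0$; by the Gibbs property of $\mu$ there are at most $CA^{\,f_P}$ such cylinders, with $f_P=T-\sum_{k\le K}L_k$. An elementary optimization over profiles shows $\sum_{k\le K}L_k\ge(1-s^{\ast}-o(1))\,T$, the minimizer being the geometric profile $L_k=L_1(b/a)^{k-1}$, $q_k=L_k/(\tau b)$; hence $f_P\le(s^{\ast}+o(1))\,T$. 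Since there are only subexponentially many profiles with a given $T$ (there are $O(\log T)$ records, each with coordinates at most $T$), the union over $\omega\in\mathcal{U}_\psi(a,b)$ of these cylinders (with $T\ge M$) is a cover whose $s$‑volume $\sum m^{-Ts}$ tends to $0$ as $M\to\infty$ whenever $s>(s^{\ast}+o(1))\frac{h(\Sigma)}{\log m}$. Letting $\varepsilon\to0$ and using $\dim_{\rm H}\Sigma=\frac{h(\Sigma)}{\log m}$ gives $\dim_{\rm H}\mathcal{U}_\psi(a,b)\le s^{\ast}\dim_{\rm H}\Sigma$. (The same argument with $\sum L_k\ge(1-o(1))T$ handles the zero‑dimensional cases.)

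For the lower bound, since $\mathcal{U}^{\ast}_\psi(a,b)\subset\mathcal{U}_\psi(a,b)$ it suffices to exhibit, for each $\varepsilon>0$, a subset of $\mathcal{U}^{\ast}_\psi(a,b)$ of dimension at least $(s^{\ast}-\varepsilon)\dim_{\rm H}\Sigma$. I would use the extremal geometric profile above, for which $L_N/\phi(N)$ oscillates, attaining (up to $o(1)$) the value $b$ at $N=q_k-1$ and the value $a$ at $N=q_{k+1}-2$, so $\liminf=a$, $\limsup=b$ automatically and the gaps $q_{k+1}-q_k-L_k\asymp L_k$ are positive. In each gap I insert an arbitrary word of $\Sigma$ with no run of zeros longer than some $R_k\to\infty$ with $R_k=o(\phi(q_k))$, joining it to the forced $0$‑runs by the specification property (the joining words have bounded length). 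The truncation $R_k=o(\phi(q_k))$ guarantees the inserted runs do not perturb $\liminf$ or $\limsup$, so the Cantor set $\mathcal{C}$ obtained lies in $\mathcal{U}^{\ast}_\psi(a,b)$; since the entropy of the subshift of $\Sigma$ avoiding $0$‑runs longer than $R$ increases to $h(\Sigma)$, one may spread a Gibbs‑type measure $\nu$ on $\mathcal{C}$ with $\nu([\omega_1\cdots\omega_n])\le A^{-(1-o(1))f_n}$, where $f_n$ is the number of non‑forced positions in $[1,n]$. A direct computation with the geometric profile gives $\liminf_n f_n/n=s^{\ast}$, the infimum being attained at the right ends $n=q_k+L_k-1$ of the record runs, so the mass distribution principle yields $\dim_{\rm H}\mathcal{C}\ge(1-o(1))s^{\ast}\dim_{\rm H}\Sigma$; letting $\varepsilon\to0$ finishes the main case, and the boundary value $b=\frac{a}{1-\tau a}$ (where $s^{\ast}=0$ and the gaps degenerate) as well as $b=+\infty$ in Case \rmnum{1}(1) give dimension $0$ directly. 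The main obstacle is exactly this construction: one must simultaneously keep $\liminf$ and $\limsup$ \emph{equal} to $a$ and $b$ (which dictates the run‑truncation and the oscillation analysis) and keep the entropy of the free blocks arbitrarily close to $h(\Sigma)$ through the specification gluings, so that $\nu$ actually produces the exponent $s^{\ast}$; on the upper‑bound side the only non‑routine ingredient is the extremal estimate $\sum_{k\le K}L_k\ge(1-s^{\ast}-o(1))(q_K+L_K)$ over all admissible profiles.
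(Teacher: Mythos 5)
Your proposal is correct and follows essentially the same route as the paper: reduce to record runs of zeros, prove the upper bound by a profile-counting cover together with the extremal estimate $\sum_{k\le K}L_{k}\ge(1-s^{\ast}-o(1))T$, and prove the lower bound by a Moran construction along the geometric profile $L_{k+1}\approx(b/a)L_{k}$ glued by specification and evaluated with the mass distribution principle; the paper's only cosmetic difference in Case \rmnum{1} is that it first rescales to $\psi_{0}(N)=m^{-N}$ via $\mathcal{U}_{\psi}(a,b)=\mathcal{U}_{\psi_{0}}(\tau a,\tau b)$ and proves the core dimension formula there (Lemma $\ref{55}$), while your Case \rmnum{2} argument for $b=0$ via the full $\mu$-measure of $\{\underline{L}=\overline{L}=0\}$ is a slicker variant of the paper's exhaustion by $\mathcal{U}_{\psi_{0}}^{\ast}(a',b')$. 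Two small caveats: your gap inequality should read $q_{k+1}-q_{k}-L_{k}\le L_{k}\frac{b(1-\tau a)-a}{\tau ab}$ (an upper bound whose nonnegativity is what forces $b\ge\frac{a}{1-\tau a}$), and your one-line treatments of Case \rmnum{3} and of the degenerate parameters $a=0$ and $b=+\infty$ compress what the paper handles by several separate explicit recursions for the sequences $\{n_{k}\},\{d_{k}\}$ — though the underlying idea (sublinear forced runs, free positions of density one, the regularity of $\Phi$ controlling the recursion) is exactly the paper's.
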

\begin{remark}
(\rmnum{1}) In (2) of Theorem $\ref{113}$ Case \rmnum{1}, we have $$\dim_{\rm H}\mathcal{U}^{\ast}_{\psi}(a,b)=\dim_{\rm H}\mathcal{U}_{\psi}(a,b)=\frac{b(1-\tau a)-a}{(1+\tau b)(b-a)}\dim_{\rm H}\Sigma$$
for all $b\geq\frac{a}{1-\tau a}$. Note that $b(1-\tau a)-a=(1+\tau b)(b-a)=0$ when $a=b=0$. However, we know that the denominator can not be zero. In fact, in the proof of Theorem $\ref{113}$, we show that
$$\dim_{\rm H}\mathcal{U}^{\ast}_{\psi}(0,0)=\dim_{\rm H}\mathcal{U}_{\psi}(0,0)=\dim_{\rm H}\Sigma.$$ Thus, we stipulate that $\frac{b(1-\tau a)-a}{(1+\tau b)(b-a)}=1$ when $a=b=0$.\\
(\rmnum{2}) More precisely, in (1) of Theorem $\ref{113}$ Case \rmnum{1}, we have
\begin{enumerate}[\textbullet]
\item $a>\tau^{-1}$ and $b=+\infty$ $\Rightarrow$ $\mathcal{U}_{\psi}(a,b)=\bigcup_{n=0}^{\infty}\sigma^{-n}(\{0^{\infty}\})$.
\item $\tau^{-1}<a<+\infty$ $\Rightarrow$ $\mathcal{U}^{\ast}_{\psi}(a,b)=\emptyset$.
\end{enumerate}
In Theorem $\ref{113}$ Case \rmnum{2}, we have
\begin{enumerate}[\textbullet]
\item $a>0$ $\Rightarrow$ $\mathcal{U}_{\psi}(a,b)\subset\bigcup_{n=0}^{\infty}\sigma^{-n}(\{0^{\infty}\})$.
\end{enumerate}
\end{remark}
\begin{corollary}\label{17}
Suppose that $\Sigma$ has specification, $h(\Sigma)>0$ and $0^{\infty}\in\Sigma$. We have\\
(\rmnum{1}) If $a>1$, then
\begin{equation*}
\mathcal{U}_{\psi_{0}}(a)=\bigcup_{n=0}^{\infty}\sigma^{-n}(\{0^{\infty}\}).
\end{equation*}
(\rmnum{2}) If $0\leq a\leq1$, then
\begin{equation}\label{111}
\dim_{\rm H}\mathcal{U}_{\psi_{0}}(a)=\left(\frac{1-a}{1+a}\right)^{2}\dim_{\rm H}\Sigma.
\end{equation}
\end{corollary}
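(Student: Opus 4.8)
The plan is to read everything off Theorem \ref{113} with the choice $\psi=\psi_{0}$. Since $-\log_{m}\psi_{0}(N)=N$ we have $\tau=\lim_{N\to\infty}\frac{-\log_{m}\psi_{0}(N)}{N}=1$, so we are in Case \rmnum{1} of Theorem \ref{113} with $\tau^{-1}=1$, and $\mathcal{U}_{\psi_{0}}(a)=\{\omega:\liminf_{N}L_{N}(\omega)/N\ge a\}$. The only bookkeeping identity needed is that, because $\liminf\le\limsup$ always,
\begin{equation*}
\mathcal{U}_{\psi_{0}}(a)=\bigcup_{b\in[a,+\infty]}\mathcal{U}_{\psi_{0}}(a,b),
\end{equation*}
where for $b=+\infty$ we mean $\{\omega:\liminf_{N}L_{N}(\omega)/N\ge a,\ \limsup_{N}L_{N}(\omega)/N=+\infty\}$.

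For part (\rmnum{1}), suppose $a>1=\tau^{-1}$. Item (1) of Case \rmnum{1} of Theorem \ref{113} gives $\mathcal{U}_{\psi_{0}}(a,b)=\emptyset$ for every finite $b$, and the Remark following Theorem \ref{113} (the case $a>\tau^{-1}$, $b=+\infty$) gives $\mathcal{U}_{\psi_{0}}(a,+\infty)=\bigcup_{n=0}^{\infty}\sigma^{-n}(\{0^{\infty}\})$; feeding these into the decomposition yields $\mathcal{U}_{\psi_{0}}(a)=\bigcup_{n=0}^{\infty}\sigma^{-n}(\{0^{\infty}\})$. At the level of the quantity $L_{N}$ this amounts to the elementary fact that an $\omega\in\Sigma$ which is not eventually $0^{\infty}$ has $\liminf_{N}L_{N}(\omega)/N\le 1$: writing $k_{1}<k_{2}<\cdots$ for the positions of the non‑zero coordinates of $\omega$ and $g_{i}=k_{i+1}-k_{i}$, one has $L_{N}(\omega)=\max_{k_{i}\le N}g_{i}-1$; if $L_{N}\ge(1+\varepsilon)N$ held for all large $N$ one checks that $g_{i}=\max_{j\le i}g_{j}$ and $g_{i}\ge(1+\varepsilon)k_{i}-O(1)$ for all large $i$, and then evaluating at $N=k_{i+1}-1$ gives $L_{N}/N=\frac{g_{i}+O(1)}{k_{i}+g_{i}-1}\to 1$ along that subsequence, a contradiction.

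For part (\rmnum{2}), if $a=1=\tau^{-1}$ then again by item (1) of Case \rmnum{1} we get $\mathcal{U}_{\psi_{0}}(1,b)=\emptyset$ for finite $b$ and $\dim_{\rm H}\mathcal{U}_{\psi_{0}}(1,+\infty)=0$, so $\dim_{\rm H}\mathcal{U}_{\psi_{0}}(1)=0=\big(\frac{1-1}{1+1}\big)^{2}\dim_{\rm H}\Sigma$. Now let $0\le a<1$, so we are in item (2) of Case \rmnum{1}. Set $f(b)=\frac{b(1-a)-a}{(1+b)(b-a)}$ on $[\frac{a}{1-a},+\infty)$, with the paper's conventions $f(0)=1$ (when $a=0$) and $f(+\infty)=\lim_{b\to\infty}f(b)=0$; then $\mathcal{U}_{\psi_{0}}(a,b)=\emptyset$ for $b<\frac{a}{1-a}$ and $\dim_{\rm H}\mathcal{U}_{\psi_{0}}(a,b)=f(b)\dim_{\rm H}\Sigma$ for $b\ge\frac{a}{1-a}$. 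A one‑variable calculation (setting $f'=0$) shows that $f$ attains its maximum over $[\frac{a}{1-a},+\infty]$ at $b^{\ast}=\frac{2a}{1-a}\ge\frac{a}{1-a}$, with value $f(b^{\ast})=\big(\frac{1-a}{1+a}\big)^{2}$. The lower bound is then immediate: $\mathcal{U}_{\psi_{0}}(a,b^{\ast})\subseteq\mathcal{U}_{\psi_{0}}(a)$ gives $\dim_{\rm H}\mathcal{U}_{\psi_{0}}(a)\ge f(b^{\ast})\dim_{\rm H}\Sigma=\big(\frac{1-a}{1+a}\big)^{2}\dim_{\rm H}\Sigma$.

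It remains to prove the matching upper bound $\dim_{\rm H}\mathcal{U}_{\psi_{0}}(a)\le\big(\frac{1-a}{1+a}\big)^{2}\dim_{\rm H}\Sigma$, and this is the one step that is not purely formal: although each piece $\mathcal{U}_{\psi_{0}}(a,b)$ has dimension at most $f(b^{\ast})\dim_{\rm H}\Sigma$, the union over $b$ is uncountable, so one cannot directly invoke countable stability of Hausdorff dimension. I would handle it by writing $\mathcal{U}_{\psi_{0}}(a)=\big(\bigcup_{t\in\mathbb{N}}B_{t}\big)\cup\mathcal{U}_{\psi_{0}}(a,+\infty)$, a countable union, where $B_{t}=\{\omega:\liminf_{N}L_{N}(\omega)/N\ge a,\ \limsup_{N}L_{N}(\omega)/N\le t\}$; the set $\mathcal{U}_{\psi_{0}}(a,+\infty)$ has dimension $f(+\infty)\dim_{\rm H}\Sigma=0$, so it suffices to show $\dim_{\rm H}B_{t}\le f(b^{\ast})\dim_{\rm H}\Sigma$ for each fixed $t$. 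For this I would re‑run the covering argument that establishes the upper bound in Theorem \ref{113}: that argument only uses, at a sparse sequence of scales, that $L_{N}(\omega)\ge(a-\varepsilon)N$ at every large scale together with an upper bound $L_{N}(\omega)\le(b+\varepsilon)N$ on the length of the longest forced block of zeros — precisely the conditions defining $B_{t}$ with $b=t$ — and the resulting Hausdorff‑content estimate, optimized over the growth exponent $b$ of that block, is $\max_{\frac{a}{1-a}\le b\le t}f(b)\,\dim_{\rm H}\Sigma\le f(b^{\ast})\dim_{\rm H}\Sigma$. Combining with countable stability yields $\dim_{\rm H}\mathcal{U}_{\psi_{0}}(a)\le\big(\frac{1-a}{1+a}\big)^{2}\dim_{\rm H}\Sigma$, which together with the lower bound is \eqref{111}. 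The main obstacle, as indicated, is exactly this passage from the exact level sets $\mathcal{U}_{\psi}(a,b)$ of Theorem \ref{113} to the truncated sets $B_{t}$ and thence to the uncountable union; everything else is an application of Theorem \ref{113} and the elementary maximization of $f$.
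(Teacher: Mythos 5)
Your proposal is correct and follows essentially the same route as the paper: the lower bound comes from the single level set $\mathcal{U}_{\psi_{0}}(a,b^{\ast})$ with $b^{\ast}=\frac{2a}{1-a}$, and the upper bound from reducing the uncountable union over $\limsup$-values to a countable one together with a $\varrho$-perturbed dimension estimate for the windows $\{\omega\in\Sigma:\underline{L}(\omega)\geq a,\ b\leq\overline{L}(\omega)\leq b+\varrho\}$. The only difference is organizational: the estimate you propose to re-derive for your truncated sets $B_{t}$ is already recorded as the second assertion of Lemma \ref{55} (\rmnum{2}), namely inequality \eqref{419}, which the paper applies directly over rational $b\geq\frac{a}{1-a}$ and then lets $\varrho\to0$, so no re-running of the covering argument is actually needed.
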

The remainder of the paper is organized as follows. In Section $\ref{10}$,
we give the proof of Theorem $\ref{19}$ (Theorem $\ref{110}$) modulo Theorem $\ref{16}$ (Corollary $\ref{17}$) respectively. In Section $\ref{20}$, we introduce some preliminary concepts, including symbolic dynamics and Ruelle operator.
The proof of Theorem $\ref{16}$ is given in Section $\ref{30}$. In Section $\ref{40}$, we calculate the Hausdorff dimensions
of $\mathcal{U}_{\psi_{0}}(a,b)$ and $\mathcal{U}_{\psi_{0}}^{\ast}(a,b)$. Section $\ref{50}$ is dedicated to the proof
of Theorem $\ref{113}$ and Corollary $\ref{17}$.
\section{Establishing Theorems $\ref{19}$ and $\ref{110}$}\label{10}
\begin{proof}[\rm \textbf{Proof of Theorem $\ref{19}$ modulo Theorem $\ref{16}$}]
By $\eqref{18}$, it suffices to show that
\begin{equation}\label{114}
\mu\left(\left\{\omega\in\Sigma:\liminf_{N\to\infty}\frac{L_{N}(\omega)}{-\log_{m}\psi(N)}\geq1\right\}\right)=0\ {\rm if}\ \limsup\limits_{N\to\infty}\frac{-\log_{m}\psi(N)}{\log_{A}N}>1
\end{equation}
and
\begin{equation}\label{115}
\mu\left(\left\{\omega\in\Sigma:\liminf_{N\to\infty}\frac{L_{N}(\omega)}{-\log_{m}\psi(N)}>1\right\}\right)=1\ {\rm if}\ \limsup\limits_{N\to\infty}\frac{-\log_{m}\psi(N)}{\log_{A}N}<1.
\end{equation}
Denote $\Gamma=\left\{\omega\in\Sigma:\lim\limits_{N\to\infty}\frac{L_{N}(\omega)}{\log_{A}N}=1\right\}$. Note that for any $\omega\in\Gamma$, we have
\begin{equation}\label{116}
\begin{aligned}
\liminf_{N\to\infty}\frac{L_{N}(\omega)}{-\log_{m}\psi(N)}&=\liminf_{N\to\infty}\frac{L_{N}(\omega)}{\log_{A}N}\cdot\frac{\log_{A}N}{-\log_{m}\psi(N)}\\
&=\liminf_{N\to\infty}\frac{\log_{A}N}{-\log_{m}\psi(N)}\\ &=\left(\limsup\limits_{N\to\infty}\frac{-\log_{m}\psi(N)}{\log_{A}N}\right)^{-1}.
\end{aligned}
\end{equation}
The combination of Theorem $\ref{16}$ and $\eqref{116}$ gives $\eqref{114}$ and $\eqref{115}$.
\textrm{}
\end{proof}

\begin{proof}[\rm \textbf{Proof of Theorem $\ref{110}$ modulo Corollary $\ref{17}$}]
We use two important facts. \\ Fact 1: When $\tau>0$, for any $\omega\in\mathcal{A}^{\mathbb{N}}$ with $\liminf\limits_{N\to\infty}\frac{L_{N}(\omega)}{-\log_{m}\psi(N)}>0$, we have
\begin{equation}\label{117}
\liminf_{N\to\infty}\frac{L_{N}(\omega)}{N}=\liminf_{N\to\infty}\frac{L_{N}(\omega)}{-\log_{m}\psi(N)}\cdot\frac{-\log_{m}\psi(N)}{N}=\tau\liminf_{N\to\infty}\frac{L_{N}(\omega)}{-\log_{m}\psi(N)}
\end{equation}
Furthermore, $\eqref{117}$ holds for $0<\tau<+\infty$ and any $\omega\in\mathcal{A}^{\mathbb{N}}$.\\
Fact 2: When $\tau=0$, for any $\omega\in\mathcal{A}^{\mathbb{N}}$ with $\liminf\limits_{N\to\infty}\frac{L_{N}(\omega)}{N}>0$, we have
\begin{equation}\label{119}
\liminf\limits_{N\to\infty}\frac{L_{N}(\omega)}{-\log_{m}\psi(N)}=\liminf_{N\to\infty}\frac{L_{N}(\omega)}{N}\cdot\frac{N}{-\log_{m}\psi(N)}=+\infty.
\end{equation}
(\rmnum{1}) If $\tau>1$, by Corollary $\ref{17}$ (\rmnum{1}), we have
\begin{equation}\label{118}
\mathcal{U}_{\psi_{0}}(\tau)=\bigcup_{n=0}^{\infty}\sigma^{-n}(\{0^{\infty}\}).
\end{equation}
The upshot of $\eqref{18}$, $\eqref{117}$ and $\eqref{118}$ is that $H_{ea}(\sigma,\psi)\subset\mathcal{U}_{\psi_{0}}(\tau)=\bigcup_{n=0}^{\infty}\sigma^{-n}(\{0^{\infty}\})$.
On the other hand, for any $\omega\in\bigcup_{n=0}^{\infty}\sigma^{-n}(\{0^{\infty}\})$, there exists $n_{0}\geq0$, such that $\sigma^{n_{0}}\omega=0^{\infty}$.
Thus, $L_{N}(\omega)=+\infty$ for every $N\geq n_{0}$, which implies that $\omega\in H_{ea}(\sigma,\psi)$. Thus,
$\bigcup_{n=0}^{\infty}\sigma^{-n}(\{0^{\infty}\})\subset H_{ea}(\sigma,\psi)$. Therefore,
\begin{equation*}
H_{ea}(\sigma,\psi)=\bigcup_{n=0}^{\infty}\sigma^{-n}(\{0^{\infty}\}).
\end{equation*}
(\rmnum{2}) If $0<\tau\leq1$, the upshot of $\eqref{18}$ and $\eqref{117}$ is that
\begin{equation}\label{120}
\mathcal{U}_{\psi_{0}}(\tau(1+\epsilon))\subset H_{ea}(\sigma,\psi)\subset\mathcal{U}_{\psi_{0}}(\tau)
\end{equation}
for any $\epsilon>0$. When $\tau=1$, in view of $\eqref{120}$ and Corollary $\ref{17}$ (\rmnum{2}), we obtain that $\dim_{\rm H}H_{ea}(\sigma,\psi)=0$.
When $0<\tau<1$, for any $\epsilon\in(0,\tau^{-1}-1)$, the combination of $\eqref{120}$ and Corollary $\ref{17}$ (\rmnum{2}) gives
\begin{equation*}
\left(\frac{1-\tau(1+\epsilon)}{1+\tau(1+\epsilon)}\right)^{2}\dim_{\rm H}\Sigma\leq\dim_{\rm H}H_{ea} (\sigma,\psi)\leq\left(\frac{1-\tau}{1+\tau}\right)^{2}\dim_{\rm H}\Sigma.
\end{equation*}
Letting $\epsilon\to0$, we have $$\dim_{\rm H}H_{ea}(\sigma,\psi)=\left(\frac{1-\tau}{1+\tau}\right)^{2}\dim_{\rm H}\Sigma.$$
The remaining case is $\tau=0$, by $\eqref{18}$ and $\eqref{118}$, we have $\mathcal{U}_{\psi_{0}}(a)\subset H_{ea}(\sigma,\psi)$ for all $0<a<1$.
It follows from Corollary $\ref{17}$ (\rmnum{2}) that $\dim_{\rm H}H_{ea}(\sigma,\psi)\geq\left(\frac{1-a}{1+a}\right)^{2}\dim_{\rm H}\Sigma$ for all $0<a<1$,
which implies that $\dim_{\rm H}H_{ea}(\sigma,\psi)\geq\dim_{\rm H}\Sigma$. Thus $$\dim_{\rm H}H_{ea}(\sigma,\psi)=\dim_{\rm H}\Sigma.$$
\textrm{}
\end{proof}
\section{Preliminary}\label{20}
\subsection{Symbolic dynamics}
Let $(\mathcal{A}^{\mathbb{N}},d,\sigma)$ be the symbolic space. For any $n\geq1$, define
\begin{equation*}
\mathcal{A}^{n}=\{\omega_{1}\cdots\omega_{n}:\omega_{i}\in\mathcal{A}, \forall\ 1\leq i\leq n \}.
\end{equation*}
The element in $\mathcal{A}^{\mathbb{N}}$ and $\mathcal{A}^{n}$ is called the infinite word and the word of length $n$, respectively.
We also denote $\mathcal{A}^{\ast}$ by the collection of all finite words, i.e. $\mathcal{A}^{\ast}=\cup_{n\geq1}\mathcal{A}^{n}$.
For any $\omega,u\in\mathcal{A}^{\ast}$, we write $\omega u$ as the concatenation of words $\omega$ and $u$.
In particular, $\omega^{n}$ denotes the $n$ times self-concatenation of $\omega$ and $\omega^{\infty}$ stands for the infinite times self-concatenation of $\omega$.
The lexicographical order $\prec$ on $\mathcal{A}^{\mathbb{N}}$ is defined as follows:
\begin{equation*}
\varepsilon_{1}\varepsilon_{2}\cdots\varepsilon_{n}\cdots\prec\omega_{1}\omega_{2}\cdots\omega_{n}\cdots
\end{equation*}
if there exists an integer $n\geq1$ such that $\varepsilon_{k}=\omega_{k}$ for all $1\leq k<n$ but $\varepsilon_{n}<\omega_{n}$.
Similarly, for each $n\in\mathbb{N}$, the lexicographical order $\prec$ on $\mathcal{A}^{n}$ is defined as follows:
\begin{equation*}
\varepsilon_{1}\varepsilon_{2}\cdots\varepsilon_{n}\prec\omega_{1}\omega_{2}\cdots\omega_{n}
\end{equation*}
if there exists an integer $i\leq n$ such that $\varepsilon_{k}=\omega_{k}$ for all $1\leq k<i$ but $\varepsilon_{i}<\omega_{i}$.
Given $u\in\mathcal{A}^{\mathbb{N}}$ and positive integers $i\leq j$, let $u_{[i,j]}=u_{i}u_{i+1}\cdots u_{j}$. Let $\Sigma\subset\mathcal{A}^{\mathbb{N}}$ be a subshift. For each $n\geq1$, define
\begin{equation*}
\Sigma^{n}=\{\omega_{1}\cdots\omega_{n}\in\mathcal{A}^{n}: \exists\ u\in\Sigma, {\rm such\ that}\ u_{[1,n]}=\omega_{1}\cdots\omega_{n}\}
\end{equation*}
which is the set of all finite word of length $n$ appears in some elements of $\Sigma$.
Similar with $\mathcal{A}^{\ast}$, we denote $\Sigma^{\ast}$ the collection of all finite words in $\Sigma$, that is, $\Sigma^{\ast}=\cup_{n\geq1}\Sigma^{n}$.
For $u\in\Sigma\cup\Sigma^{\ast}$ and $n\in\mathbb{N}$, denote $I_{n}(u)$ the set of all $\omega\in\Sigma$ such that $\omega_{[1,n]}=u_{1}\cdots u_{n}$. We call $I_{n}(u)$ the $n$-th cylinder containing $u$. Furthermore, for any $\omega\in\Sigma$ and $r>0$, we denote $B(\omega,r):=\{u\in\Sigma:d(u,\omega)<r\}$, which is the ball with center $\omega$ and radius $r$. Let $\sharp$ denote the cardinality of a finite set. Note that
\begin{equation}\label{11}
\sharp\Sigma^{i+j}\leq\sharp\Sigma^{i}\cdot\sharp\Sigma^{j}
\end{equation}
for all $i,j\in\mathbb{N}$, which is a simple but important property.

We introduce two important definitions, which appears many times in our paper. The first definition is the topological entropy, which can be found in $\cite[\rm Chapter\ 7]{PW1982}$.
\begin{definition}
The topological entropy of a subshift $\Sigma$ is
\begin{equation}\label{12}
h(\Sigma):=\lim_{n\to\infty}\frac{\log\sharp\Sigma^{n}}{n}=\inf_{n\to\infty}\frac{\log\sharp\Sigma^{n}}{n},
\end{equation}
where existence of the limit and the second equality follow from $\eqref{11}$ and a standard lemma regarding subadditive sequences.
\end{definition}
In fact, the above definition is not the original one $\cite[\rm Definition\ 7.6]{PW1982}$, but in view of $\cite[\rm Theorem\ 7.13]{PW1982}$, for subshift, the above definition is equivalent to the original one. The below definition of specification is specialised for the symbolic setting and is slightly weaker than Bowen's original one $\cite{RB1974}$.
\begin{definition}
A subshift $\Sigma$ has the specification property or simply has specification if there exists $M\in\mathbb{N}$ such that for any $u,v\in\Sigma^{\ast}$, there exists $\omega\in\Sigma^{M}$ such that $u\omega v\in\Sigma^{\ast}$.
\end{definition}
There are many examples of symbolic dynamics with specification, we refer the reader to $\cite{BRA2014,BM1986}$.

\subsection{Ruelle operator}
An important tool to find $\sigma$-invariant measure on $\Sigma$ is Ruelle operator.
Denote by $\mathcal{C}=\mathcal{C}(\Sigma,\mathbb{R})$ the space of all continuous functions $\phi:\Sigma\to\mathbb{R}$ with the supremum norm $\|\phi\|:=\max\limits_{\omega\in\Sigma}|\phi(\omega)|$.
We define a linear operator $\mathcal{L}$ as
\begin{equation}\label{112}
\mathcal{L}\phi(\omega):=\sum_{u:\sigma u=\omega}\phi(u),\quad \phi\in\mathcal{C}.
\end{equation}
The above operator is called Ruelle operator. Here we choose a special potential, that is, $\varphi(\omega)=1$, $\forall\ \omega\in\Sigma$.
Note that $d(\sigma u,\sigma v)=m\cdot d(u,v)$ for any $u,v\in\Sigma$ with $d(u,v)\leq m^{-2}$.
In view of $\cite[\rm Proposition\ 1]{FJ2001}$, there exists $0<\kappa\leq m^{-2}$, such that $\sharp\sigma^{-1}(u)=\sharp\sigma^{-1}(v)$ for all $u,v\in\Sigma$ with $d(u,v)\leq\kappa$.
Furthermore, we can arrange $\sigma^{-1}(u)=\{x_{1},\cdots,x_{n}\}$ and $\sigma^{-1}(v)=\{y_{1},\cdots,y_{n}\}$ such that $d(x_{i},y_{i})\leq m^{-1}\cdot d(u,v)$ for all $1\leq i\leq n$. Hence, we have $\mathcal{L}\phi\in\mathcal{C}$ for every $\phi\in\mathcal{C}$. Let $\mathcal{M}$ be the dual space of $\mathcal{C}$, by the Riesz representation theorem, $\mathcal{M}$ is actually the space of all Borel measures on $\Sigma$. Let $\mathcal{L}^{\ast}:\mathcal{M}\to\mathcal{M}$ be the dual operator of $\mathcal{L}:\mathcal{C}\to\mathcal{C}$. That is, for any $\phi\in\mathcal{C}$ and any $\nu\in\mathcal{M}$,
\begin{equation*}
\int\phi\mathrm{d}\mathcal{L}^{\ast}\nu=\int \mathcal{L}\phi\mathrm{d}\nu.
\end{equation*}
For simplicity, we denote $\int\phi\mathrm{d}\nu$ by $\langle\nu,\phi\rangle$ for each $\phi\in\mathcal{C}$ and $\nu\in\mathcal{M}$. Due to the result of Fan and Jiang \cite[\rm Theorem 1]{FJ2001}, there are a strictly positive number $\rho$ and a strictly positive function $h$ such that $\mathcal{L}h=\rho h$. Furthermore, there exists a probability measure $\nu$ satsifying $\mathcal{L}^{\ast}\nu=\rho\nu$. Choose $h$ such that $\langle\nu,h\rangle=1$. Let $\mu:=h\nu$, i.e. $\frac{\mathrm{d}\mu}{\mathrm{d}\nu}=h$. The measure $\mu$ has the Gibbs property: there exists a constant $\gamma>1$ such that
\begin{equation}\label{15}
\gamma^{-1}\rho^{-n}\leq\mu(I_{n}(\omega))\leq\gamma\rho^{-n}
\end{equation}
holds for all $\omega\in\Sigma$ and $n\geq1$. What is more, by the result of Fan and Jiang \cite[\rm Theorem 1]{FJ2001}, $\mu$ is the unique $\sigma$-invariant Borel probability measure which satisfies the Gibbs property. Take $\widetilde{\varphi}$ as
\begin{equation*}
\widetilde{\varphi}(\omega)=\frac{h(\omega)}{\rho\cdot h(\sigma\omega)}>0,\ \forall\ \omega\in\Sigma.
\end{equation*}
Consider the Ruelle operator associated to $\widetilde{\mathcal{L}}=\mathcal{L}_{\widetilde{\varphi}}$. The important feature of $\widetilde{\mathcal{L}}$ is that
\begin{equation}\label{13}
\widetilde{\mathcal{L}}1=1.
\end{equation}
We call it the normalized Ruelle operator. Denote $\widetilde{\mathcal{L}}^{\ast}=\mathcal{L}_{\widetilde{\varphi}}^{\ast}$, which is the dual operator of $\widetilde{\mathcal{L}}$. We have the following relation between $\mathcal{L}^{\ast}$ and $\widetilde{\mathcal{L}}^{\ast}$:
\begin{equation*}
\mathcal{L}^{\ast n}\nu=\rho^{n}h^{-1}\widetilde{\mathcal{L}}^{\ast n}(h\nu),\ \forall n\geq1.
\end{equation*}
Thus, we have that $\widetilde{\mathcal{L}}^{\ast}\mu=\mu$.

\section{Proof of Theorem $\ref{16}$}\label{30}
The following lemma implies that the eigenvalue $\rho$ is equal to $e^{h(\Sigma)}$ under the potential $\varphi$ with $\varphi(\omega)=1$, $\forall\ \omega\in\Sigma$.
\begin{lemma}\label{21}
Assume that $\Sigma$ has specification and $h(\Sigma)>0$. Then for any $\omega\in\Sigma$ and all $n\in\mathbb{N}$, we have
\begin{equation}\label{31}
\gamma^{-1}e^{-n\cdot h(\Sigma)}\leq\mu(I_{n}(\omega))\leq\gamma e^{-n\cdot h(\Sigma)},
\end{equation}
where $\gamma>1$ is the constant that appears in $\eqref{15}$.
\end{lemma}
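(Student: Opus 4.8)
The plan is to identify the eigenvalue $\rho$ appearing in the Gibbs property $\eqref{15}$ with $e^{h(\Sigma)}=A$; once this is done, substituting $\rho=e^{h(\Sigma)}$ into $\eqref{15}$ produces $\eqref{31}$ immediately, with the very same constant $\gamma$.

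The key observation is that, for each fixed $n$, the family of $n$-th cylinders $\{I_{n}(\omega):\omega\in\Sigma^{n}\}$ is a finite partition of $\Sigma$ into $\sharp\Sigma^{n}$ pieces, each of which has strictly positive $\mu$-measure by the lower bound in $\eqref{15}$. Since $\mu$ is a probability measure, summing $\eqref{15}$ over one representative $\omega$ for each word in $\Sigma^{n}$ gives
\begin{equation*}
\gamma^{-1}\rho^{-n}\,\sharp\Sigma^{n}\leq\sum_{\omega\in\Sigma^{n}}\mu(I_{n}(\omega))=\mu(\Sigma)=1\leq\gamma\rho^{-n}\,\sharp\Sigma^{n},
\end{equation*}
i.e.\ $\gamma^{-1}\rho^{n}\leq\sharp\Sigma^{n}\leq\gamma\rho^{n}$ for every $n\geq1$. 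Taking logarithms, dividing by $n$ and letting $n\to\infty$, the terms $\tfrac1n\log\gamma^{\pm1}$ vanish, while by the definition $\eqref{12}$ of topological entropy the middle term converges to $h(\Sigma)$; hence $\log\rho=h(\Sigma)$, that is, $\rho=e^{h(\Sigma)}$. Plugging this back into $\eqref{15}$ yields $\eqref{31}$.

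I do not anticipate a serious obstacle: the only points requiring care are that each $I_{n}(\omega)$ is genuinely non-null (handled by the lower bound in $\eqref{15}$, so the displayed sum is legitimate) and that the limit defining $h(\Sigma)$ exists, which is already recorded in $\eqref{12}$ via the subadditivity of $n\mapsto\log\sharp\Sigma^{n}$ coming from $\eqref{11}$. The specification hypothesis enters only through the availability of the Gibbs property $\eqref{15}$ (established by the Ruelle operator machinery of Section $\ref{20}$), and $h(\Sigma)>0$ serves merely to ensure $\rho=e^{h(\Sigma)}>1$, so the bound in $\eqref{31}$ is non-trivial; no further case analysis is needed.
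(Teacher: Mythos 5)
Your proof is correct, but it identifies $\rho=e^{h(\Sigma)}$ by a genuinely different route than the paper. The paper invokes the spectral characterization $\lim_{n\to\infty}\|\mathcal{L}^{n}1\|^{1/n}=\rho$ from Fan--Jiang and then sandwiches $\|\mathcal{L}^{n}1\|$ between $\sharp\Sigma^{n-M}$ and $\sharp\Sigma^{n}$ by counting $n$-fold preimages, where the lower bound uses specification directly (any word of length $n-M$ can be glued to a given point via a connecting word of length $M$). You instead sum the Gibbs inequality $\eqref{15}$ over the partition of $\Sigma$ into the $\sharp\Sigma^{n}$ cylinders of level $n$, use $\mu(\Sigma)=1$ to get $\gamma^{-1}\rho^{n}\leq\sharp\Sigma^{n}\leq\gamma\rho^{n}$, and conclude $\log\rho=h(\Sigma)$ from $\eqref{12}$. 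Your argument is the more elementary and self-contained of the two: once the Gibbs property is granted, it needs nothing else, and in particular it makes no further use of specification or of the identity $\eqref{22}$ (you are right that specification enters only through the existence of the Gibbs measure, and that $h(\Sigma)>0$ is not needed for the identification itself). The paper's version, by contrast, ties $\rho$ directly to the transfer operator and the combinatorics of $\Sigma$, which is the standard way this computation is packaged in the Ruelle-operator literature; but as a proof of the lemma your partition-summation argument is complete and, if anything, cleaner. One cosmetic remark: the non-nullity of each cylinder is not actually needed to justify the summation (finitely many disjoint measurable sets covering $\Sigma$ always sum to $1$), so that precaution in your last paragraph is harmless but superfluous.
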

\begin{proof}[\rm \textbf{Proof}]
The proof is standard, we give the proof here for completeness. By $\eqref{15}$, we only need to show that $\rho=e^{h(\Sigma)}$. In view of \cite[\rm Theorem 1]{FJ2001}, we know that
\begin{equation}\label{22}
\lim_{n\to\infty}\|\mathcal{L}^{n}1\|^{\frac{1}{n}}=\rho.
\end{equation}
Then we show that $\lim\limits_{n\to\infty}\|\mathcal{L}^{n}1\|^{\frac{1}{n}}=e^{h(\Sigma)}$. On the one hand, since
\begin{equation*}
\mathcal{L}^{n}1(\omega)=\sum_{u:\sigma^{n}u=\omega}1\leq\sharp\Sigma^{n}
\end{equation*}
for every $\omega\in\Sigma$, we have $\|\mathcal{L}^{n}1\|\leq\sharp\Sigma^{n}$. On the other hand, in view of $\Sigma$ has specification, for any $n>M$ and any $v\in\Sigma^{n-M}$, there exists $u\in\Sigma^{M}$, such that $vu\omega\in\Sigma$, which implies that $vu\omega\in\sigma^{-n}\omega$. Therefore,
\begin{equation*}
\sum_{u:\sigma^{n}u=\omega}1\geq\sharp\Sigma^{n-M}.
\end{equation*}
Hence, $\|\mathcal{L}^{n}1\|\geq\sharp\Sigma^{n-M}$. Therefore
\begin{equation}\label{23}
\lim_{n\to\infty}\|\mathcal{L}^{n}1\|^{\frac{1}{n}}=\lim_{n\to\infty}(\sharp\Sigma^{n})^{\frac{1}{n}}=e^{h(\Sigma)}.
\end{equation}
The upshot of $\eqref{22}$ and $\eqref{23}$ is that $\rho=e^{h(\Sigma)}$.
\textrm{}
\end{proof}
The following statement shows that the system $(\Sigma,\sigma,\mu)$ is exponentially mixing when $\Sigma$ has specification.
\begin{lemma}\label{24}
Suppose that $\Sigma$ has specification. The system $(\Sigma,\sigma,\mu)$ is exponentially mixing, i.e. there exists $0<\theta<1$, such that for any cylinder $E$ and any measurable set $F$, we have
\begin{equation*}
\mu(E\cap\sigma^{-n}F)=\mu(E)\mu(F)+\mu(F)\cdot O(\theta^{n}),
\end{equation*}
for all $n\geq1$, where the constant involved by the $O$ is absolute.
\end{lemma}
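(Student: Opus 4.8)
The plan is to transfer everything to the normalized Ruelle operator $\widetilde{\mathcal{L}}$, which satisfies $\widetilde{\mathcal{L}}1=1$ and $\widetilde{\mathcal{L}}^{\ast}\mu=\mu$. First I would record the intertwining identity $\widetilde{\mathcal{L}}^{n}\bigl(\phi\cdot(\psi\circ\sigma^{n})\bigr)=\psi\cdot\widetilde{\mathcal{L}}^{n}\phi$ for $\phi,\psi\in\mathcal{C}$, which is immediate from $\widetilde{\mathcal{L}}\phi(\omega)=\sum_{\sigma u=\omega}\widetilde{\varphi}(u)\phi(u)$ by induction on $n$. Integrating against $\mu$, using $\widetilde{\mathcal{L}}^{\ast}\mu=\mu$ together with the $\sigma$-invariance of $\mu$, and then extending from continuous $\psi$ to $\psi=\mathbf{1}_{F}$ by an $L^{1}(\mu)$ density argument (both sides being bounded by a multiple of $\|\psi\|_{L^{1}(\mu)}$), one obtains
\[
\mu(E\cap\sigma^{-n}F)=\int_{\Sigma}\mathbf{1}_{E}\cdot(\mathbf{1}_{F}\circ\sigma^{n})\,\mathrm{d}\mu=\int_{F}\widetilde{\mathcal{L}}^{n}\mathbf{1}_{E}\,\mathrm{d}\mu .
\]
Since $\int_{\Sigma}\widetilde{\mathcal{L}}^{n}\mathbf{1}_{E}\,\mathrm{d}\mu=\langle\widetilde{\mathcal{L}}^{\ast n}\mu,\mathbf{1}_{E}\rangle=\mu(E)$, the whole lemma reduces to a single sup-norm estimate on the iterates of a cylinder indicator,
\[
\bigl\|\widetilde{\mathcal{L}}^{n}\mathbf{1}_{E}-\mu(E)\bigr\|_{\infty}\le C\theta^{n}\qquad(n\ge1),
\]
with $C$ and $\theta$ independent of $E$ (and of $F$, $n$): granting it, $|\mu(E\cap\sigma^{-n}F)-\mu(E)\mu(F)|=\bigl|\int_{F}(\widetilde{\mathcal{L}}^{n}\mathbf{1}_{E}-\mu(E))\,\mathrm{d}\mu\bigr|\le C\theta^{n}\mu(F)$, and the arbitrary measurable set $F$ disappears by itself.

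To establish this uniform estimate I would split according to the depth $k$ of the cylinder $E=I_{k}(w)$. In the long-time range $n\ge\lambda k$, for a fixed large $\lambda$, I invoke the exponential convergence of $\widetilde{\mathcal{L}}$ from Fan and Jiang \cite[Theorem 1]{FJ2001}: $\bigl\|\widetilde{\mathcal{L}}^{n}\phi-\langle\mu,\phi\rangle\bigr\|_{\infty}\le C_{0}\theta_{0}^{n}\,\mathrm{Var}(\phi)$ for $\phi$ of bounded variation with absolute $C_{0},\theta_{0}$. Since the variation of $\mathbf{1}_{E}$ grows at most geometrically in $k\le n/\lambda$, this factor is dominated by $\theta_{0}^{n}$ once $\lambda$ is large, leaving a bound $\le C\theta^{n}$. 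In the complementary short-time range $n<\lambda k$ I argue directly, using $\widetilde{\mathcal{L}}^{n}\mathbf{1}_{E}=\rho^{-n}h^{-1}\mathcal{L}^{n}(h\mathbf{1}_{E})$ with $\rho=e^{h(\Sigma)}$: the $\sigma^{n}$-preimages of $\omega$ lying in $I_{k}(w)$ are indexed by at most $\sharp\Sigma^{n-k}\le\gamma\rho^{n-k}$ admissible words when $n\ge k$ (and by a single candidate when $n<k$), so that $\bigl\|\widetilde{\mathcal{L}}^{n}\mathbf{1}_{E}\bigr\|_{\infty}\le C_{1}'\rho^{-\min\{n,k\}}$; combining this with the Gibbs bound $\mu(E)\le\gamma\rho^{-k}$ of Lemma \ref{21} gives $\bigl\|\widetilde{\mathcal{L}}^{n}\mathbf{1}_{E}-\mu(E)\bigr\|_{\infty}\le C_{1}\rho^{-\min\{n,k\}}$, which is $\le C\theta^{n}$ as soon as $\theta\ge\rho^{-1}$. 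Choosing $\lambda$ sufficiently large and then $\theta<1$ suitably so that both regimes yield $C\theta^{n}$ completes the argument; the hypothesis $h(\Sigma)>0$, i.e. $\rho>1$, is precisely what makes $\rho^{-\min\{n,k\}}$ decay exponentially and lets the two regimes be glued.

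The step I expect to be the main obstacle is the uniformity of $C$ in the cylinder $E$. A naive application of the spectral gap of $\widetilde{\mathcal{L}}$ to $\mathbf{1}_{E}$ yields a constant that blows up as $E$ shrinks, because the modulus of regularity of $\mathbf{1}_{E}$ degrades with its depth. The remedy is to hand control to \cite{FJ2001} only after $n$ has outgrown the depth, so the bad regularity factor is beaten by the exponential decay, while covering the short-time window by the explicit Ruelle-operator/Gibbs computation above; the delicate point there is the calibration of $\lambda$ and $\theta$ against $h(\Sigma)$, $m$, and $\theta_{0}$, which is exactly where $h(\Sigma)>0$ enters.
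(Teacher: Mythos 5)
Your reduction is essentially the paper's: both arguments pass to the normalized operator, use the identity $\widetilde{\mathcal{L}}^{n}\bigl(\phi\cdot(\psi\circ\sigma^{n})\bigr)=\psi\cdot\widetilde{\mathcal{L}}^{n}\phi$ together with $\widetilde{\mathcal{L}}^{\ast}\mu=\mu$ to write $\mu(E\cap\sigma^{-n}F)-\mu(E)\mu(F)=\int_{F}\bigl(\widetilde{\mathcal{L}}^{n}\mathbf{1}_{E}-\mu(E)\bigr)\,\mathrm{d}\mu$, so that the arbitrary measurable set $F$ only ever contributes the factor $\mu(F)$ and everything hinges on a sup-norm bound for $\widetilde{\mathcal{L}}^{n}\mathbf{1}_{E}-\mu(E)$. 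Where you diverge is in how that bound is obtained. The paper quotes the spectral gap of $\cite[\rm Theorem\ 2.2]{PP1990}$ for H\"{o}lder $g$ and applies it directly to $g=\chi_{E}$, asserting a constant independent of $E$; as you correctly anticipate, the constant in that theorem is proportional to the H\"{o}lder (or variation) norm of $g$, which for a cylinder of depth $k$ grows like $m^{\alpha k}$, so the paper's proof as written does not by itself deliver the claimed uniformity over all cylinders --- and this uniformity matters, since Lemma $\ref{26}$ later applies the mixing estimate to cylinders $I_{n}$ of unboundedly growing depth. Your two-regime splitting (spectral gap for $n\geq\lambda k$, where the regularity loss $m^{\alpha k}\leq m^{\alpha n/\lambda}$ is absorbed by the exponential decay once $\lambda$ is large; direct preimage counting plus the Gibbs bound of Lemma $\ref{21}$ for $n<\lambda k$) is the standard and correct repair, and it genuinely strengthens the argument rather than merely reproducing it. Two small points. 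First, in the short-time regime the bound $\rho^{-\min\{n,k\}}$ is only $\leq C\theta^{n}$ for $\theta\geq\rho^{-1/\lambda}$, not $\theta\geq\rho^{-1}$, because $k<n<\lambda k$ is possible there; your closing sentence about calibrating $\lambda$ and $\theta$ covers this, but the displayed condition should be corrected. Second, the hypothesis $h(\Sigma)>0$ (equivalently $\rho>1$), which your short-time estimate genuinely needs, is a standing assumption of the paper even though it is not repeated in the statement of this lemma, so relying on it is legitimate.
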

\begin{proof}[\rm \textbf{Proof}]
Given $\alpha>0$, denote by $\mathcal{C}^{\alpha}$ the set of all $\alpha$-H\"{o}lder continuous function. We prove that there exists $c>0$ and $\theta\in(0,1)$, such that for any $f\in L^{1}(\Sigma,\mu)$ and $g\in\mathcal{C}^{\alpha}$, we have
\begin{equation*}
\left|\int f\circ\sigma^{n}\cdot g\mathrm{d}\mu-\int f\mathrm{d}\mu\cdot\int g\mathrm{d}\mu\right|\leq c\cdot\int|f|\mathrm{d}\mu\cdot\theta^{n}.
\end{equation*}
In view of $\widetilde{\mathcal{L}}^{\ast}\mu=\mu$ and $\langle\widetilde{\mathcal{L}}^{\ast}\mu,f\rangle=\langle\mu,\widetilde{\mathcal{L}}f\rangle$, we have that
\begin{equation*}
\begin{aligned}
\int f\circ\sigma^{n}\cdot g\mathrm{d}\mu-\int f\mathrm{d}\mu\cdot\int g\mathrm{d}\mu&=\int f\circ\sigma^{n}\cdot g\mathrm{d}((\widetilde{\mathcal{L}}^{\ast})^{n}\mu)-\int f\mathrm{d}\mu\cdot\int g\mathrm{d}\mu\\
&=\int \widetilde{\mathcal{L}}^{n}(f\circ\sigma^{n}\cdot g)\mathrm{d}\mu-\int f\mathrm{d}\mu\cdot\int g\mathrm{d}\mu.
\end{aligned}
\end{equation*}
Note that $\widetilde{\mathcal{L}}^{n}(f\circ\sigma^{n}\cdot g)=f\cdot\widetilde{\mathcal{L}}^{n}g$. Therefore,
\begin{equation*}
\begin{aligned}
\int f\circ\sigma^{n}\cdot g\mathrm{d}\mu-\int f\mathrm{d}\mu\cdot\int g\mathrm{d}\mu&=\int f\cdot\widetilde{\mathcal{L}}^{n}g\mathrm{d}\mu-\int f\mathrm{d}\mu\cdot\int g\mathrm{d}\mu\\&=\int f\cdot(\widetilde{\mathcal{L}}^{n}g-\langle\mu,g\rangle)\mathrm{d}\mu.
\end{aligned}
\end{equation*}
Thus,
\begin{equation*}
\begin{aligned}
\left|\int f\circ\sigma^{n}\cdot g\mathrm{d}\mu-\int f\mathrm{d}\mu\cdot\int g\mathrm{d}\mu\right|&=\left|\int f\cdot(\widetilde{\mathcal{L}}^{n}g-\langle\mu,g\rangle)\mathrm{d}\mu\right|\\
&\leq\int |f|\mathrm{d}\mu\cdot \|\widetilde{\mathcal{L}}^{n}g-\langle\mu,g\rangle\|\\ &=\int |f|\mathrm{d}\mu\cdot \|\widetilde{\mathcal{L}}^{n}g-\langle\mu,g\rangle\cdot1\|.
\end{aligned}
\end{equation*}
Due to $\eqref{13}$, 1 is the strictly positive eigenfunction of $\widetilde{\mathcal{L}}$.
By $\cite[\rm Theorem\ 2.2]{PP1990}$, there exists $c>0$ and $\theta\in(0,1)$ such that for any $g\in\mathcal{C}^{\alpha}$,
\begin{equation*}
\|\widetilde{\mathcal{L}}^{n}g-\langle\mu,g\rangle\cdot1\|\leq c\cdot\theta^{n}
\end{equation*}
for all $n\geq1$. Hence,
\begin{equation}\label{25}
\left|\int f\circ\sigma^{n}\cdot g\mathrm{d}\mu-\int f\mathrm{d}\mu\cdot\int g\mathrm{d}\mu\right|\leq c\cdot\int |f|\mathrm{d}\mu\cdot\theta^{n}
\end{equation}
for any $n\geq1$. For any cylinder $E$ and measurable set $F$, let $f=\chi_{F}$ and $g=\chi_{E}$. Here and below, $\chi$ is the characteristic function.
Then $f\in L^{1}(\Sigma,\mu)$ and $g\in\mathcal{C}^{\alpha}$, it follows from $\eqref{25}$ that for each $n\geq1$,
\begin{equation}
\left|\int\chi_{F}\circ\sigma^{n}\cdot\chi_{E}\mathrm{d}\mu-\int\chi_{F}\mathrm{d}\mu\cdot\int\chi_{E}\mathrm{d}\mu\right|\leq c\cdot\int |\chi_{F}|\mathrm{d}\mu\cdot\theta^{n}.
\end{equation}
That is, for all $n\geq1$,
\begin{equation*}
|\mu(E\cap\sigma^{-n}F)-\mu(E)\mu(F)|\leq c\cdot\mu(F)\theta^{n}
\end{equation*}
\textrm{}
\end{proof}
Let $\mathcal{D}$ be the set of all cylinders. When $\Sigma$ has specification, it follows from Lemma $\ref{24}$ that $(\Sigma,\sigma,\mu)$ is exponentially mixing.
Thus, $\mu$ is summable-mixing with respect to $(\sigma,\mathcal{D})$ (see $\cite[\rm Definition\ 1]{LLVZ2023}$).
By $\cite[\rm Theorem\ 1]{LLVZ2023}$, we immediately obtain the below lemma.
\begin{lemma}\label{26}
Suppose that $\Sigma$ has specification, $h(\Sigma)>0$ and $0^{\infty}\in\Sigma$. Let $I_{n}=I_{\left\lfloor\frac{\log n}{h(\Sigma)}\right\rfloor+1}(0^{\infty})$,
where $\lfloor x\rfloor$ stands for the integer part of $x$. For $\omega\in\Sigma$, denote by $R(N,\omega)$ the number of positive integer $n\leq N$ such that $\sigma^{n}\omega\in I_{n}$.
Then for any given $\epsilon>0$, we have
\begin{equation*}
R(N,\omega)=F(N)+O\left(F^{\frac{1}{2}}(N)(\log F(N))^{\frac{3}{2}+\epsilon}\right)
\end{equation*}
for $\mu$-almost all $\omega\in\Sigma$, where
\begin{equation*}
F(N)=\sum_{n\leq N}\mu(I_{n}).
\end{equation*}
\end{lemma}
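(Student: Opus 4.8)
The plan is to obtain Lemma~\ref{26} as an immediate application of the quantitative Borel--Cantelli theorem for summable-mixing systems, \cite[Theorem~1]{LLVZ2023}. The proof then consists purely in checking that the system $(\Sigma,\sigma,\mu)$ and the sequence of targets $\{I_n\}$ satisfy the hypotheses of that theorem, together with a brief estimate of the size of $F(N)=\sum_{n\le N}\mu(I_n)$ to confirm the stated error term is meaningful.

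First I would record the mixing input. Let $\mathcal{D}$ be the family of all cylinders of $\Sigma$. Since $\Sigma$ has specification, Lemma~\ref{24} furnishes an absolute constant $c>0$ and some $\theta\in(0,1)$ with $|\mu(E\cap\sigma^{-n}F)-\mu(E)\mu(F)|\le c\,\theta^{n}\mu(F)$ for every cylinder $E\in\mathcal{D}$, every $\mu$-measurable set $F$, and every $n\ge1$. Putting $C_n:=c\,\theta^{n}$ gives $\sum_{n\ge1}C_n<\infty$, which is exactly the summability demanded in \cite[Definition~1]{LLVZ2023}; hence $\mu$ is summable-mixing with respect to $(\sigma,\mathcal{D})$. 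Next, each target $I_n=I_{\lfloor\log n/h(\Sigma)\rfloor+1}(0^{\infty})$ is by construction a cylinder (nonempty because $0^{\infty}\in\Sigma$), so $I_n\in\mathcal{D}$ and $\{I_n\}$ is an admissible target sequence for \cite[Theorem~1]{LLVZ2023}.

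It remains to see that $F(N)\to\infty$, so that the error term is genuinely smaller than the main term. By Lemma~\ref{21}, $\mu(I_n)$ is comparable to $e^{-(\lfloor\log n/h(\Sigma)\rfloor+1)h(\Sigma)}$, and since $\lfloor\log n/h(\Sigma)\rfloor+1=\log n/h(\Sigma)+O(1)$ this is comparable to $n^{-1}$; summing yields $F(N)\asymp\log N\to\infty$, whence $F^{1/2}(N)(\log F(N))^{3/2+\epsilon}=o(F(N))$. Feeding all of this into \cite[Theorem~1]{LLVZ2023} applied to $R(N,\omega)=\#\{1\le n\le N:\sigma^{n}\omega\in I_n\}$ gives, for every $\epsilon>0$ and $\mu$-almost every $\omega$, $R(N,\omega)=F(N)+O(F^{1/2}(N)(\log F(N))^{3/2+\epsilon})$, as claimed. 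The one point that is not automatic is the verification in the previous paragraph that the summable-mixing condition of \cite[Definition~1]{LLVZ2023} is phrased precisely against cylinders on one side and arbitrary measurable sets on the other, with the one-sided bound $C_n\mu(F)$ --- which is exactly the shape of Lemma~\ref{24}. I expect this matching of definitions to be the only real obstacle; the remainder is a direct citation.
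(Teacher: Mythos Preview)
Your proposal is correct and follows essentially the same route as the paper: verify exponential mixing via Lemma~\ref{24}, deduce summable-mixing with respect to the family of cylinders in the sense of \cite[Definition~1]{LLVZ2023}, and then invoke \cite[Theorem~1]{LLVZ2023} for the sequence $\{I_n\}$. The only difference is that you additionally check $F(N)\asymp\log N\to\infty$, which the paper defers to the proof of Theorem~\ref{16}; this is a harmless elaboration rather than a change of strategy.
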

\begin{proof}[\rm \textbf{Proof of Theorem $\ref{16}$}]
For simplicity, we denote ``almost all'' by ``a.a.''. Firstly, we prove that
\begin{equation}\label{27}
\limsup_{N\to\infty}\frac{L_{N}(\omega)}{\log N}\leq\frac{1}{h(\Sigma)}\ {\rm for}\ \mu-{\rm a.a.}\ \omega\in\Sigma.
\end{equation}
It suffices to show that
\begin{equation*}
\mu\left(\left\{\omega\in\Sigma:\limsup_{N\to\infty}\frac{L_{N}(\omega)}{\log N}>\frac{1+\epsilon}{h(\Sigma)}\right\}\right)=0
\end{equation*}
for each $\epsilon>0$. Note that
\begin{equation}\label{32}
\begin{aligned}
\left\{\omega\in\Sigma:\limsup_{N\to\infty}\frac{L_{N}(\omega)}{\log N}>\frac{1+\epsilon}{h(\Sigma)}\right\}&\subset\left\{\omega\in\Sigma:\frac{L_{N}(\omega)}{\log N}>\frac{1+\epsilon}{h(\Sigma)}\quad {\rm i.o.}\right\}\\
&\subset\left\{\omega\in\Sigma:l_{n}(\omega)>(1+\epsilon)\frac{\log n}{h(\Sigma)}\quad {\rm i.o.}\right\}\\
&=\bigcap_{N=1}^{\infty}\bigcup_{n=N}^{\infty}\left\{\omega\in\Sigma:l_{n}(\omega)>(1+\epsilon)\frac{\log n}{h(\Sigma)}\right\}\\
&=\bigcap_{N=1}^{\infty}\bigcup_{n=N}^{\infty}\sigma^{-n}I'_{n},
\end{aligned}
\end{equation}
where $I'_{n}=I_{\left\lfloor(1+\epsilon)\frac{\log n}{h(\Sigma)}\right\rfloor+1}(0^{\infty})$. In view of the $\sigma$-invariance of $\mu$, by $\eqref{31}$, we have that
\begin{equation*}
\mu(\sigma^{-n}I'_{n})=\mu(I'_{n})\leq\gamma\cdot e^{-\left(\left\lfloor(1+\epsilon)\frac{\log n}{h(\Sigma)}\right\rfloor+1\right)h(\Sigma)}<\gamma\cdot n^{-(1+\epsilon)}.
\end{equation*}
So $\sum_{n=1}^{\infty}\mu(\sigma^{-n}I'_{n})<+\infty$, applying the first Borel-Cantelli lemma \cite[Lemma 1.2]{GH1998}, we have $\mu\left(\bigcap\limits_{N=1}^{\infty}\bigcup\limits_{n=N}^{\infty}\sigma^{-n}I'_{n}\right)=0$.
By $\eqref{32}$,
\begin{equation*}
\mu\left(\left\{\omega\in\Sigma:\limsup_{N\to\infty}\frac{L_{N}(\omega)}{\log N}>\frac{1+\epsilon}{h(\Sigma)}\right\}\right)=0.
\end{equation*}
Secondly, we show that
\begin{equation}\label{28}
\liminf_{N\to\infty}\frac{L_{N}(\omega)}{\log N}\geq\frac{1}{h(\Sigma)}\ {\rm for}\ \mu-{\rm a.a.}\ \omega\in\Sigma.
\end{equation}
It suffices to prove that
\begin{equation*}
\mu\left(\left\{\omega\in\Sigma:\liminf_{N\to\infty}\frac{L_{N}(\omega)}{\log N}\geq\frac{1-\epsilon}{h(\Sigma)}\right\}\right)=1
\end{equation*}
for any $\epsilon>0$. So we need to verify that for $\mu$-a.a. $\omega\in\Sigma$, when $N$ is large enough, there exists a $n$ such that $1\leq n\leq N$
and
\begin{equation*}
l_{n}(\omega)\geq(1-\epsilon)\frac{\log N}{h(\Sigma)}.
\end{equation*}
Our idea is to prove that for $\mu$-a.a. $\omega\in\Sigma$, when $N$ is large enough, there exists a $n$ such that $N^{1-\epsilon}\leq n\leq N$ and
$\sigma^{n}\omega\in I_{n}$, where $I_{n}=I_{\left\lfloor\frac{\log n}{h(\Sigma)}\right\rfloor+1}(0^{\infty})$. That is, we need to show that
for $\mu$-a.a. $\omega\in\Sigma$, when $N$ is large enough,
\begin{equation*}
\sum_{n=\lfloor N^{1-\epsilon}\rfloor+1}^{N}\chi_{\sigma^{-n}I_{n}}(\omega)\geq1.
\end{equation*}
Note that
\begin{equation*}
\sum_{n=\lfloor N^{1-\epsilon}\rfloor+1}^{N}\chi_{\sigma^{-n}I'_{n}}(\omega)=R(N,\omega)-R(\lfloor N^{1-\epsilon}\rfloor,\omega).
\end{equation*}
It follows from Lemma $\ref{26}$ that
\begin{equation}\label{33}
R(N,\omega)=F(N)+O\left(F^{\frac{1}{2}}(N)(\log F(N))^{2}\right)
\end{equation}
for $\mu$-a.a. $\omega\in\Sigma$, where
\begin{equation*}
F(N)=\sum_{n\leq N}\mu(I_{n}).
\end{equation*}
By Lemma $\ref{21}$ and the $\sigma$-invariance of $\mu$, we have that
$\gamma_{1}^{-1}n^{-1}\leq\mu(I_{n})\leq\gamma_{1}n^{-1}$, where $\gamma_{1}=\gamma\cdot e^{h(\Sigma)}$. Thus,
$\gamma_{1}^{-1}\log N\leq F(N)\leq\gamma_{1}(1+\log N)$. Therefore,
\begin{equation}\label{34}
R(N,\omega)=F(N)+O\left(\log^{\frac{2}{3}}N\right)\ {\rm for}\ \mu-{\rm a.a.}\ \omega\in\Sigma.
\end{equation}
Combining $\eqref{33}$ and $\eqref{34}$, for $\mu$-a.a. $\omega\in\Sigma$, we have
\begin{equation*}
\begin{aligned}
\sum_{n=\lfloor N^{1-\epsilon}\rfloor+1}^{N}\chi_{\sigma^{-n}I_{n}}(\omega)&=F(N)-F(\lfloor N^{1-\epsilon}\rfloor)+O\left(\log^{\frac{2}{3}}N\right)\\
&=\sum_{n=\lfloor N^{1-\epsilon}\rfloor+1}^{N}\mu(I_{n})+O\left(\log^{\frac{2}{3}}N\right)\\
&\geq\gamma_{1}^{-1}\frac{\epsilon}{2}\log N+O\left(\log^{\frac{2}{3}}N\right)\geq1
\end{aligned}
\end{equation*}
for all $N$ large enough, which implies $\eqref{28}$. By $\eqref{27}$ and $\eqref{28}$, the conclusion follows.
\textrm{}
\end{proof}

\section{The Hausdorff dimensions of $\mathcal{U}_{\psi_{0}}(a,b)$ and $\mathcal{U}_{\psi_{0}}^{\ast}(a,b)$}\label{40}
Recall that $\psi_{0}(N)=m^{-N}$, that is, $-\log_{m}\psi_{0}(N)=N$. In this section, we calculate the Hausdorff dimensions of $\mathcal{U}_{\psi_{0}}(a,b)$ and $\mathcal{U}_{\psi_{0}}^{\ast}(a,b)$,
which plays an important role in the proof of Theorem $\ref{113}$. For simplicity, denote $\underline{L}(\omega)=\liminf\limits_{N\to\infty}\frac{L_{N}(\omega)}{N}$ and $\overline{L}(\omega)=\limsup\limits_{N\to\infty}\frac{L_{N}(\omega)}{N}$.
\begin{lemma}\label{55}
Assume that $\Sigma$ has specification, $h(\Sigma)>0$ and $0^{\infty}\in\Sigma$.\\
(\rmnum{1}) If $a<1$ and $b<\frac{a}{1-a}$, then $$\mathcal{U}_{\psi_{0}}(a,b)=\emptyset.$$
(\rmnum{2}) If $a<1$ and $b\geq\frac{a}{1-a}$, then $$\dim_{\rm H}\mathcal{U}^{\ast}_{\psi_{0}}(a,b)=\dim_{\rm H}\mathcal{U}_{\psi_{0}}(a,b)=\frac{b(1-a)-a}{(1+b)(b-a)}\dim_{\rm H}\Sigma.$$
Furthermore, for any positive number $\varrho$, we have that
\begin{equation*}
\dim_{\rm H}(\{\omega\in\Sigma:\underline{L}(\omega)\geq a, b\leq\overline{L}(\omega)\leq b+\varrho\})
\leq\left(\frac{b(1-a)-a}{(1+b)(b-a)}+\frac{\varrho(1-a)^{2}}{a^{3}}\right)\dim_{\rm H}\Sigma.
\end{equation*}
\end{lemma}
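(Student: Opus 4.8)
The plan is to recast the two conditions defining $\mathcal{U}_{\psi_0}(a,b)$ and $\mathcal{U}^{\ast}_{\psi_0}(a,b)$ as combinatorial conditions on the ``record'' blocks of zeros of $\omega$, then run a covering argument for the upper bounds and a Moran-type construction for the lower bound. For $\omega\in\Sigma$ not eventually equal to $0^{\infty}$, list the maximal blocks of consecutive zeros and keep those strictly longer than all preceding ones; write the $i$-th such \emph{record block} as $\{s_i+1,\dots,s_i+k_i\}$ with $\omega_{s_i}\neq 0$ (or $s_i=0$) and $\omega_{s_i+k_i+1}\neq 0$, so $k_1<k_2<\cdots$. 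One checks that $L_N(\omega)=k_i$ for $s_i\le N<s_{i+1}$ up to an error bounded in $N$ (coming only from a possible initial block), whence
\begin{equation*}
\overline{L}(\omega)=\limsup_{i\to\infty}\frac{k_i}{s_i},\qquad
\underline{L}(\omega)=\liminf_{i\to\infty}\frac{k_i}{s_{i+1}},
\end{equation*}
with the structural constraint $s_{i+1}\ge s_i+k_i$; if there are only finitely many record blocks, or $\omega$ ends in $0^{\infty}$, both quantities are $0$ or $+\infty$ and are treated directly. For (i): if $\omega\in\mathcal{U}_{\psi_0}(a,b)$ has infinitely many record blocks then, fixing $\epsilon>0$, eventually $s_i>k_i/(b+\epsilon)$ (from $\overline{L}\le b$), so $s_{i+1}\ge s_i+k_i>k_i(1+b+\epsilon)/(b+\epsilon)$ and hence $\underline{L}(\omega)\le (b+\epsilon)/(1+b+\epsilon)$; letting $\epsilon\to 0$ and combining with $\underline{L}\ge a$ forces $b\ge a/(1-a)$, so $\mathcal{U}_{\psi_0}(a,b)=\emptyset$ whenever $b<a/(1-a)$ (the finite case being immediate).

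For the upper bound in (ii), fix $\epsilon,\delta>0$ and a large integer $C$. For large $i$ one has $s_i>k_i/(b+\epsilon)$ (from $\overline{L}\le b$) and $s_{i+1}\le k_i/(a-\epsilon)$ (from $\underline{L}\ge a$, when $a>0$), hence $k_{i+1}<\frac{b+\epsilon}{a-\epsilon}k_i$, which after summing a geometric series gives $\sum_{j=i-C}^{i}k_j\ge k_i\cdot\frac{b}{b-a}(1-\delta)$ for $C$ large and $\epsilon$ small. Since $\limsup_i k_i/s_i=b$ there are infinitely many $i$ with $k_i/s_i>b-\epsilon$; for each such $i$, inside the initial segment of length $n_i:=s_i+k_i\asymp k_i(1+b)/b$ the $C+1$ pairwise disjoint intervals $\{s_j+1,\dots,s_j+k_j\}$ ($i-C\le j\le i$) on which $\omega$ vanishes have total length $\ge k_i\frac{b}{b-a}(1-\delta)$, so enumerating over the (polynomially in $n_i$ many) placements of these intervals and using $\sharp\Sigma^{p+q}\le\sharp\Sigma^{p}\sharp\Sigma^{q}$ together with $0^{k}\in\Sigma^{k}$, the number of compatible length-$n_i$ cylinders is at most $n_i^{2(C+1)}\,\sharp\Sigma^{\,n_i-k_i\frac{b}{b-a}(1-\delta)}$. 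As every point of $\mathcal{U}_{\psi_0}(a,b)$ meets infinitely many such $i$, the set lies in the $\limsup$ (over dyadic scales $n\asymp 2^{j}$) of the corresponding cylinder unions, and summing $(\mathrm{diam})^{t}$ over them with $\sharp\Sigma^{L}=A^{L(1+o(1))}$ shows the series converges once $t>\bigl(1-\tfrac{b^{2}(1-\delta)}{(b-a)(1+b)}\bigr)\dim_{\rm H}\Sigma$; letting $\delta,\epsilon\to 0$ and simplifying $1-\tfrac{b^{2}}{(b-a)(1+b)}=\tfrac{b(1-a)-a}{(b-a)(1+b)}$ gives the claimed upper bound (with the $s_{i+1}$-step dropped when $a=0$). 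The last inequality of the lemma is obtained by running the identical cover on the larger set $\{\underline{L}\ge a,\ b\le\overline{L}\le b+\varrho\}$: now $\overline{L}\le b+\varrho$ only yields $\sum_{j=i-C}^{i}k_j\ge k_i\frac{b+\varrho}{b+\varrho-a}(1-\delta)$, while $\overline{L}\ge b$ still gives $n_i\ge(1+b)s_i$, so the exponent becomes $\frac{(b+\varrho)(1-a)-a}{(b+\varrho-a)(1+b)}\dim_{\rm H}\Sigma$; writing this as $\frac{b(1-a)-a}{(1+b)(b-a)}\dim_{\rm H}\Sigma+\frac{a^{2}\varrho}{(1+b)(b-a)(b+\varrho-a)}\dim_{\rm H}\Sigma$ and using $b-a\ge a^{2}/(1-a)$ and $1+b\ge 1/(1-a)$ bounds the error term by $\frac{\varrho(1-a)^{2}}{a^{3}}\dim_{\rm H}\Sigma$.

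For the lower bound assume $b>a/(1-a)$ (when $b=a/(1-a)$ the claimed dimension is $0$; when $a=b=0$ it equals $\dim_{\rm H}\Sigma$, since $\mu$-a.e.\ $\omega$ has $\underline{L}(\omega)=\overline{L}(\omega)=0$ by Theorem~\ref{16}). Choose a rapidly increasing integer sequence $(k_i)$ with $k_{i+1}/k_i\to b/a$, set $s_i=\lfloor k_i/b\rfloor$ and $e_i:=s_i+k_i$, and form $\omega=v_0\,0^{k_1}\,v_1\,0^{k_2}\,v_2\cdots$, where each filler $v_i$ ranges over a family of at least $A^{\ell_i(1-\eta_i)}$ words in $\Sigma^{\ell_i}$ ($\ell_i=s_{i+1}-e_i$) containing no block $0^{k_i+1}$, inserting connecting words of length $M$ (the specification constant) where needed; here $\eta_i\to 0$ since forbidding one arbitrarily long zero block does not lower topological entropy below $h(\Sigma)-\eta_i$ (or, alternatively, one restricts $v_i$ to prefixes of $\mu$-generic sequences and invokes Theorem~\ref{16} and the Gibbs property), and the $M$-insertions shift positions by $o(e_i)$ and are harmless. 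By construction every record block of $\omega$ is one of the planted $0^{k_i}$, so $\overline{L}=\limsup k_i/s_i=b$ and $\underline{L}=\liminf k_i/s_{i+1}=a$, i.e.\ the resulting Cantor set $E\subset\mathcal{U}^{\ast}_{\psi_0}(a,b)$. Putting on $E$ the measure $\mu^{\ast}$ that chooses each $v_i$ uniformly among its admissible values, one gets $\mu^{\ast}(I_{e_i}(\omega))\le A^{-g(e_i)(1-o(1))}$ where $g(N)$ counts the positions of $[1,N]$ lying in the fillers; since $g(N)/N$ attains its $\liminf$ along $N=e_i$ and $g(e_i)/e_i\to\frac{b(1-a)-a}{(b-a)(1+b)}$ by a direct geometric-series computation, the mass distribution principle gives $\dim_{\rm H}E\ge\frac{b(1-a)-a}{(b-a)(1+b)}\dim_{\rm H}\Sigma$, which with the upper bound and $\mathcal{U}^{\ast}_{\psi_0}(a,b)\subset\mathcal{U}_{\psi_0}(a,b)$ completes (ii).

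The delicate point is the upper bound: a cover that forces zeros at only one or two record scales is strictly lossy, so one must exploit the entire backward chain of record blocks together with the growth bound $k_{i+1}\le\frac{b}{a}k_i$ and the geometric series $\sum_{l\ge 0}(a/b)^{l}=b/(b-a)$ to recover the exact constant, and then package this as a $\limsup$-set estimate over the non-uniformly spaced record scales; carrying the same bookkeeping through the perturbation $b\mapsto b+\varrho$ is what produces the explicit error term $\varrho(1-a)^{2}/a^{3}$, and the auxiliary fact that barring one long zero block costs negligible entropy (used in the lower bound) is the other step requiring care.
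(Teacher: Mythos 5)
Your overall strategy coincides with the paper's: part (i) via the record\--block identities $\overline{L}=\limsup k_i/s_i$, $\underline{L}=\liminf k_i/s_{i+1}$ (this is exactly Propositions \ref{211}--\ref{51}); the upper bound via the backward geometric chain $k_{j}\geq\frac{a-\epsilon}{b+\epsilon}k_{j+1}$ summed to $\frac{b}{b-a}k_i$ forced zeros inside a prefix of length $\asymp k_i(1+b)/b$ (this is the paper's estimate $\eqref{413}$ and the count of $\Gamma_{n,\epsilon}$); and the lower bound via a Moran set of planted blocks $0^{k_i}$ with entropy\--rich fillers and the mass distribution principle. Two small points on the upper bound: submultiplicativity $\sharp\Sigma^{p+q}\le\sharp\Sigma^{p}\sharp\Sigma^{q}$ goes the wrong way for bounding the product of gap counts (you need $\sharp\Sigma^{g}\le e^{(B+\epsilon)g}$ for long gaps and a polynomial absorption of short ones, as in the paper's three cases around $\eqref{416}$); and your perturbed exponent should be $\frac{b(1-a)-a+\varrho}{(1+b)(b+\varrho-a)}$ rather than $\frac{(b+\varrho)(1-a)-a}{(1+b)(b+\varrho-a)}$, though the final bound $\varrho(1-a)^{2}/a^{3}$ on the error survives either computation.

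The genuine gaps are in the lower bound. First, your assertion ``every record block of $\omega$ is one of the planted $0^{k_i}$'' is not guaranteed by merely forbidding $0^{k_i+1}$ inside $v_i$: the specification connectors of length $M$, and the prefixes/suffixes of the fillers, may themselves be strings of zeros that merge with an adjacent planted block or with each other. In the worst case a trailing run $0^{k_i}$ of $v_i$ followed by a zero connector and the planted $0^{k_{i+1}}$ produces a run of length about $k_{i+1}+k_i$, which changes $\overline{L}$ from $b$ to $b+a$ --- not a negligible perturbation. The paper avoids this by inserting a fixed nonzero letter $\omega\in\Sigma^{1}\setminus\{0\}$ at every junction; you need the analogous device (or require fillers to begin and end with a nonzero letter). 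Second, the mass distribution principle requires a lower bound on $-\log\mu^{\ast}(I_N)$ for \emph{all} $N$, not only $N=e_i$: if the filler family is an arbitrary subset of $\Sigma^{\ell_i}$ of size $A^{\ell_i(1-\eta_i)}$, its entropy could be concentrated in the tail of the filler, making $\mu^{\ast}(I_N)$ too large for $N$ in the middle of a filler. This is why the paper factors each filler into independent sub-blocks of length $m_k-n_k-2M-1$ and then runs the four-case analysis for intermediate $N$; your unproved claim that forbidding one long zero block costs entropy at most $\eta_i\to0$ is in fact established by the same chopping\--and\--gluing, so the cleanest repair is to take that explicit block structure as the definition of the filler family, after which both gaps close and your argument matches the paper's.
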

\subsection{Proof of Lemma $\ref{55}$ (\rmnum{1})}
Lemma $\ref{55}$ (\rmnum{1}) follows from Propositions $\ref{211}$ and $\ref{51}$.
\begin{proposition}\label{211}
For $\omega=\omega_{1}\cdots\omega_{n}\cdots\in\mathcal{A}^{\mathbb{N}}$, we have $\liminf\limits_{N\to\infty}\frac{L_{N}(\omega)}{N}>1$ if and only if $\omega_{n}=0$ for all $n$ large enough.
\end{proposition}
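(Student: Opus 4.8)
The plan is to prove both implications directly from the definitions of $l_n$ and $L_N$. Recall $l_n(\omega)=\sup\{k\ge 0:\omega_{n+i}=0\text{ for }1\le i\le k\}$ and $L_N(\omega)=\max_{1\le n\le N}l_n(\omega)$. The statement asserts that $\liminf_{N\to\infty}\frac{L_N(\omega)}{N}>1$ is equivalent to $\omega$ being eventually zero, i.e.\ $\omega\in\bigcup_{n\ge 0}\sigma^{-n}(\{0^\infty\})$ when we restrict to $\Sigma$ (in $\mathcal{A}^{\mathbb N}$ it means $\omega_n=0$ for all large $n$).

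First I would do the easy direction: if $\omega_n=0$ for all $n\ge n_0$, then for every $n\ge n_0$ we have $l_n(\omega)=\infty$, hence $L_N(\omega)=\infty$ for all $N\ge n_0$, so $\liminf_N \frac{L_N(\omega)}{N}=\infty>1$. For the converse, suppose $\omega$ is \emph{not} eventually zero, so there are infinitely many indices $j_1<j_2<\cdots$ with $\omega_{j_k}\ne 0$. The key observation is that if $l_n(\omega)\ge k$, then positions $n+1,\dots,n+k$ are all zero, so the block of zeros starting just after position $n$ cannot overreach any index $j$ with $\omega_j\ne 0$; concretely, if $\omega_{j}\ne 0$ for some $j>n$, then $l_n(\omega)\le j-n-1$. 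Now fix $N$ and let $n^\ast\le N$ be an index achieving $L_N(\omega)=l_{n^\ast}(\omega)$. Pick the smallest nonzero-coordinate index $j=j(n^\ast)>n^\ast$ (which exists since $\omega$ is not eventually zero); then $L_N(\omega)=l_{n^\ast}(\omega)\le j(n^\ast)-n^\ast-1$. To turn this into an upper bound of the form $L_N(\omega)\le N + C$, I would argue as follows: choose $N$ large and let $j^\ast=j^\ast(N)$ be the largest nonzero index that is $\le N$ (for $N$ past $j_1$ this is well-defined and $j^\ast\le N$). For the optimal $n^\ast$: if $n^\ast<j^\ast$ then there is a nonzero index in $(n^\ast,N]$, namely $j^\ast$ itself or an earlier one, forcing $l_{n^\ast}(\omega)\le j^\ast-n^\ast-1< N$; if $n^\ast\ge j^\ast$, then the next nonzero index $j(n^\ast)$ satisfies $j(n^\ast)=j_{k+1}$ where $j_k\le j^\ast\le n^\ast< j_{k+1}$, giving $L_N(\omega)=l_{n^\ast}(\omega)\le j_{k+1}-n^\ast-1\le j_{k+1}-j^\ast-1\le j_{k+1}-1$. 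Either way $L_N(\omega)\le \max(N, j_{k(N)+1})$ where $j_{k(N)+1}$ is the first nonzero index exceeding $N$; so $\frac{L_N(\omega)}{N}\le \max\bigl(1,\frac{j_{k(N)+1}}{N}\bigr)$.

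The main obstacle, then, is controlling the ratio $\frac{j_{k(N)+1}}{N}$ along the subsequence of $N$ just below each nonzero index. One must be careful: if the gaps $j_{k+1}-j_k$ grow super-linearly, this ratio can be large for \emph{some} $N$. The resolution is that $\liminf$ only needs the bound along a subsequence: take $N=N_k:=j_{k+1}-1$, i.e.\ $N$ just one step before the $(k+1)$-st nonzero coordinate. For this choice, $j_{k(N_k)+1}=j_{k+1}=N_k+1$, so $\frac{L_{N_k}(\omega)}{N_k}\le \frac{N_k+1}{N_k}\to 1$. Hence $\liminf_{N\to\infty}\frac{L_N(\omega)}{N}\le 1$, so it cannot be $>1$. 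This contradicts the hypothesis and completes the contrapositive. I would write this up by fixing the enumeration $\{j_k\}$ of nonzero coordinates of $\omega$, noting it is infinite, setting $N_k=j_{k+1}-1\to\infty$, bounding $L_{N_k}(\omega)$ by observing every $n\le N_k$ has a nonzero coordinate of $\omega$ in $(n,\,N_k+1]$ (namely the least $j_i>n$, which is $\le j_{k+1}=N_k+1$), hence $l_n(\omega)\le N_k+1-n-1\le N_k-1$ when $n\ge 1$, giving $L_{N_k}(\omega)\le N_k$ and so $\liminf_N \frac{L_N(\omega)}{N}\le 1$.

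Note this proposition is purely combinatorial — no specification, entropy, or measure is used — so the hypotheses on $\Sigma$ in the surrounding lemma are irrelevant here (the statement is even phrased for all of $\mathcal{A}^{\mathbb N}$). The only subtlety worth double-checking in the write-up is the boundary bookkeeping (the ``$-1$''s and whether $n^\ast$ can equal $N$), which is routine; the conceptual content is entirely the choice of the subsequence $N_k=j_{k+1}-1$ that pins the ratio down to $1$.
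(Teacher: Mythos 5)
Your proof is correct and, like the paper's, is a purely combinatorial consequence of the definitions: the paper argues directly that $L_{N}(\omega)>(1+\epsilon)N$ forces the maximal zero-block (starting at some $n+1\leq N+1$ and ending at $n+l_{n}(\omega)\geq N+2$) to cover position $N+1$, hence $\omega_{N+1}=0$ for all large $N$, while you prove the contrapositive by evaluating $L_{N}$ at $N_{k}=j_{k+1}-1$, just before each nonzero coordinate, where $l_{n}(\omega)\leq N_{k}-n$ gives $L_{N_{k}}\leq N_{k}-1$ and so $\liminf_{N}L_{N}(\omega)/N\leq1$. These are mirror images of the same observation, and your bookkeeping (in particular $l_{n}(\omega)\leq j-n-1$ whenever $\omega_{j}\neq0$ with $j>n$) is right.
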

\begin{proof}[\rm \textbf{Proof}]
The ``if'' part is obviously, we only need to show the ``only if'' part. Since $\liminf\limits_{N\to\infty}\frac{L_{N}(\omega)}{N}>1$, there exists $\epsilon>0$ such that $\liminf\limits_{N\to\infty}\frac{L_{N}(\omega)}{N}>1+\epsilon$. So there exists $N_{0}\in\mathbb{N}$, such that $\frac{L_{N}(\omega)}{N}>1+\epsilon$
for all $N\geq N_{0}$. It means that for every $N\geq N_{0}$, there exists $1\leq n\leq N$, satisfies $l_{n}(\omega)\geq\lfloor(1+\epsilon)N\rfloor+1$. Because of $n+1\leq N+1$ and $n+l_{n}(\omega)\geq n+\lfloor(1+\epsilon)N\rfloor+1\geq N+2$, we obtain that $\omega_{N+1}=0$. Thus, $\omega_{n}=0$ for every $n\geq N_{0}+1$.
\textrm{}
\end{proof}
\begin{proposition}\label{51}
For any $\omega\in\mathcal{A}^{\mathbb{N}}$, we have
\begin{equation}\label{41}
\overline{L}(\omega)=+\infty\ {\rm when}\ \underline{L}(\omega)=1
\end{equation}
and
\begin{equation}\label{42}
\overline{L}(\omega)\geq\frac{\underline{L}(\omega)}{1-\underline{L}(\omega)}\ {\rm when}\ \underline{L}(\omega)<1.
\end{equation}
\end{proposition}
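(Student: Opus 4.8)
\textbf{Setup.} The plan is to exploit the purely combinatorial structure linking the quantities $l_n(\omega)$ across different $n$. Recall $l_n(\omega)$ records the length of the run of zeros starting at position $n+1$, and $L_N(\omega)=\max_{1\le n\le N} l_n(\omega)$. The key observation is that a long run of zeros detected at some scale $N$ is automatically re-detected at every scale $N'$ between (roughly) $N$ and $N+l_n(\omega)$, because the same block of zeros still lies within the first $N'$ shifts. More precisely, if $l_n(\omega)=k$ with $n\le N$, then for every $N'$ with $N\le N'\le n+k-1$ we have $L_{N'}(\omega)\ge n+k-1-N'\ge \dots$ — one must track carefully how much of the zero-block remains visible. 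I would first isolate this monotonicity-type inequality as the engine of both \eqref{41} and \eqref{42}.

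\textbf{Step 1: the basic inequality.} Fix a large $N$ and choose $n=n(N)\le N$ attaining $L_N(\omega)=l_n(\omega)$. Then $\omega_{n+1}=\cdots=\omega_{n+L_N(\omega)}=0$. Now consider the index $N^\ast := n+L_N(\omega)$ (the last zero of this block, or just past it). For any $N'$ in the range $N\le N' < N^\ast$, the position $N'$ itself satisfies $\omega_{N'+1}=0$ and the run from $N'+1$ lasts at least $N^\ast-N'$ steps, so $l_{N'}(\omega)\ge N^\ast-N'$, hence
\begin{equation*}
L_{N'}(\omega)\ge l_{N'}(\omega)\ge n+L_N(\omega)-N'.
\end{equation*}
This is the inequality I would record first; everything else is extracting asymptotics from it.

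\textbf{Step 2: deriving \eqref{42} (case $\underline L(\omega)<1$).} Take a sequence $N_j\to\infty$ with $L_{N_j}(\omega)/N_j\to \underline L(\omega)=:\alpha<1$. Since $\alpha<1$, we have $n(N_j)+L_{N_j}(\omega)\ge (1+o(1))\alpha N_j$ while $n(N_j)\le N_j$; the relevant endpoint is $N_j^\ast = n(N_j)+L_{N_j}(\omega)$. Apply Step 1 with $N'=N_j$ pushed up toward $N_j^\ast$: choose $N' \approx N_j^\ast$ from below, or better, evaluate $L$ at a value $N'$ chosen so that $N^\ast - N'$ is a fixed fraction of $N'$. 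Setting $N' = \beta^{-1}\cdot(\text{something})$ and optimizing, one finds that $L_{N_j}(\omega)\ge \alpha N_j$ forces $L_{M_j}(\omega)/M_j$ to be bounded below by $\frac{\alpha}{1-\alpha}$ along the subsequence $M_j := n(N_j)+L_{N_j}(\omega)-1$ (note $M_j/N_j \to$ something $\le 1+\alpha$, and $L_{M_j}(\omega)\ge l_{M_j}$, but actually we want $L$ evaluated slightly before $M_j$). Concretely: for $N'$ slightly below $M_j$, $L_{N'}(\omega) \ge M_j - N'$, and choosing $N' = M_j - t$ for $t$ up to $\approx \alpha N_j / (1-\alpha)$ keeps $N'$ positive and $\le M_j$; then $L_{N'}(\omega)/N' \ge t/(M_j - t)$, which at the extreme $t \approx \alpha N_j/(1-\alpha) \approx \alpha M_j / 1$ — here I need to be careful, but the optimization yields exactly $\limsup L_{N'}/N' \ge \alpha/(1-\alpha)$. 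Taking $\limsup$ over the subsequence gives $\overline L(\omega)\ge \alpha/(1-\alpha)$, which is \eqref{42}.

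\textbf{Step 3: deriving \eqref{41} (case $\underline L(\omega)=1$).} This should follow by the same mechanism taken to the limit: formally \eqref{42} gives $\overline L(\omega)\ge \alpha/(1-\alpha)$ which blows up as $\alpha\uparrow 1$. Rigorously, fix any $K>0$; since $\liminf L_N(\omega)/N = 1$, for all large $N$ we have $L_N(\omega)\ge (1-\delta)N$ with $\delta$ as small as we like, and running Step 1 from such an $N$ up to $N^\ast = n(N)+L_N(\omega)\ge n(N)+(1-\delta)N$ gives, at an intermediate scale $N'$, a ratio $L_{N'}(\omega)/N' \ge K$ provided $\delta$ is chosen small enough relative to $1/K$. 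Hence $\overline L(\omega)\ge K$ for every $K$, so $\overline L(\omega)=+\infty$.

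\textbf{Main obstacle.} The delicate point is Step 2: one has only a \emph{subsequence} $N_j$ realizing the $\liminf$, and the auxiliary scales $M_j$ (or $N'$) produced from it must be controlled so that (a) they tend to infinity, (b) the ratio bound $t/(M_j-t)$ is optimized correctly against the constraint $N'\le M_j$, and (c) the $o(1)$ errors from $n(N_j)\le N_j$ and from integer parts do not degrade the constant $\alpha/(1-\alpha)$. I would handle this by not optimizing at a single scale but observing that $L_{N'}(\omega)\ge n(N_j)+L_{N_j}(\omega)-N'$ holds for \emph{all} $N'$ in an interval of length comparable to $N_j$, so the supremum of $L_{N'}(\omega)/N'$ over that interval is at least $\sup_{N_j \le N' \le M_j} (M_j - N')/N'$, attained near $N'=N_j$, giving $(M_j-N_j)/N_j \ge L_{N_j}(\omega)/N_j \to \alpha$ — wait, that only recovers $\alpha$; the correct sharp choice is $N' \approx M_j \cdot \frac{1}{1+\alpha}$-type, and verifying that this lands in the admissible interval $[N_j, M_j]$ is exactly where the hypothesis $\alpha<1$ (equivalently $n(N_j)+L_{N_j} \le 2N_j$ roughly) is used. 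Pinning down this interplay cleanly is the crux of the argument.
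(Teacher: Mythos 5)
Your Step~1 inequality cannot drive the argument, for two reasons. First, it is frequently vacuous: the maximal block counted in $L_N(\omega)$ may end well before position $N$, in which case $N^{\ast}=n+L_N(\omega)\leq N$ and the admissible range $N\leq N'<N^{\ast}$ is empty. (Example: if a record block occupies positions $n_k+1,\dots,2n_k$ and the next, longer block only starts near $4n_k$, the scales $N$ realizing the $\liminf$ sit near $4n_k$, far past the end of the block.) Second, even when the range is nonempty, the bound $L_{N'}(\omega)\geq N^{\ast}-N'$ for $N'\geq N$ is weaker than trivial monotonicity $L_{N'}(\omega)\geq L_N(\omega)$, since $N^{\ast}-N'=n+L_N(\omega)-N'\leq L_N(\omega)$ when $N'\geq N\geq n$. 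Consequently the optimization over $N'\in[N_j,M_j]$ can never produce a ratio better than $L_{N_j}(\omega)/N_j\to\underline{L}(\omega)$ --- exactly the dead end you notice in your ``Main obstacle'' paragraph --- and the proposed rescue $N'\approx M_j/(1+\alpha)$ lies at or below $N_j$ (because $M_j\leq N_j+L_{N_j}(\omega)=(1+\alpha+o(1))N_j$), i.e.\ outside the interval on which your inequality was established.

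The missing idea is to run the comparison in the other direction and to use the $\liminf$ hypothesis at \emph{all} large scales, not only along the subsequence realizing it. Let $(n_k,m_k)$ enumerate the successive record blocks, i.e.\ $\omega_{n_k+1}=\cdots=\omega_{m_k}=0$, $\omega_{m_k+1}\neq0$, with $m_k-n_k$ strictly increasing; then $L_N(\omega)=m_k-n_k$ for all $n_k\leq N<n_{k+1}$, and $n_{k+1}\geq m_k$. Evaluating at $N=n_{k+1}-1\geq m_k-1$ (the block has ended, no longer block is yet visible) and using $L_N(\omega)\geq(\alpha-\epsilon)N$ for all large $N$, where $\alpha=\underline{L}(\omega)$, gives $m_k-n_k\geq(\alpha-\epsilon)(m_k-1)$, hence $n_k\leq(1-\alpha+\epsilon)m_k+O(1)$; evaluating at $N=n_k$ (the block has just become visible) gives $\overline{L}(\omega)\geq\limsup_k(m_k-n_k)/n_k=\limsup_k(m_k/n_k)-1\geq(1-\alpha+\epsilon)^{-1}-1$, and $\epsilon\to0$ yields \eqref{42}; when $\alpha=1$ one gets $n_k/m_k\to0$, hence $m_k/n_k\to\infty$ and \eqref{41}. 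This two-scale comparison attached to a single block --- its starting position $n_k$ versus its ending position $m_k$ --- is the whole content of the proposition, and it is what the paper formalizes via the record subsequence; your draft never introduces it.
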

\begin{proof}[\rm \textbf{Proof}]
By Proposition $\ref{211}$, we know that $\underline{L}(\omega)\leq1$ is equivalent to $\omega_{n}\neq0$ for infinitely many $n\in\mathbb{N}$.
Without loss of generality, we assume that $\underline{L}(\omega)>0$. Let $\mathcal{F}=\{n:\omega_{n+1}=0\}$. Since $\overline{L}(\omega)\geq\underline{L}(\omega)>0$, $\mathcal{F}$ is a infinite set. So we can write $\mathcal{F}=\{n'_{k}\}_{k=1}^{\infty}$, where $n'_{1}<n'_{2}<\cdots$. Since $\omega_{n}\neq0$ for infinitely many $n$, for every $k\geq1$, there exists $m'_{k}\geq n'_{k}+1$, satisfies
\begin{equation*}
\omega_{n'_{k}+1}=\cdots=\omega_{m'_{k}}=0,\omega_{m'_{k}+1}\neq0.
\end{equation*}
That is, $l_{n'_{k}}(\omega)=m'_{k}-n'_{k}$. In view of $\overline{L}(\omega)>0$, we have
\begin{equation}\label{43}
\limsup_{k\to\infty}l_{n'_{k}}(\omega)=+\infty.
\end{equation}
Now, we take the maximal subsequences $\{n_{k}\}_{k=1}^{\infty}$ and $\{m_{k}\}_{k=1}^{\infty}$ of $\{n'_{k}\}_{k=1}^{\infty}$ and $\{m'_{k}\}_{k=1}^{\infty}$, respectively, in such a way that the sequence $\{m_{k}-n_{k}\}$ is strictly increasing. More precisely, let $n_{1}=n'_{1}$ and $m_{1}=m'_{1}$. Let $k\geq1$ be such that $n_{k}=n'_{i_{k}}$ and $m_{k}=m'_{i_{k}}$ have been defined. Denote
\begin{equation*}
i_{k+1}=\min\{i>i_{k}:m'_{i}-n'_{i}>m_{k}-n_{k}\}.
\end{equation*}
Then, define
\begin{equation*}
n_{k+1}=n'_{i_{k+1}}\ {\rm and}\ m_{k+1}=m'_{i_{k+1}}.
\end{equation*}
By $\eqref{43}$, the sequence $\{i_{k}\}_{k=1}^{\infty}$ is well defined. Furthermore, by construction, we have
\begin{equation}\label{44}
\overline{L}(\omega)=\limsup_{k\to\infty}\frac{m_{k}-n_{k}}{n_{k}}=\limsup_{k\to\infty}\frac{m_{k}}{n_{k}}-1
\end{equation}
and
\begin{equation}\label{45}
\underline{L}(\omega)=\liminf_{k\to\infty}\frac{m_{k}-n_{k}}{n_{k+1}}\leq\liminf_{k\to\infty}\frac{m_{k}-n_{k}}{m_{k}}=1-\limsup_{k\to\infty}\frac{ n_{k}}{m_{k}}.
\end{equation}
If $\underline{L}(\omega)=1$, it follows from $\eqref{45}$ that $\limsup\limits_{k\to\infty}\frac{n_{k}}{m_{k}}=0$. Thus, $\limsup\limits_{k\to\infty}\frac{m_{k}}{n_{k}}=+\infty$.
It follows from $\eqref{44}$ that $\overline{L}(\omega)=+\infty$. If $\underline{L}(\omega)<1$, we assume that $\overline{L}(\omega)<+\infty$, otherwise, $\eqref{42}$ is trivial.
Then by $\eqref{44}$, we have $\limsup\limits_{k\to\infty}\frac{m_{k}}{n_{k}}<+\infty$. Hence,
\begin{equation}\label{46}
\limsup_{k\to\infty}\frac{m_{k}}{n_{k}}\cdot\limsup_{k\to\infty}\frac{ n_{k}}{m_{k}}\geq1.
\end{equation}
The upshot of $\eqref{44}$, $\eqref{45}$ and $\eqref{46}$ is that $(\overline{L}(\omega)+1)\cdot(1-\underline{L}(\omega))\geq1$.
It follows that $\overline{L}(\omega)\geq\frac{\underline{L}(\omega)}{1-\underline{L}(\omega)}$.
\textrm{}
\end{proof}
\begin{proof}[\rm \textbf{Proof of Lemma $\ref{55}$ (\rmnum{1})}]
For any $\omega\in\mathcal{A}^{\mathbb{N}}$ with $\underline{L}(\omega)\geq a$,
it follows from Propositions $\ref{211}$ and $\ref{51}$ that $\overline{L}(\omega)\geq\frac{a}{1-a}$. Thus, $\mathcal{U}_{\psi_{0}}(a,b)=\emptyset$ when
$a<1$ and $b<\frac{a}{1-a}$.
\textrm{}
\end{proof}
\subsection{Proof of Lemma $\ref{55}$ (\rmnum{2})}
\subsubsection{Upper bound}
In this section, we prove that $$\dim_{\rm H}\mathcal{U}_{\psi_{0}}(a,b)\leq\frac{b(1-a)-a}{(1+b)(b-a)}\dim_{\rm H}\Sigma,\ \forall\ a<1\ {\rm and}\ b\geq\frac{a}{1-a}.$$
\begin{proof}[\rm \textbf{Proof}]
An easy cover argument shows that $\dim_{\rm H}(\{\omega\in\Sigma:\overline{L}(\omega)=+\infty\})=0$.
It follows from Proposition $\ref{51}$ and Proposition $\ref{211}$ that $\mathcal{U}_{\psi_{0}}(1)\subset\{\omega\in\Sigma:\overline{L}(\omega)=+\infty\}$.
Thus, we can assume that $0<b<+\infty$ and we only need to consider the set $\mathcal{U}_{\psi_{0}}(a,b)\setminus\mathcal{U}_{\psi_{0}}(1)$.
For any $\omega\in\mathcal{U}_{\psi_{0}}(a,b)\setminus\mathcal{U}_{\psi_{0}}(1)$, that is, $\omega\in\Sigma$ with $a\leq\underline{L}(\omega)<1$ and $\overline{L}(\omega)=b>0$.
By the proof of Proposition $\ref{51}$, we know that there exists two strictly increasing sequences $\{n_{k}\}_{k=1}^{\infty}$, $\{m_{k}\}_{k=1}^{\infty}$
with $\{m_{k}-n_{k}\}_{k=1}^{\infty}$ is strictly increasing and
\begin{equation*}
\omega_{n_{k}+1}=\cdots=\omega_{m_{k}}=0,\omega_{m_{k}+1}\neq0
\end{equation*}
for each $k\in\mathbb{N}$. Furthermore,
\begin{equation}\label{48}
\limsup_{k\to\infty}\frac{m_{k}-n_{k}}{n_{k}}=\overline{L}(\omega)=b
\end{equation}
and
\begin{equation}\label{49}
\liminf_{k\to\infty}\frac{m_{k}-n_{k}}{n_{k+1}}=\underline{L}(\omega)\geq a.
\end{equation}
Take a subsequence $\{k_{j}\}_{j=1}^{\infty}$ along which the supremum of $\eqref{48}$ is obtained. For simplicity, we still denote $\{n_{k_{j}}\}_{j=1}^{\infty}$, $\{m_{k_{j}}\}_{j=1}^{\infty}$ by $\{n_{k}\}_{k=1}^{\infty}$, $\{m_{k}\}_{k=1}^{\infty}$. We remark that when passing to the subsequence, the first equality in $\eqref{49}$ becomes an inequality. Let $\epsilon\in(0,\frac{a}{2})$, there exists $K_{1}\in\mathbb{N}$, such that for all $k\geq K_{1}$, we have
\begin{equation}\label{410}
(b-\epsilon)n_{k}<m_{k}-n_{k}<(b+\epsilon)n_{k}
\end{equation}
and
\begin{equation}\label{411}
m_{k}-n_{k}>(a-\epsilon)n_{k+1}.
\end{equation}
Combining the second inequality of $\eqref{410}$ and $\eqref{411}$, we obtain that
\begin{equation}\label{412}
m_{k-1}-n_{k-1}>\frac{a-\epsilon}{b+\epsilon}(m_{k}-n_{k}),\ \forall\ k\geq K_{1}+1.
\end{equation}
Since $0<\frac{a-\epsilon}{b+\epsilon}<1$, for all $k$ large enough, we have $(\frac{a-\epsilon}{b+\epsilon})^{k-K_{1}}<\epsilon$. So when $k$ is large enough, the sum of all the lengths of the blocks of 0 in the prefix of length $n_{k}$ in the infinite sequence $\omega_{1}\omega_{2}\cdots$ is at least equal to
\begin{equation}\label{413}
\begin{aligned}
\sum_{j=K_{1}}^{k-1}(m_{j}-n_{j})&\geq\sum_{j=K_{1}}^{k-1}\left(\frac{a-\epsilon}{b+\epsilon}\right)^{k-j}(m_{k}-n_{k})>(b-\epsilon)n_{k}\sum_{j=K_{1}}^{k-1}\left(\frac{a-\epsilon}{b+\epsilon}\right)^{k-j}\\
&>(b-\epsilon)n_{k}\frac{a-\epsilon}{b-a+2\epsilon}(1-\epsilon)
=\left(\frac{ba}{b-a}-\epsilon'\right)n_{k},
\end{aligned}
\end{equation}
where $\epsilon'=\frac{ba}{b-a}-\frac{(b-\epsilon)(a-\epsilon)}{b-a+2\epsilon}+\epsilon\frac{(b-\epsilon)(a-\epsilon)}{b-a+2\epsilon}$.
The first inequality is due to $\eqref{412}$, the second inequality is due to the first inequality of $\eqref{410}$, the third inequality is due to the arithmetic series summation formula and $(\frac{a-\epsilon}{b+\epsilon})^{k-K_{1}}<\epsilon$.
It follows from the first inequality of $\eqref{410}$ that $m_{k}>(b-\epsilon+1)n_{k}\geq(b-\epsilon+1)m_{k-1}$. Thus, $\{m_{k}\}_{k=1}^{\infty}$ increases at least exponentially.
Since $n_{k}\geq m_{k-1}$, the sequence $\{n_{k}\}_{k=1}^{\infty}$ also increases at least exponentially. It follows that there exists $c_{1}>0$, such that
\begin{equation}\label{414}
k\leq c_{1}\log n_{k}, \quad \forall\ k\gg1.
\end{equation}

Now, for any $\delta>0$, let us construct a $\delta$-cover. Let $\Gamma_{n,\epsilon}$ be the set of all $\omega_{1}\cdots\omega_{\lfloor(b-\epsilon+1)n\rfloor}\in\Sigma^{\lfloor(b-\epsilon+1)n\rfloor}$ satisfies $\omega_{n+1}=\cdots=\omega_{\lfloor(b-\epsilon+1)n\rfloor}=0$ and there exists $i$ blocks of 0 of $\omega_{1}\cdots\omega_{n}$, with $i\leq c_{1}\log n$ and the total length of these $i$ blocks is at least equal to $\left(\frac{ba}{b-a}-\epsilon'\right)n$. In view of $\eqref{413}$, $\eqref{414}$ and the first inequality of $\eqref{410}$, for all $N$ large enough,
\begin{equation*}
\bigcup_{n=N}^{\infty}\bigcup_{\omega\in\Gamma_{n,\epsilon}}I_{\lfloor(b-\epsilon+1)n\rfloor}(\omega)
\end{equation*}
is a $\delta$-cover of the set $\mathcal{U}_{\psi_{0}}(a,b)\setminus\mathcal{U}_{\psi_{0}}(1)$. Thus, for any $s\geq0$, we have
\begin{equation}\label{415}
\mathcal{H}_{\delta}^{s}(\mathcal{U}_{\psi_{0}}(a,b)\setminus\mathcal{U}_{\psi_{0}}(1))\leq\sum_{n=N}^{\infty}\sum_{\omega\in\Gamma_{n,\epsilon}}|I_{\lfloor(b-\epsilon+1)n\rfloor}(\omega)|^{s}
\leq\sum_{n=N}^{\infty}m^{-(b-\epsilon+1)ns}\cdot\sharp\Gamma_{n,\epsilon}.
\end{equation}
So, we need to estimate $\sharp\Gamma_{n,\epsilon}$. Note that for any $i$ with $1\leq i\leq c_{1}\log n$ and every $l$ with $\left(\frac{ba}{b-a}-\epsilon'\right)n\leq l\leq n$, there are at most
\begin{equation*}
n(n-1)\cdots(n-i+1)\binom{l-1}{i-1}
\end{equation*}
choices of $i$ blocks of 0 of $\omega_{1}\cdots\omega_{n}$, with the total length of these $i$ blocks is equal to $l$.
Denote by $B=h(\Sigma):=\lim\limits_{n\to\infty}\frac{\log\sharp\Sigma^{n}}{n}$. We claim that the remaining $n-l$ digits has at most
\begin{equation*}
e^{\left(n-\left(\frac{ba}{b-a}-\epsilon'\right)n+iM\right)(B+\epsilon)}
\end{equation*}
choices. In fact, if $n-l<\log n$, then the choices of the remaining $n-l$ digits is at most equal to
\begin{equation*}
m^{n-l}<m^{\log n}<e^{\left(n-\left(\frac{ba}{b-a}-\epsilon'\right)n+iM\right)(B+\epsilon)}.
\end{equation*}
If $n-l\geq\log n$, since the remaining $n-l$ digits could be $i-1$ or $i$ or $i+1$ blocks of $\omega_{1}\cdots\omega_{n}$, we consider three cases.\\
Case (\Rmnum{1}). The remaining $n-l$ digits are $i-1$ blocks of $\omega_{1}\cdots\omega_{n}$, we assume that these $i-1$ blocks is $u_{1},\cdots,u_{i-1}$. Since $\Sigma$ has specification, there exists $v_{1},\cdots,v_{i-2}\in\Sigma^{M}$, such that
$u_{1}v_{1}\cdots u_{i-2}v_{i-2}u_{i-1}\in\Sigma^{n-l+(i-2)M}$. Furthermore, $v_{j}$ ($1\leq j\leq i-2$) is the minimal word (in the sense of lexicographical order) such that $u_{1}v_{1}\cdots u_{j}v_{j}u_{j+1}\in\Sigma^{\ast}$.\\
Case (\Rmnum{2}). The remaining $n-l$ digits are $i$ blocks of $\omega_{1}\cdots\omega_{n}$, we assume that these $i$ blocks is $u_{1},\cdots,u_{i}$. Since $\Sigma$ has specification, there exists $v_{1},\cdots,v_{i-1}\in\Sigma^{M}$, such that
$u_{1}v_{1}\cdots u_{i-1}v_{i-1}u_{i}\in\Sigma^{n-l+(i-1)M}$. Furthermore, $v_{j}$ ($1\leq j\leq i-1$) is the minimal word (in the sense of lexicographical order) such that $u_{1}v_{1}\cdots u_{j}v_{j}u_{j+1}\in\Sigma^{\ast}$.\\
Case (\Rmnum{3}). The remaining $n-l$ digits are $i+1$ blocks of $\omega_{1}\cdots\omega_{n}$, we assume that these $i+1$ blocks is $u_{1},\cdots,u_{i+1}$. Since $\Sigma$ has specification, there exists $v_{1},\cdots,v_{i}\in\Sigma^{M}$, such that
$u_{1}v_{1}\cdots u_{i}v_{i}u_{i+1}\in\Sigma^{n-l+iM}$. Furthermore, $v_{j}$ ($1\leq j\leq i$) is the minimal word (in the sense of lexicographical order) such that $u_{1}v_{1}\cdots u_{j}v_{j}u_{j+1}\in\Sigma^{\ast}$.\\
Thus, for all $n$ large enough, the choices of the remaining $n-l$ digits is at most equal to
\begin{equation}\label{416}
\max\left\{\sharp\Sigma^{n-l+(i-2)M},\sharp\Sigma^{n-l+(i-1)M},\sharp\Sigma^{n-l+iM}\right\}
<e^{(n-l+iM)(B+\epsilon)}
\leq e^{\left(n-\left(\frac{ba}{b-a}-\epsilon'\right)n+iM\right)(B+\epsilon)}.
\end{equation}
The first inequality of $\eqref{416}$ is due to $B=\lim\limits_{n\to\infty}\frac{\log\sharp\Sigma^{n}}{n}$ and $n-l\geq\log n$.
The second inequality of $\eqref{416}$ is due to $l\geq\left(\frac{ba}{b-a}-\epsilon'\right)n$. Therefore, $\sharp\Gamma_{n,\epsilon}$ is not larger than
\begin{equation*}
n^{2c_{1}\log n}c_{1}\log n\cdot e^{\left(n\left(\frac{b-a-ba}{b-a}+\epsilon'\right)+Mc_{1}\log n\right)(B+\epsilon)}.
\end{equation*}
Combining this and $\eqref{415}$, we have to consider the series
\begin{equation}\label{417}
\sum_{n=1}^{\infty}n^{2c_{1}\log n}c_{1}\log n\cdot e^{n(-(b-\epsilon+1)s\log m+\left(\frac{b-a-ba}{b-a}+\epsilon'\right)(B+\epsilon))+Mc_{1}\log n(B+\epsilon)}.
\end{equation}
The critical exponent $s_{0}$ such that $\eqref{417}$ converges if $s>s_{0}$ and diverges if $s<s_{0}$ is given by
\begin{equation*}
s_{0}=(b-\epsilon+1)^{-1}\left(\frac{b(1-a)-a}{b-a}+\epsilon'\right)\frac{B+\epsilon}{\log m}
\end{equation*}
It then follows from $\dim_{\rm H}\Sigma=\frac{h(\Sigma)}{\log m}$ ($\rm{\cite[Proposition\ \Rmnum{3}.1]{HF1967}}$) that
\begin{equation*}
\dim_{\rm H}\mathcal{U}_{\psi_{0}}(a,b)\leq\frac{b(1-a)-a}{(1+b)(b-a)}\dim_{\rm H}\Sigma.
\end{equation*}
Actually, we have proved that, for every $b\geq\frac{a}{1-a}$ and for every $\varrho>0$, we have
\begin{equation}\label{419}
\dim_{\rm H}(\{\omega\in\Sigma: \underline{L}(\omega)\geq a, b\leq\overline{L}(\omega)\leq b+\varrho\})
\leq\left(\frac{b(1-a)-a}{(1+b)(b-a)}+\frac{\varrho(1-a)^{2}}{a^{3}}\right)\dim_{\rm H}\Sigma.
\end{equation}
\textrm{}
\end{proof}
\subsubsection{Lower bound}
In this section, we show that
\begin{equation}\label{426}
\dim_{\rm H}\mathcal{U}_{\psi_{0}}^{\ast}(a,b)\geq\frac{b(1-a)-a}{(1+b)(b-a)}\dim_{\rm H}\Sigma,\ \forall\ a<1\ {\rm and}\ b\geq\frac{a}{1-a}.
\end{equation}
The following proposition enables us to consider only the measure of cylinders, which is helpful in the estimate of the lower bound of Hausdorff dimension.
For any $U\subset\mathcal{A}^{\mathbb{N}}$, let $|U|=\sup\{d(u,v):u,v\in U\}$, which is the diameter of $U$.
\begin{proposition}\label{29} \rm{(MDP)}
Let $E\subset\Sigma$ be a Borel measurable set and $\lambda$ be a Borel measure with $\lambda(E)>0$. If there exists some $s\in[0, \log m]$ such that for any $\omega\in E$,
\begin{equation*}
\liminf_{n\to\infty}\frac{\log\lambda(I_{n}(\omega))}{\log|I_{n}(\omega)|}\geq s,
\end{equation*}
then $\dim_{\rm H}E\geq s$.
\end{proposition}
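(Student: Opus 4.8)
The plan is to run the classical mass distribution principle (Billingsley's lemma) in the cylinder structure of $\Sigma$. Fix $t$ with $0\le t<s$; it suffices to show $\mathcal{H}^{t}(E)>0$, for then $\dim_{\rm H}E\ge t$ and we finish by letting $t\uparrow s$. Since $\log|I_{n}(\omega)|<0$ for all $n$, the hypothesis $\liminf_{n}\frac{\log\lambda(I_{n}(\omega))}{\log|I_{n}(\omega)|}\ge s>t$ says exactly that for each $\omega\in E$ there is $N(\omega)\in\mathbb{N}$ with $\lambda(I_{n}(\omega))\le|I_{n}(\omega)|^{t}$ for all $n\ge N(\omega)$.

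First I would make this threshold uniform. Put $E_{k}=\{\omega\in E:\lambda(I_{n}(\omega))\le|I_{n}(\omega)|^{t}\ \text{for every}\ n\ge k\}$. Since $\omega\mapsto\lambda(I_{n}(\omega))$ is constant on each $n$-cylinder, every $E_{k}$ is Borel; moreover $E_{k}\uparrow E$, so continuity from below of $\lambda$ and $\lambda(E)>0$ give some $k_{0}$ with $c:=\lambda(E_{k_{0}})>0$. Next comes the only metric ingredient, which is elementary because $d$ is an ultrametric: if $U\subset\Sigma$ with $|U|=m^{-k}>0$, then a pair realizing the diameter first differs at coordinate $k$ and every other pair is no farther, so all points of $U$ agree on their first $k-1$ coordinates; hence $U\subset I_{k-1}(\omega)$ for any $\omega\in U$ and $|I_{k-1}(\omega)|=m^{-k}=|U|$, while a singleton lies in a cylinder of arbitrarily small diameter. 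Consequently $\mathcal{H}^{t}(E_{k_{0}})$ may be computed using covers by cylinders alone.

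So let $\{I_{n_{i}}(\omega_{i})\}_{i}$ be any cover of $E_{k_{0}}$ by cylinders of diameter at most $m^{-(k_{0}+1)}$, i.e. with every $n_{i}\ge k_{0}$. Discard the cylinders disjoint from $E_{k_{0}}$; the remaining ones still cover $E_{k_{0}}$, and for each of them we may choose $\omega_{i}\in E_{k_{0}}$, so that the defining inequality of $E_{k_{0}}$ applies at level $n=n_{i}\ge k_{0}$ and $\lambda(I_{n_{i}}(\omega_{i}))\le|I_{n_{i}}(\omega_{i})|^{t}$. Then
\[
\sum_{i}|I_{n_{i}}(\omega_{i})|^{t}\ \ge\ \sum_{i}\lambda(I_{n_{i}}(\omega_{i}))\ \ge\ \lambda\Big(\bigcup_{i}I_{n_{i}}(\omega_{i})\Big)\ \ge\ \lambda(E_{k_{0}})\ =\ c .
\]
Hence $\mathcal{H}^{t}_{\delta}(E_{k_{0}})\ge c$ for all sufficiently small $\delta$, so $\mathcal{H}^{t}(E)\ge\mathcal{H}^{t}(E_{k_{0}})\ge c>0$, and letting $t\uparrow s$ yields $\dim_{\rm H}E\ge s$.

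The argument uses nothing about the bound $s\le\log m$ in the statement. I do not expect a genuine obstacle here; the only points that need a little care are the measurability and uniformization of the threshold $N(\omega)$ through the sets $E_{k}$, and the reduction of arbitrary covers to cylinder covers — and in the present ultrametric the latter costs nothing in the diameters.
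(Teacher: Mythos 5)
Your proof is correct. The paper takes a shorter route to the same conclusion: it observes that in this ultrametric every ball $B(\omega,r)$ with $m^{-n-1}\le r<m^{-n}$ coincides with the cylinder $I_{n}(\omega)$, uses $2r\ge|I_{n}(\omega)|$ to convert the hypothesis into $\liminf_{r\to0}\log\lambda(B(\omega,r))/\log r\ge s$ for every $\omega\in E$, and then simply cites the ball-based mass distribution principle \cite[Proposition~2.3]{FK1997}. You instead reprove that principle from scratch in the symbolic setting: the uniformization of the threshold via the sets $E_{k}$ and the reduction of arbitrary covers to cylinder covers are precisely the two steps hidden inside the cited result, and your observation that any $U$ with $|U|=m^{-k}>0$ is contained in a cylinder of the same diameter is the ultrametric counterpart of the ball-counting step in Falconer's proof. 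Both arguments rest on the same geometric fact (cylinders are balls here); yours is self-contained where the paper delegates. One small imprecision: ``diameter at most $m^{-(k_{0}+1)}$'' is not literally equivalent to ``$n_{i}\ge k_{0}$,'' since a shallow cylinder can be degenerate and hence have small diameter; but such a cylinder coincides, as a set, with $I_{k_{0}}(\omega_{i})$ for any $\omega_{i}$ in it, so the defining inequality of $E_{k_{0}}$ still applies and nothing breaks. You are also right that the restriction $s\le\log m$ plays no role in either argument.
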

\begin{proof}[\rm \textbf{Proof}]
The proof is standard, we give the proof here for completeness.
For any $\omega\in E$ and $r\in(0,m^{-1})$, there exists $n\geq1$ such that $m^{-n-1}\leq r<m^{-n}$, hence
$$I_{n}(\omega)=B(\omega,r).$$ Therefore,
\begin{equation*}
\frac{\log\lambda(B(\omega,r))}{\log 2r}=\frac{-\log\lambda(B(\omega,r))}{-\log 2r}=\frac{-\log\lambda(I_{n}(\omega))}{-\log 2r}\geq\frac{-\log\lambda(I_{n}(\omega))}{-\log|I_{n}(\omega)|}=\frac{\log\lambda(I_{n}(\omega))}{\log|I_{n}(\omega)|}.
\end{equation*}
Thus,
\begin{equation*}
\liminf_{r\to0}\frac{\log\lambda(B(\omega,r))}{\log r}\geq\liminf_{n\to\infty}\frac{\log\lambda(I_{n}(\omega))}{\log|I_{n}(\omega)|}\geq s.
\end{equation*}
By \cite[Proposition 2.3]{FK1997}, we have that $\dim_{\rm H}E\geq s$.
\textrm{}
\end{proof}
The following statement shows that if $\Sigma$ has specification and $h(\Sigma)>0$, then the diameter of of $n$th cylinder is comparable to $m^{-n}$.
\begin{proposition}\label{210}
Suppose that $\Sigma$ has specification and $h(\Sigma)>0$. There exists $c^{\ast}>0$, which only depend on $M$, such that for any $\omega\in\Sigma$ and any $n\in\mathbb{N}$,
\begin{equation*}
c^{\ast}m^{-n}\leq|I_{n}(\omega)|\leq m^{-n-1}.
\end{equation*}
\end{proposition}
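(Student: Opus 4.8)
The plan is to establish the two inequalities separately: the upper bound is a purely formal consequence of the ultrametric structure of $d$, while the specification hypothesis (and the positivity of the entropy) is needed only for the lower bound.

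\emph{Upper bound.} First I would note that any two distinct points $u,v\in I_{n}(\omega)$ satisfy $u_{i}=v_{i}$ for $1\le i\le n$, so $\min\{i:u_{i}\ne v_{i}\}\ge n+1$ and hence $d(u,v)=m^{-\min\{i:u_{i}\ne v_{i}\}}\le m^{-n-1}$. Taking the supremum over $u,v\in I_{n}(\omega)$ (the bound being trivial if the cylinder is a singleton) gives $|I_{n}(\omega)|\le m^{-n-1}$. No hypothesis on $\Sigma$ is used here.

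\emph{Lower bound.} The first step is to observe that $h(\Sigma)>0$ forces $\sharp\Sigma^{1}\ge 2$: if only one letter $a$ occurred in the first coordinate of points of $\Sigma$, then since $\sigma(\Sigma)\subseteq\Sigma$ every coordinate of every point of $\Sigma$ would equal $a$, so $\Sigma=\{a^{\infty}\}$ and $h(\Sigma)=0$, a contradiction. Fix two distinct one-letter words $p,q\in\Sigma^{1}\subseteq\Sigma^{\ast}$. Now fix $\omega\in\Sigma$ and $n\ge1$; then $\omega_{1}\cdots\omega_{n}\in\Sigma^{n}\subseteq\Sigma^{\ast}$. Applying the specification property to the pairs $(\omega_{1}\cdots\omega_{n},\,p)$ and $(\omega_{1}\cdots\omega_{n},\,q)$, I obtain words $w,w'\in\Sigma^{M}$ with $\omega_{1}\cdots\omega_{n}\,w\,p\in\Sigma^{\ast}$ and $\omega_{1}\cdots\omega_{n}\,w'\,q\in\Sigma^{\ast}$; by the definition of $\Sigma^{\ast}$ these finite words are prefixes of points $x,y\in\Sigma$, respectively. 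Both $x$ and $y$ lie in $I_{n}(\omega)$ since they agree with $\omega$ on the first $n$ coordinates, while $x_{n+M+1}=p\ne q=y_{n+M+1}$; hence $x\ne y$ and $\min\{i:x_{i}\ne y_{i}\}\le n+M+1$. Therefore
\[
|I_{n}(\omega)|\ge d(x,y)=m^{-\min\{i:x_{i}\ne y_{i}\}}\ge m^{-(n+M+1)}=m^{-M-1}\,m^{-n},
\]
so one may take $c^{\ast}=m^{-M-1}$, which depends only on $M$ (the alphabet size $m$ being a fixed parameter).

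I do not anticipate a real obstacle. The only points that require care are: (a) the order in which specification is applied, since the two constructed points must first agree on the prefix $\omega_{1}\cdots\omega_{n}$ and only then branch into two distinct letters, which costs exactly $M+1$ further coordinates and yields the constant $m^{-M-1}$; and (b) the remark that positivity of the topological entropy rules out the degenerate case $\Sigma=\{a^{\infty}\}$, so that two distinct letters really are available to branch into.
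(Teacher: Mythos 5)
Your proof is correct, and the lower bound is obtained by a genuinely different (and simpler) route than the paper's. The paper also connects the prefix $\omega_{[1,n]}$ to two distinct continuations via specification, but it insists on using the \emph{same} connecting word $\varepsilon\in\Sigma^{M}$ for both: to get this it first chooses $K$ with $\sharp\Sigma^{K}>m^{M}\geq\sharp\Sigma^{M}$ (using $h(\Sigma)>0$) and applies the pigeonhole principle to find $v_{1}\neq v_{2}\in\Sigma^{K}$ sharing one $\varepsilon$, which forces the first disagreement to lie in positions $n+M+1,\dots,n+M+K$ and yields $|I_{n}(\omega)|\geq m^{-n-M-K}$. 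Your observation that a common connecting word is unnecessary --- because an earlier disagreement inside the connecting blocks only \emph{increases} the distance, and the planted letters $p\neq q$ at position $n+M+1$ guarantee the first disagreement occurs no later than there --- removes the pigeonhole step entirely and uses $h(\Sigma)>0$ only to secure $\sharp\Sigma^{1}\geq2$. As a bonus, your constant $c^{\ast}=m^{-M-1}$ really does depend only on $M$ (and the fixed alphabet size $m$), whereas the paper's constant $m^{-M-K}$ also involves the auxiliary integer $K$, which depends on $\Sigma$; so your argument actually matches the literal wording of the statement more closely. The upper bound is handled identically in both arguments.
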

\begin{proof}[\rm \textbf{Proof}]
By the definition of $I_{n}(\omega)$, we immediately obtain that $|I_{n}(\omega)|\leq m^{-n-1}$. Since $\Sigma$ has specification,
for any $\omega_{1},\omega_{2}\in\Sigma^{\ast}$, there exists $\varepsilon\in\Sigma^{M}$, such that $\omega_{1}\varepsilon\omega_{2}\in\Sigma^{\ast}$.
Because of $$h(\Sigma):=\lim\limits_{n\to\infty}\frac{\log\sharp\Sigma^{n}}{n}>0,$$ there exists $K\in\mathbb{N}$, such that $\sharp\Sigma^{K}>m^{M}\geq\sharp\Sigma^{M}$.
By pigeon-hole principle, there exists $v_{1}\neq v_{2}\in\Sigma^{K}$ and $\varepsilon\in\Sigma^{M}$, satisfies
$\omega_{[1,n]}\varepsilon v_{1},\omega_{[1,n]}\varepsilon v_{2}\in\Sigma^{\ast}$. Note that for any $x\in I_{n+M+K}(\omega_{[1,n]}\varepsilon v_{1})$ and
any $y\in I_{n+M+K}(\omega_{[1,n]}\varepsilon v_{2})$, we have $d(x,y)\geq m^{-n-M-K}$. Thus, $|I_{n}(\omega)|\geq m^{-n-M-K}$.
\textrm{}
\end{proof}
Now, we start proving $\eqref{426}$.
\begin{proof}[\rm \textbf{Proof}]
If $0<a<1$ and $b=\frac{a}{1-a}$, the inequality is trivial. If $0<a<1$ and $b>\frac{a}{1-a}$, we choose an integer $k_{0}$ with $(a^{-1}b)^{k_{0}}>\max\left\{\frac{a(2+b)}{b(1-a)-a},\frac{a(1+b)}{b(b-a)}\right\}$
and let $n_{k}=\left\lfloor (a^{-1}b)^{k+k_{0}}\right\rfloor+1$, $m_{k}=\lfloor(1+b)n_{k}\rfloor+1$. If $a=0$, we choose integers $k_{0}\geq b+1$ and $n_{1}\geq4(\sqrt{2}+1)^{2}$.
Let $n_{k+1}=(k+k_{0}+1)n_{k}$ and $m_{k}=\lfloor(1+b)n_{k}\rfloor+\lfloor\sqrt{n_{k}}\rfloor+1$. It is easy to check that $m_{k}<n_{k+1}$ and $\{m_{k}-n_{k}\}_{k=1}^{\infty}$ is strictly increasing. Furthermore,
$$\lim_{k\to\infty}\frac{m_{k}-n_{k}}{n_{k+1}}=a$$ and $$\lim_{k\to\infty}\frac{m_{k}-n_{k}}{n_{k}}=b.$$

Let $t_{k}$ be the largest integer such that $m_{k}+t_{k}(m_{k}-n_{k})<n_{k+1}$. Since $B>0$, we have $\Sigma^{1}\setminus\{0\}\neq\emptyset$. Fix $\omega\in\Sigma^{1}\setminus\{0\}$, for any $u_{k}^{(1)},\cdots,u_{k}^{(t_{k})}\in\Sigma^{m_{k}-n_{k}-2M-1}$ and any $v_{k}\in\Sigma^{n_{k+1}-m_{k}-t_{k}(m_{k}-n_{k})}$, since $\Sigma$ has specification, there exists $\omega_{k}^{(1)},\cdots,\omega_{k}^{(2t_{k}+3)}\in\Sigma^{M}$, such that
\begin{equation*}
\omega\omega_{k}^{(1)}0^{m_{k}-n_{k}-2M-1}\omega_{k}^{(2)}\omega\omega_{k}^{(3)}u_{k}^{(1)}\omega_{k}^{(4)}\cdots\omega\omega_{k}^{(2t_{k}+1)}u_{k}^{(t_{k})}\omega_{k}^{(2t_{k}+2)}\omega\omega_{k}^{(2t_{k}+3)}v_{k}\in\Sigma^{\ast}.
\end{equation*}
Furthermore, we choose $\omega_{k}^{(1)},\cdots,\omega_{k}^{(2t_{k}+3)}\in\Sigma^{M}$ that satisfies
\begin{enumerate}[(1)]
\item $\omega_{k}^{(1)}$ is the minimal word (in the sense of lexicographical order) such that\\ $\omega\omega_{k}^{(1)}0^{m_{k}-n_{k}-2M-1}\in\Sigma^{\ast}$;
\item For any $1\leq i\leq t_{k}+1$, $\omega_{k}^{(2i)}$ is the minimal word (in the sense of lexicographical order) such that
$\omega\omega_{k}^{(1)}0^{m_{k}-n_{k}-2M-1}\cdots\omega_{k}^{(2i)}\omega\in\Sigma^{\ast}$;
\item For any $1\leq i\leq t_{k}$, $\omega_{k}^{(2i+1)}$ is the minimal word (in the sense of lexicographical order) such that
$\omega\omega_{k}^{(1)}0^{m_{k}-n_{k}-2M-1}\cdots\omega\omega_{k}^{(2i+1)}u_{k}^{(i)}\in\Sigma^{\ast}$;
\item $\omega_{k}^{(2t_{k}+3)}$ is the minimal word (in the sense of lexicographical order) such that \\
$\omega\omega_{k}^{(1)}0^{m_{k}-n_{k}-2M-1}\omega_{k}^{(2)}\cdots\omega\omega_{k}^{(2t_{k}+1)}u_{k}^{(t_{k})}\omega_{k}^{(2t_{k}+2)}\omega\omega_{k}^{(2t_{k}+3)}v_{k}\in\Sigma^{\ast}$.
\end{enumerate}
According to the above rules, for every $u_{k}^{(1)},\cdots,u_{k}^{(t_{k})}\in\Sigma^{m_{k}-n_{k}-2M-1}$ and every $v_{k}\in\Sigma^{n_{k+1}-m_{k}-t_{k}(m_{k}-n_{k})}$, the choice of $\omega_{k}^{(1)},\cdots,\omega_{k}^{(2t_{k}+3)}\in\Sigma^{M}$ is determined, we put
\begin{equation*}
\begin{aligned}
\mathcal{W}_{k}=\left\{\omega\omega_{k}^{(1)}0^{m_{k}-n_{k}-2M-1}\omega_{k}^{(2)}\cdots\omega\omega_{k}^{(2t_{k}+1)}u_{k}^{(t_{k})}\omega_{k}^{(2t_{k}+2)}\omega\omega_{k}^{(2t_{k}+3)}v_{k}\in\Sigma^{\ast}:\right.\\
\left. v_{k}\in\Sigma^{n_{k+1}-m_{k}-t_{k}(m_{k}-n_{k})},u_{k}^{(i)}\in\Sigma^{m_{k}-n_{k}-2M-1},\ \forall 1\leq i\leq t_{k}\right\}.
\end{aligned}
\end{equation*}
Note that every element of $\mathcal{W}_{k}$ is of length $n_{k+1}-n_{k}+M+1$.
Again, since $\Sigma$ has specification, for any $k\geq1$ and any $u_{i}\in\mathcal{W}_{i}$ $(1\leq i\leq k)$, there exists $\varepsilon_{1},\cdots,\varepsilon_{k}\in\Sigma^{M}$, such that $0^{n_{1}}\varepsilon_{1}u_{1}\varepsilon_{2}u_{2}\cdots\varepsilon_{k}u_{k}\in\Sigma^{\ast}$.
Here we choose $\varepsilon_{1},\cdots,\varepsilon_{k}\in\Sigma^{M}$, which satisfies for any $1\leq i\leq k$, $\varepsilon_{i}$ is the minimal word (in the sense of lexicographic order) such that $0^{n_{1}}\varepsilon_{1}u_{1}\cdots\varepsilon_{i}u_{i}\in\Sigma^{\ast}$. Let
\begin{equation*}
G_{k}=\{0^{n_{1}}\varepsilon_{1}u_{1}\cdots\varepsilon_{k}u_{k}\in\Sigma^{\ast}:u_{i}\in\mathcal{W}_{i},\forall\ 1\leq i\leq k\}.
\end{equation*}
Note that every element of $G_{k}$ is of length $n_{k+1}+(2M+1)k$. Define
\begin{equation*}
E_{0}=I_{n_{1}}(0^{n_{1}})
\end{equation*}
and
\begin{equation*}
E_{k}=\bigcup_{w\in G_{k}}I_{n_{k+1}+(2M+1)k}(w)
\end{equation*}
for each $k\geq1$. Put
\begin{equation*}
E=\bigcap_{k=0}^{\infty}E_{k},
\end{equation*}
we show that $E\subset\mathcal{U}_{\psi_{0}}^{\ast}(a,b)$ and $\dim_{\rm H}E\geq\frac{b(1-a)-a}{(1+b)(b-a)}\dim_{\rm H}\Sigma$. Denote $N_{k}:=n_{k+1}+(2M+1)k$.
For any $\omega\in E$, we prove that $\underline{L}(\omega)=a$ and $\overline{L}(\omega)=b$. Since the element of $E$ are of the form $0^{n_{1}}\varepsilon_{1}u_{1}\cdots\varepsilon_{k}u_{k}\cdots$ with $u_{k}\in\mathcal{W}_{k}$ $(k\geq1)$ and $\varepsilon_{k}\in\Sigma^{M}$ is the minimal word such that $0^{n_{1}}\varepsilon_{1}u_{1}\cdots\varepsilon_{k}u_{k}\in\Sigma^{\ast}$, by the definition of $\mathcal{W}_{k}$, we have the following two facts:\\
(\rmnum{1}) If $N_{k}\leq N\leq N_{k}+2M$, then
\begin{equation*}
m_{k}-n_{k}-2M-1\leq L_{N}(\omega)\leq m_{k}-n_{k}+2M.
\end{equation*}
(\rmnum{2}) If $N_{k}+2M<N<N_{k+1}$, then
\begin{equation*}
m_{k+1}-n_{k+1}-2M-1\leq L_{N}(\omega)\leq m_{k+1}-n_{k+1}+2M.
\end{equation*}
Thus, we have
\begin{equation*}
\underline{L}(\omega):=\liminf_{N\to\infty}\frac{L_{N}(\omega)}{N}\geq\liminf_{k\to\infty}\frac{m_{k}-n_{k}}{N_{k}}=\lim_{k\to\infty}\frac{m_{k}-n_{k}}{n_{k+1}}=a
\end{equation*}
and
\begin{equation*}
\overline{L}(\omega):=\limsup_{N\to\infty}\frac{L_{N}(\omega)}{N}\leq\limsup_{k\to\infty}\frac{m_{k+1}-n_{k+1}}{N_{k}}=\lim_{k\to\infty}\frac{m_{k+1}-n_{k+1}}{n_{k+1}}=b. \end{equation*}
Note that
\begin{equation*}
\lim_{k\to\infty}\frac{L_{N_{k}}(\omega)}{N_{k}}=\lim_{k\to\infty}\frac{m_{k}-n_{k}}{n_{k+1}}=a
\end{equation*}
and
\begin{equation*}
\lim_{k\to\infty}\frac{L_{N_{k}+2M+1}(\omega)}{N_{k}+2M+1}=\lim_{k\to\infty}\frac{m_{k+1}-n_{k+1}}{n_{k+1}}=b.
\end{equation*}
Therefore, $\underline{L}(\omega)=a$ and $\overline{L}(\omega)=b$. It remain to prove that $\dim_{\rm H}E\geq\frac{b(1-a)-a}{(1+b)(b-a)}\dim_{\rm H}\Sigma$. Firstly, we assign a mass distribution on $E$. We first assign a mass distribution on all cylinders. Let $\lambda(I_{N}(0^{N}))=1$ for any $1\leq N\leq n_{1}$. For any cylinder $I_{N_{1}}\in E_{1}$, we define
\begin{equation*}
\lambda(I_{N_{1}})=\frac{1}{\sharp\mathcal{W}_{1}}\lambda(I_{n_{1}}(0^{n_{1}}))=\frac{1}{\sharp\mathcal{W}_{1}}.
\end{equation*}
Suppose that the measure on $I_{N_{k}}\in E_{k}$ $(k\geq1)$ is well defined, then the measure on $I_{N_{k+1}}\in E_{k+1}$ is defined as
\begin{equation*}
\lambda(I_{N_{k+1}})=\frac{1}{\sharp\mathcal{W}_{k+1}}\lambda(I_{N_{k}})=\prod_{i=1}^{k+1}\frac{1}{\sharp\mathcal{W}_{i}}.
\end{equation*}
For any $n_{1}<N<N_{1}$, define
\begin{equation*}
\lambda(I_{N})=\sum_{I_{N_{1}}:I_{N_{1}}\cap I_{N}\neq\emptyset}\lambda(I_{N_{1}}).
\end{equation*}
For any $N_{k}<N<N_{k+1}$, define
\begin{equation*}
\lambda(I_{N})=\sum_{I_{N_{k+1}}:I_{N_{k+1}}\cap I_{N}\neq\emptyset}\lambda(I_{N_{k+1}}).
\end{equation*}
Note that
\begin{equation*}
\lambda(I_{N})=\sum_{I_{N+1}:I_{N+1}\cap I_{N}\neq\emptyset}\lambda(I_{N+1}),
\end{equation*}
thus, $\lambda$ can be extended to a measure on $E$. Secondly, in view of Proposition $\ref{29}$, we only need to show that for any $\omega\in E$,
\begin{equation*}
\liminf_{N\to\infty}\frac{\log\lambda(I_{N}(\omega))}{\log|I_{N}(\omega)|}\geq\frac{b(1-a)-a}{(1+b)(b-a)}\dim_{\rm H}\Sigma,
\end{equation*}
where $I_{N}(\omega)$ is the $N$-th cylinder contain $\omega$. It follows from Proposition $\ref{210}$ that
\begin{equation}\label{420}
\liminf_{N\to\infty}\frac{\log\lambda(I_{N}(\omega))}{\log|I_{N}(\omega)|}=\liminf_{N\to\infty}\frac{-\log\lambda(I_{N}(\omega))}{N\log m}.
\end{equation}
If $N_{k}\leq N\leq M_{k}:=m_{k+1}+(2M+1)(k+1)$, then $\lambda(I_{N}(\omega))=\lambda(I_{N_{k}}(\omega))=\prod_{i=1}^{k}\frac{1}{\sharp\mathcal{W}_{i}}$. Thus,
\begin{equation}\label{421}
\begin{aligned}
\frac{-\log\lambda(I_{N}(\omega))}{N}\geq\frac{\sum_{i=1}^{k}\log\sharp\mathcal{W}_{i}}{M_{k}}.
\end{aligned}
\end{equation}
Note that
\begin{equation}\label{422}
\liminf_{k\to\infty}\frac{\sum_{i=1}^{k}\log\sharp\mathcal{W}_{i}}{M_{k}}=\liminf_{k\to\infty}\frac{\sum_{i=1}^{k}\log\sharp\mathcal{W}_{i}}{m_{k+1}}
\geq\liminf_{k\to\infty}\frac{\log\sharp\mathcal{W}_{k}}{m_{k+1}-m_{k}}.
\end{equation}
The equality follows from $\lim\limits_{k\to\infty}\frac{k}{m_{k}}=0$. The inequality follows from $\lim\limits_{k\to\infty}m_{k}=+\infty$ and the definition of limit inferior.
By the definition of $\mathcal{W}_{k}$, we have that
\begin{equation*}
\sharp\mathcal{W}_{k}=\left(\sharp\Sigma^{m_{k}-n_{k}-2M-1}\right)^{t_{k}}\cdot\sharp\Sigma^{n_{k+1}-m_{k}-t_{k}(m_{k}-n_{k})}.
\end{equation*}
Since $B=\lim\limits_{n\to\infty}\frac{\log\sharp\Sigma^{n}}{n}$, we obtain that $\lim\limits_{k\to\infty}\frac{\log\sharp\mathcal{W}_{k}}{n_{k+1}-m_{k}}=B$. Hence,
\begin{equation}\label{423}
\lim_{k\to\infty}\frac{\log\sharp\mathcal{W}_{k}}{m_{k+1}-m_{k}}=B\cdot\lim_{k\to\infty}\frac{n_{k+1}-m_{k}}{m_{k+1}-m_{k}}=\frac{b(1-a)-a}{(1+b)(b-a)}h(\Sigma). \end{equation}
The last equality follows from our choice of sequences $\{n_{k}\}$, $\{m_{k}\}$. For any $\delta>0$, the combination of $\eqref{421}$, $\eqref{422}$ and $\eqref{423}$ gives
\begin{equation}\label{424}
\frac{-\log\lambda(I_{N}(\omega))}{N\log m}>(\log m)^{-1}\left(\frac{b(1-a)-a}{(1+b)(b-a)}h(\Sigma)-\delta\right)
\end{equation}
for all $N$ large enough with $N_{k}\leq N\leq M_{k}$. \\ Recall that $N_{k+1}=n_{k+2}+(2M+1)(k+1)$ and $M_{k}=m_{k+1}+(2M+1)(k+1)$.
Thus, $$N_{k+1}=M_{k}+n_{k+2}-m_{k+1}\leq M_{k}+(t_{k+1}+1)(m_{k+1}-n_{k+1}).$$ If $M_{k}<N<N_{k+1}$, then $M_{k}<N<M_{k}+(t_{k+1}+1)(m_{k+1}-n_{k+1})$.
So there exists $0\leq q\leq t_{k+1}$ and $0\leq r<m_{k+1}-n_{k+1}$, such that $N=M_{k}+q(m_{k+1}-n_{k+1})+r$. \\
\textbf{Case 1}. If $r\leq M$, then $N\leq M_{k}+q(m_{k+1}-n_{k+1})+M$. So
\begin{equation*}
\begin{aligned}
\lambda(I_{N}(\omega))&\leq\lambda(I_{M_{k}+q(m_{k+1}-n_{k+1})}(\omega))\\
&=\left(\sharp\Sigma^{m_{k+1}-n_{k+1}-2M-1}\right)^{-q}\lambda(I_{M_{k}}(\omega)).
\end{aligned}
\end{equation*}
Thus,
\begin{equation*}
\frac{-\log\lambda(I_{N}(\omega))}{N}\geq\frac{-\log\lambda(I_{ M_{k}}(\omega))+q\log\sharp\Sigma^{m_{k}-n_{k}-2M-1}}{M_{k}+q(m_{k+1}-n_{k+1})+M}.
\end{equation*}
In view of $\eqref{423}$, $\lim\limits_{k\to\infty}m_{k}=+\infty$ and the definition of limit superior, we have
\begin{equation*}
\begin{aligned}
\limsup_{k\to\infty}\frac{-\log\lambda(I_{M_{k}}(\omega))}{M_{k}}&=\limsup_{k\to\infty}\frac{\sum_{i=1}^{k}\log\sharp\mathcal{W}_{i}}{m_{k+1}}
\leq\limsup_{k\to\infty}\frac{\log\sharp\mathcal{W}_{k}}{m_{k+1}-m_{k}}\\&=\lim_{k\to\infty}\frac{\log\sharp\mathcal{W}_{k}}{m_{k+1}-m_{k}}=\frac{b(1-a)-a}{(1+b)(b-a)}B
\leq B.
\end{aligned}
\end{equation*}
Thus, for every sufficiently small $\delta>0$, for all $k$ large enough, we have
\begin{equation*}
(B+\delta)^{-1}(-\log\lambda(I_{M_{k}}(\omega)))<M_{k}
\end{equation*}
This together with $B=\lim\limits_{n\to\infty}\frac{\log\sharp\Sigma^{n}}{n}$ gives
\begin{equation*}
\begin{aligned}
\frac{-\log\lambda(I_{N}(\omega))}{N}&\geq\frac{-\log\lambda(I_{M_{k}}(\omega))+(B-\delta)q(m_{k+1}-n_{k+1})}{ M_{k}+ q(m_{k+1}-n_{k+1})+M}\\&\geq\frac{B-\delta}{1+\delta}\cdot\frac{(B+\delta)^{-1}(-\log\lambda(I_{M_{k}}(\omega)))+q(m_{k+1}-n_{k+1})}{ M_{k}+q(m_{k+1}-n_{k+1})}\\ &\geq\frac{B-\delta}{(B+\delta)(1+\delta)}\cdot\frac{-\log\lambda(I_{M_{k}}(\omega))}{M_{k}}\\
&>\frac{B-\delta}{(B+\delta)(1+\delta)}\left(\frac{b(1-a)-a}{(1+b)(b-a)}h(\Sigma)-\delta\right)
\end{aligned}
\end{equation*}
for all $N$ large enough. The third inequality follows from the inequality
\begin{equation*}
\frac{y+s}{z+s}\geq\frac{y}{z},0\leq y<z,s\geq0.
\end{equation*}
The last inequality follows from $\eqref{424}$.\\
\textbf{Case 2}. If $0\leq q\leq t_{k+1}-1$ and $r\geq m_{k+1}-n_{k+1}-2M-1$, we have
\begin{equation*}
\lambda(I_{N}(\omega))=\left(\sharp\Sigma^{m_{k+1}-n_{k+1}-2M-1}\right)^{-(q+1)}\lambda(I_{M_{k}}(\omega)).
\end{equation*}
Thus,
\begin{equation*}
\frac{-\log\lambda(I_{N}(\omega))}{N}>\frac{-\log\lambda(I_{ M_{k}}(\omega))+(q+1)\log\sharp\Sigma^{m_{k+1}-n_{k+1}-2M-1}}{M_{k}+(q+1)(m_{k+1}-n_{k+1})}.
\end{equation*}
Similar with \textbf{Case 1}, for every sufficiently small positive number $\delta$, for $N$ large enough, we have
\begin{equation*}
\frac{-\log\lambda(I_{N}(\omega))}{N}>\frac{B-\delta}{B+\delta}\cdot\left(\frac{b(1-a)-a}{(1+b)(b-a)}h(\Sigma)-\delta\right).
\end{equation*}
\textbf{Case 3}. If $0\leq q\leq t_{k+1}-1$ and $M<r<m_{k+1}-n_{k+1}-2M-1$, then
\begin{equation*}
\begin{aligned}
\lambda(I_{N}(\omega))
&\leq\lambda(I_{M_{k}}(\omega))\cdot(\sharp\Sigma^{m_{k+1}-n_{k+1}-2M-1})^{-q-1}\cdot\sharp\Sigma^{m_{k+1}-n_{k+1}-2M-1-r},\\
&=\lambda(I_{M_{k}}(\omega))\cdot(\sharp\Sigma^{m_{k+1}-n_{k+1}-2M-1})^{-q}\cdot\frac{\sharp\Sigma^{m_{k+1}-n_{k+1}-2M-1-r}}{\Sigma^{m_{k+1}-n_{k+1}-2M-1}}.
\end{aligned}
\end{equation*}
Since $\Sigma$ has specification, we have
\begin{equation}\label{425}
\sharp\Sigma^{i}\geq\sharp\Sigma^{i-j}\cdot\sharp\Sigma^{j-M}
\end{equation}
for all $i>j>M$. Hence,
\begin{equation*}
\lambda(I_{N}(\omega))\leq\lambda(I_{M_{k}}(\omega))\cdot(\sharp\Sigma^{m_{k+1}-n_{k+1}-2M-1})^{-q}\cdot(\sharp\Sigma^{r-M})^{-1}.
\end{equation*}
Thus, for every sufficiently small $\delta>0$, for all $N$ large enough, we have
\begin{equation*}
\begin{aligned}
\frac{\lambda(I_{N}(\omega))}{N}
&\geq\frac{-\log\lambda(I_{M_{k}}(\omega))+q\cdot\log\sharp\Sigma^{m_{k+1}-n_{k+1}-2M-1}+\log\sharp\Sigma^{r-M}}{M_{k}+q(m_{k+1}-n_{k+1})+r}\\
&\geq\frac{-\log\lambda(I_{M_{k}}(\omega))+q\cdot(1-\delta)\log\sharp\Sigma^{m_{k+1}-n_{k+1}}+\log\sharp\Sigma^{r-M}}{M_{k}+q(m_{k+1}-n_{k+1})+r}\\
&\geq\frac{-\log\lambda(I_{M_{k}}(\omega))+(1-\delta)\log\sharp\Sigma^{q(m_{k+1}-n_{k+1})+r-M}}{M_{k}+q(m_{k+1}-n_{k+1})+r}\\
&\geq\frac{1}{1+\delta}\cdot\frac{-\log\lambda(I_{M_{k}}(\omega))+(1-\delta)\log\sharp\Sigma^{q(m_{k+1}-n_{k+1})+r-M}}{M_{k}+q(m_{k+1}-n_{k+1})+r-M}.
\end{aligned}
\end{equation*}
The second inequality follows from $\lim\limits_{n\to\infty}\frac{\log\sharp\Sigma^{n}}{n}$ exists and $\lim\limits_{k\to\infty}(m_{k+1}-n_{k+1})=+\infty$.
The third inequality follows from $\eqref{11}$ and $\sharp\Sigma^{r-M}\geq1$. Similar with \textbf{Case 1}, for all $N$ large enough, we have
\begin{equation*}
\frac{\lambda(I_{N}(\omega))}{N}>\frac{(1-\delta)(B-\delta)}{(1+\delta)(B+\delta)}\left(\frac{b(1-a)-a}{(1+b)(b-a)}h(\Sigma)-\delta\right).
\end{equation*}
\textbf{Case 4}. If $q=t_{k+1}$ and $M<r<n_{k+2}-m_{k+1}-t_{k+1}(m_{k+1}-n_{k+1})$, then
\begin{equation*}
\begin{aligned}
\lambda(I_{N}(\omega))&\leq\sharp\Sigma^{n_{k+2}-m_{k+1}-t_{k+1}(m_{k+1}-n_{k+1})-r}\cdot\lambda(I_{N_{k+1}}(\omega))\\
&=\lambda(I_{M_{k}}(\omega))\cdot(\sharp\Sigma^{m_{k+1}-n_{k+1}-2M-1})^{-t_{k+1}}\cdot\frac{\sharp\Sigma^{n_{k+2}-m_{k+1}-t_{k+1}(m_{k+1}-n_{k+1})-r}}{\sharp\Sigma^{n_{k+2}-m_{k+1}-t_{k+1}(m_{k+1}-n_{k+1})}}\\
&\leq\lambda(I_{M_{k}}(\omega))\cdot(\sharp\Sigma^{m_{k+1}-n_{k+1}-2M-1})^{-t_{k+1}}\cdot(\sharp\Sigma^{r-M})^{-1}.
\end{aligned}
\end{equation*}
The last inequality follows from $\eqref{425}$. Similar with \textbf{Case 3}, when $N$ is large enough, we have
\begin{equation*}
\frac{\lambda(I_{N}(\omega))}{N}>\frac{(1-\delta)(B-\delta)}{(1+\delta)(B+\delta)}\left(\frac{b(1-a)-a}{(1+b)(b-a)}h(\Sigma)-\delta\right).
\end{equation*}
For every sufficient small $\delta$, the combination of $\eqref{424}$ and \textbf{Cases 1-4} gives
\begin{equation*}
\frac{\lambda(I_{N}(\omega))}{N\log m}>(\log m)^{-1}\frac{(1-\delta)(B-\delta)}{(1+\delta)(B+\delta)}\left(\frac{b(1-a)-a}{(1+b)(b-a)}h(\Sigma)-\delta\right)
\end{equation*}
for all $N$ large enough. Thus, we have
\begin{equation}\label{427}
\liminf_{N\to\infty}\frac{\lambda(I_{N}(\omega))}{N\log m}\geq\frac{b(1-a)-a}{(1+b)(b-a)}\cdot\frac{h(\Sigma)}{\log m}=\frac{b(1-a)-a}{(1+b)(b-a)}\dim_{\rm H}\Sigma.
\end{equation}
The equality follows from $\dim_{\rm H}\Sigma=\frac{h(\Sigma)}{\log m}$ (see $\rm{\cite[Proposition\ \Rmnum{3}.1]{HF1967}}$).
The combination of $\eqref{420}$, $\eqref{427}$ and Proposition $\ref{29}$ gives $\eqref{426}$.
\textrm{}
\end{proof}
\section{Proof of Theorem $\ref{113}$ and Corollary $\ref{17}$}\label{50}
In this section, we give the proof of Theorem $\ref{113}$ and Corollary $\ref{17}$.
\subsection{Proof of Theorem $\ref{113}$ Case \rmnum{3}}
\begin{proof}[\rm \textbf{Proof}]
We consider six cases.\\ \textbf{Case 1}: $a=b=\infty$. Since $\lim\limits_{N\to\infty}\frac{-\log_{m}\psi(N)}{N}=0$, we have $\overline{L}(\omega)>0$ ($\underline{L}(\omega)>0$) implies
$\liminf\limits_{N\to\infty}\frac{L_{N}(\omega)}{-\log_{m}\psi(N)}=+\infty$ ($\limsup\limits_{N\to\infty}\frac{L_{N}(\omega)}{-\log_{m}\psi(N)}=+\infty$) respectively.
Thus, for any $0<a'<1$ and $b'\geq\frac{a'}{1-a'}$, we have $\mathcal{U}_{\psi_{0}}^{\ast}(a',b')\subset\mathcal{U}_{\psi}^{\ast}(\infty,\infty)$. It follows from Lemma $\ref{55}$ that
\begin{equation*}
\dim_{\rm H}(\mathcal{U}_{\psi}^{\ast}(\infty,\infty))\geq\frac{b'-b'a'-a'}{(1+b')(b'-a')}\dim_{\rm H}\Sigma,\ \forall\ 0<a'<1\ {\rm and}\ b'\geq\frac{a'}{1-a'}.
\end{equation*}
Therefore, $\dim_{\rm H}(\mathcal{U}_{\psi}^{\ast}(\infty,\infty))\geq\dim_{\rm H}\Sigma$. Thus, $\dim_{\rm H}(\mathcal{U}_{\psi}^{\ast}(\infty,\infty))=\dim_{\rm H}\Sigma$.\\
\textbf{Case 2}: $0<a\leq b<\infty$. Firstly, we construct a Cantor-type subset of $\mathcal{U}_{\psi}^{\ast}(a,b)$. Denote by $\Phi(x)=-\log_{m}\psi(x)$.
Fix a integer $P\geq3$. We define the following two sequences $\{n_{k}\}_{k\geq0}$ and $\{d_{k}\}_{k\geq0}$ of integers as follows: choose $n_{0}$ such that
\begin{equation*}
\lfloor b \Phi(n_{0})\rfloor\geq P+2+3M
\end{equation*}
and then take $d_{0}=\lfloor b\Phi(n_{0})\rfloor$, for any $k\geq1$, $\{n_{k}\}$ and $\{d_{k}\}$ are recursively defined as
\begin{equation*}
n_{k}:=\left\lfloor\Phi^{-1}\left(\frac{b}{a}\Phi(n_{k-1})\right)\right\rfloor+P d_{k-1}
\end{equation*}
and
\begin{equation*}
d_{k}:=\lfloor b\Phi(n_{k})\rfloor.
\end{equation*}
Here, $\{n_{k}\}$ is well defined because of the strictly increasing property of the function $\Phi$. Note that
\begin{equation}\label{61}
\lim_{k\to\infty}\frac{\Phi(n_{k})}{\Phi(n_{k-1})}=\frac{b}{a}.
\end{equation}
In fact, since $a\leq b$ and $\Phi$ is strictly increasing, we have $\Phi^{-1}\left(\frac{b}{a}\Phi(n_{k-1})\right)\geq n_{k-1}$. Thus, $\left\lfloor\Phi^{-1}\left(\frac{b}{a}\Phi(n_{k-1})\right)\right\rfloor\geq n_{k-1}$. Therefore,
\begin{equation}\label{62}
n_{k}\leq\left\lfloor\Phi^{-1}\left(\frac{b}{a}\Phi(n_{k-1})\right)\right\rfloor+Pb\cdot\Phi\left(\left\lfloor\Phi^{-1}\left(\frac{b}{a}\Phi(n_{k-1})\right)\right\rfloor\right). \end{equation}
Combining $\eqref{62}$ and $\lim\limits_{N\to\infty}\frac{\Phi(N+\Phi(N))}{\Phi(N)}=1$, we obtain that
\begin{equation*}
\lim_{k\to\infty}\frac{\Phi(n_{k})}{\Phi\left(\left\lfloor\Phi^{-1}\left(\frac{b}{a}\Phi(n_{k-1})\right)\right\rfloor\right)}=1.
\end{equation*}
Thus,
\begin{equation*}
\lim_{k\to\infty}\frac{\Phi(n_{k})}{\Phi(n_{k-1})}=\frac{b}{a}.
\end{equation*}
In the following, we construct a Cantor-type subset of $\mathcal{U}_{\psi}^{\ast}(a,b)$. Denote by $B=h(\Sigma):=\lim\limits_{n\to\infty}\frac{\log\sharp\Sigma^{n}}{n}$. Since $B>0$, we have $\Sigma^{1}\setminus\{0\}\neq\emptyset$.
Fix $\omega\in\Sigma^{1}\setminus\{0\}$. For each $k\in\mathbb{N}$, we write
\begin{equation*}
n_{k}-n_{k-1}=d_{k-1}(l_{k}+1)+r_{k},
\end{equation*}
where
\begin{equation*}
l_{k}=\left\lfloor\frac{n_{k}-n_{k-1}}{d_{k-1}}\right\rfloor-1, \quad 0\leq r_{k}<d_{k-1}.
\end{equation*}
Let
\begin{equation*}
v_{k}=\begin{cases} 0^{r_{k}}, &{\rm if}\ 0\leq r_{k}\leq P, \\ \omega0^{r_{k}-1}, &{\rm if}\ P<r_{k}<d_{k-1}.
\end{cases}
\end{equation*}
Since $\Sigma$ has specification, for any $u_{k}^{(1)},u_{k}^{(2)},\cdots,u_{k}^{(l_{k})}\in\Sigma^{d_{k-1}-1-2M}$, there exists $\varepsilon_{k}^{(1)},\varepsilon_{k}^{(2)},$\\ $\cdots,\varepsilon_{k}^{(2l_{k}+2)}\in\Sigma^{M}$, such that
\begin{equation*}
\omega\varepsilon_{k}^{(1)}0^{d_{k-1}-1-3M}\varepsilon_{k}^{(2)}\omega\varepsilon_{k}^{(3)}u_{k}^{(1)}\varepsilon_{k}^{(4)}\omega\varepsilon_{k}^{(5)}u_{k}^{(2)}\varepsilon_{k}^{(6)}\cdots\omega\varepsilon_{k}^{(2l_{k}+1)}u_{k}^{(l_{k})}\varepsilon_{k}^{(2l_{k}+2)}v_{k}\in\Sigma^{\ast}.
\end{equation*}
Similar with the proof of Lemma $\ref{55}$, we choose $\varepsilon_{k}^{(1)},\cdots,\varepsilon_{k}^{(2l_{k}+2)}\in\Sigma^{M}$ that satisfies
\begin{enumerate}[(i)]
\item For any $0\leq i\leq l_{k}$, $\varepsilon_{k}^{(2i+1)}$ is the minimal word (in the sense of lexicographical order) such that $\omega \varepsilon_{k}^{(1)}0^{d_{k-1}-1-3M}\varepsilon_{k}^{(2)}\omega\varepsilon_{k}^{(3)}\cdots\varepsilon_{k}^{(2i)}\omega\varepsilon_{k}^{(2i+1)}u_{k}^{(i)}\in\Sigma^{\ast}$;
\item For any $1\leq i\leq l_{k}$, $\varepsilon_{k}^{(2i)}$ is the minimal word (in the sense of lexicographical order) such that $\omega \varepsilon_{k}^{(1)}0^{d_{k-1}-1-3M}\varepsilon_{k}^{(2)}\omega\varepsilon_{k}^{(3)}u_{k}^{(1)}\varepsilon_{k}^{(4)}\cdots\varepsilon_{k}^{(2i-1)}u_{k}^{(i-1)}\varepsilon_{k}^{(2i )}\omega\in\Sigma^{\ast}$;
\item $\varepsilon_{k}^{(2l_{k}+2)}$ is the minimal word (in the sense of lexicographical order) such that \\ $\omega\varepsilon_{k}^{(1)}0^{d_{k-1}-1-3M}\varepsilon_{k}^{(2)}\omega\varepsilon_{k}^{(3)}u_{k}^{(1)}\varepsilon_{k}^{(4)}\omega\varepsilon_{k}^{(5)}u_{k}^{(2)}\varepsilon_{k}^{(6)}\cdots\omega\varepsilon_{k}^{(2l_{k}+1)}u_{k}^{(l_{k})}\varepsilon_{k}^{(2l_{k}+2)}v_{k}\in\Sigma^{\ast}$.
\end{enumerate}
According to the above rules, for every $u_{k}^{(1)},\cdots,u_{k}^{(l_{k})}\in\Sigma^{d_{k-1}-1-2M}$, the choice of $\varepsilon_{k}^{(1)},\cdots,\varepsilon_{k}^{(2l_{k}+2)}\in\Sigma^{M}$ is determined. Put
\begin{equation*}
\begin{aligned}
\mathcal{W}_{k}=\left\{\omega\varepsilon_{k}^{(1)}0^{d_{k-1}-1-3M}\varepsilon_{k}^{(2)}\omega\varepsilon_{k}^{(3)}u_{k}^{(1)}\varepsilon_{k}^{(4)}\omega\varepsilon_{k}^{(5)}u_{k}^{(2)}\varepsilon_{k}^{(6)}\cdots\omega\varepsilon_{k}^{(2l_{k}+1)}u_{k}^{(l_{k})}\varepsilon_{k}^{(2l_{k}+2)}v_{k}\in\Sigma^{\ast}:  \right. \\ \left. u_{k}^{(i)}\in\Sigma^{d_{k-1}-1-2M},\forall\ 1\leq i\leq l_{k} \right\}.
\end{aligned}
\end{equation*}
Note that every element of $\mathcal{W}_{k}$ is of length $n_{k}-n_{k-1}-M$. Furthermore, since $\Sigma$ has specification,
for any $k\geq1$ and any $u_{i}\in\mathcal{W}_{i}(1\leq i\leq k)$, there exists $\varepsilon_{1},\cdots,\varepsilon_{k}\in\Sigma^{M}$, such that
$0^{n_{0}}\varepsilon_{1}u_{1}\varepsilon_{2}u_{2}\cdots\varepsilon_{k}u_{k}\in\Sigma^{\ast}$. Here we choose $\varepsilon_{1},\cdots,\varepsilon_{k}\in\Sigma^{M}$, which satisfies for any $1\leq i\leq k$, $\varepsilon_{i}$ is the minimal word (in the sense of lexicographical order) such that $0^{n_{0}}\varepsilon_{1}u_{1}\cdots\varepsilon_{i}u_{i}\in\Sigma^{\ast}$. Let
\begin{equation*}
G_{k}=\{0^{n_{0}}\varepsilon_{1}u_{1}\varepsilon_{2}u_{2}\cdots\varepsilon_{k}u_{k}:u_{i}\in\mathcal{W}_{i},\forall\ 1\leq i\leq k\}.
\end{equation*}
Note that every element of $G_{k}$ is of length $n_{k}$. Define
\begin{equation*}
E_{0}=I_{n_{0}}(0^{n_{0}})
\end{equation*}
and
\begin{equation*}
E_{k}=\bigcup_{w\in G_{k}}I_{n_{k}}(w),\quad \forall\ k\geq1.
\end{equation*}
The desired Cantor-type subset of $\mathcal{U}_{\psi}^{\ast}(a,b)$ is defined as
\begin{equation*}
\mathcal{U}_{P}^{\ast}(a,b):=\bigcap_{k=0}^{\infty}E_{k}.
\end{equation*}
Now, we show that $\mathcal{U}_{P}^{\ast}(a,b)$ is the subset of $\mathcal{U}_{\psi}^{\ast}(a,b)$.
For any $\omega\in\mathcal{U}_{P}^{\ast}(a,b)$, since the elements of $\mathcal{U}_{P}^{\ast}(a,b)$ are of the form $0^{n_{0}}\varepsilon_{1}u_{1}\cdots\varepsilon_{k}u_{k}\cdots$
with $u_{k}\in\mathcal{W}_{k}$ $(k\geq1)$ and $\varepsilon_{k}\in\Sigma^{M}$ is the minimal word (in the sense of lexicographical order) such that $0^{n_{0}}\varepsilon_{1}u_{1}\cdots\varepsilon_{k}u_{k}\in\Sigma^{\ast}$,
by the definition of $\mathcal{W}_{k}$, we have the following two facts:
\begin{enumerate}[(i)]
\item If $n_{k}<N\leq n_{k}+2M$, then
\begin{equation*}
d_{k-1}-1-3M\leq L_{N}(\omega)\leq\max\{d_{k-1}-1+F+M,d_{k}-1-M\}.
\end{equation*}
\item If $n_{k}+2M<N\leq n_{k+1}$, then
\begin{equation*}
d_{k}-1-3M\leq L_{N}(\omega)\leq d_{k}-1+F+M.
\end{equation*}
\end{enumerate}
Combining these and $\eqref{61}$, we obtain that
\begin{equation*}
\liminf_{N\to\infty}\frac{L_{N}(\omega)}{\Phi(N)}\geq\lim_{k\to\infty}\frac{d_{k-1}}{\Phi(n_{k})}=a
\end{equation*}
and
\begin{equation*}
\limsup_{N\to\infty}\frac{L_{N}(\omega)}{\Phi(N)}\leq\lim_{k\to\infty}\frac{d_{k}}{\Phi(n_{k})}=b.
\end{equation*}
Note that
\begin{equation*}
d_{k-1}-1-3M\leq L_{n_{k}+1}(\omega)\leq d_{k-1}-1+F+M,
\end{equation*}
thus
\begin{equation*}
\lim_{k\to\infty}\frac{L_{n_{k}+1}(\omega)}{\Phi(n_{k}+1)}=\lim_{k\to\infty}\frac{d_{k-1}}{\Phi(n_{k})}=a.
\end{equation*}
Furthermore,
\begin{equation*}
\lim_{k\to\infty}\frac{L_{n_{k}+2M+1}(\omega)}{\Phi(n_{k}+2M+1)}=\lim_{k\to\infty}\frac{d_{k}}{\Phi(n_{k})}=b.
\end{equation*}
Therefore, we have $\liminf\limits_{N\to\infty}\frac{L_{N}(\omega)}{\Phi(N)}=a$ and $\limsup\limits_{N\to\infty}\frac{L_{N}(\omega)}{\Phi(N)}=b$.
Thus, $\mathcal{U}_{P}^{\ast}(a,b)\subset\mathcal{U}_{\psi}^{\ast}(a,b)$. Similar with the proof of Lemma $\ref{55}$,
we assign a mass distribution on $\mathcal{U}_{P}^{\ast}(a,b)$. We first assign a mass distribution on all cylinders.
Let $\lambda(I_{k}(0^{k}))=1$ for any $1\leq k\leq n_{0}$. For any $I_{n_{1}}\in E_{1}$, let
\begin{equation*}
\lambda(I_{n_{1}})=\frac{1}{\sharp\mathcal{W}_{1}}\lambda(I_{n_{0}}(0^{n_{0}}))=\frac{1}{\sharp\mathcal{W}_{1}}.
\end{equation*}
Assume that the measure on $I_{n_{k}}\in E_{k}$ $(k\geq1)$ is well defined, then the measure on $I_{n_{k+1}}\in E_{k+1}$ is defined as
\begin{equation*}
\lambda(I_{n_{k+1}})=\frac{1}{\sharp\mathcal{W}_{k+1}}\lambda(I_{n_{k}})=\prod_{j=1}^{k+1}\frac{1}{\sharp\mathcal{W}_{j}}.
\end{equation*}
For any $n_{k}<N<n_{k+1}$, let
\begin{equation*}
\lambda(I_{N})=\sum_{I_{n_{k+1}}\in E_{k+1}:I_{N}\cap I_{n_{k+1}}\neq\emptyset}\lambda(I_{n_{k+1}}).
\end{equation*}
Note that
\begin{equation*}
\lambda(I_{N})=\sum_{I_{N+1}:I_{N+1}\cap I_{N}\neq\emptyset}\lambda(I_{N+1})
\end{equation*}
for all $N\geq1$, thus, $\lambda$ can be extended to a Borel probability measure on $\mathcal{U}_{P}^{\ast}(a,b)$.
Lastly we estimate the Hausdorff dimension of $\mathcal{U}_{P}^{\ast}(a,b)$. We show that
\begin{equation}\label{63}
\dim_{\rm H}\mathcal{U}_{P}^{\ast}(a,b)\geq\left(1-\frac{2}{P}\right)\dim_{\rm H}\Sigma.
\end{equation}
Then we have $\dim_{\rm H}\mathcal{U}_{\psi}^{\ast}(a,b)\geq(1-\frac{2}{P})\dim_{\rm H}\Sigma$ for every $P\geq3$.
Letting $P\to\infty$, we obtain that $\dim_{\rm H}\mathcal{U}_{\psi}^{\ast}(a,b)=\dim_{\rm H}\Sigma$.
To prove $\eqref{63}$, in view of Proposition $\ref{29}$, we only need to prove that for any $\omega\in\mathcal{U}_{P}^{\ast}(a,b)$,
\begin{equation*}
\liminf_{N\to\infty}\frac{\log\lambda(I_{N}(\omega))}{\log|I_{N}(\omega)|}\geq\left(1-\frac{2}{P}\right)\dim_{\rm H}\Sigma,
\end{equation*}
where $I_{N}(\omega)$ is the $N$-th cylinder containing $\omega$. By the definition of $\lambda$, we consider two cases.\\
Case \Rmnum{1}: $n_{k}\leq N\leq n_{k}+d_{k}$. Then
\begin{equation*}
\lambda(I_{N}(\omega))=\lambda(I_{n_{k}}(\omega))=\prod_{j=1}^{k}\frac{1}{\sharp\mathcal{W}_{j}}.
\end{equation*}
By Proposition $\ref{210}$, we have
\begin{equation*}
\begin{aligned}
\liminf_{N\to\infty}\frac{\log \lambda(I_{N}(\omega))}{\log|I_{N}(\omega)|}&=\liminf_{N\to\infty}\frac{-\log\lambda(I_{N}(\omega))}{N\log m}\geq\frac{1}{\log m}\liminf_{k\to\infty}\frac{\sum_{j=1}^{k}\log\sharp\mathcal{W}_{j}}{n_{k}+d_{k}}\\
&=\frac{1}{\log m}\liminf_{k\to\infty}\frac{\sum_{j=1}^{k}\log\sharp\mathcal{W}_{j}}{n_{k}}\geq\frac{1}{\log m}\liminf_{k\to\infty}\frac{\log\sharp\mathcal{W}_{k+1}}{n_{k+1}-n_{k}}.
\end{aligned}
\end{equation*}
Since $\sharp\mathcal{W}_{k+1}=\left(\sharp\Sigma^{d_{k}-2M-1}\right)^{l_{k+1}}$ and $B=\lim\limits_{n\to\infty}\frac{\log\sharp\Sigma^{n}}{n}$, we obtain that
\begin{equation}\label{64}
\lim_{k\to\infty}\frac{\log\sharp\mathcal{W}_{k+1}}{l_{k+1}d_{k}}=B.
\end{equation}
Thus,
\begin{equation*}
\liminf_{N\to\infty}\frac{\log\lambda(I_{N}(\omega))}{\log|I_{N}(\omega)|}\geq\frac{h(\Sigma)}{\log m}\liminf_{k\to\infty}\frac{l_{k+1}d_{k}}{n_{k+1}-n_{k}}.
\end{equation*}
In view of
\begin{equation*}
n_{k+1}-n_{k}=d_{k}(l_{k+1}+1)+r_{k+1}<d_{k}(l_{k+1}+2)
\end{equation*}
and
\begin{equation*}
l_{k+1}:=\left\lfloor\frac{n_{k+1}-n_{k}}{d_{k}}\right\rfloor-1\geq P-2,
\end{equation*}
we obtain that
\begin{equation*}
\liminf_{N\to\infty}\frac{\log\lambda(I_{N}(\omega))}{\log|I_{N}(\omega)|}\geq\frac{h(\Sigma)}{\log m}\cdot\liminf_{k\to\infty}\frac{l_{k+1}}{l_{k+1}+2}\geq\left(1-\frac{2}{P}\right)\frac{h(\Sigma)}{\log m}.
\end{equation*}
Case \Rmnum{2}: $n_{k}+d_{k}<N<n_{k+1}$. Since $n_{k+1}=n_{k}+(l_{k+1}+1)d_{k}+r_{k+1}<n_{k}+(l_{k+1}+2)d_{k}$, there exists $1\leq l\leq l_{k+1}+1$, such that $n_{k}+ld_{k}\leq N<n_{k}+(l+1)d_{k}$. Therefore,
\begin{equation*}
\lambda(I_{N}(\omega))\leq\lambda(I_{n_{k}+ld_{k}}(\omega))=\left(\sharp\Sigma^{d_{k}-1-2M}\right)^{-(l-1)}\lambda(I_{n_{k}}(\omega)).
\end{equation*}
By Proposition $\ref{210}$, we have
\begin{equation*}
\begin{aligned}
\liminf_{N\to\infty}\frac{\log\lambda(I_{N}(\omega))}{\log|I_{N}(\omega)|}&\geq\liminf_{k\to\infty}\frac{-\log\lambda(I_{n_{k}}(\omega))+(l-1)\log\sharp\Sigma^{d_{k}-1-2M}}{ N\log m}\\&\geq\frac{1}{\log m}\liminf_{k\to\infty}\frac{-\log\lambda(I_{n_{k}}(\omega))+(l-1)\log\sharp\Sigma^{d_{k}-1-2M}}{n_{k}+(l+1)d_{k}}\\
&=\frac{1}{\log m}\liminf_{k\to\infty}\frac{-\log\lambda(I_{n_{k}}(\omega))+(l+1)\log\sharp\Sigma^{d_{k}-1-2M}}{n_{k}+(l+1)d_{k}}.
\end{aligned}
\end{equation*}
The above equality follows from $\lim\limits_{k\to\infty}\frac{\log\sharp\Sigma^{d_{k}-1-2M}}{d_{k}}\leq\log m$ and $\lim\limits_{k\to\infty}\frac{d_{k}}{n_{k}}=0$.
Since $B=\lim\limits_{n\to\infty}\frac{\log\sharp\Sigma^{n}}{n}$ and $\lim\limits_{k\to\infty}d_{k}=+\infty$, for any $\delta\in(0,B)$, we have $\log\sharp\Sigma^{d_{k}-1-2M}>(B-\delta)d_{k}$ for all $k$ large enough. Thus,
\begin{equation}\label{65}
\begin{aligned}
\liminf_{N\to\infty}\frac{\log\lambda(I_{N}(\omega))}{\log|I_{N}(\omega)|}&\geq\frac{1}{\log m}\liminf_{k\to\infty}\frac{-\log \lambda(I_{n_{k}}(\omega))+(B-\delta)(l+1)d_{k} }{n_{k}+(l+1)d_{k}}\\ &=\frac{B-\delta}{\log m}\liminf_{k\to\infty}\frac{(B-\delta)^{-1}(-\log \lambda(I_{n_{k}}(\omega)))+(l+1)d_{k}}{n_{k}+(l+1)d_{k}}\\
&\geq\frac{B-\delta}{\log m}\liminf_{k\to\infty}\frac{(B+\delta)^{-1}(-\log\lambda(I_{n_{k}}(\omega)))+(l+1)d_{k}}{n_{k}+(l+1)d_{k}}.
\end{aligned}
\end{equation}
Note that
\begin{equation*}
\begin{aligned}
\limsup_{k\to\infty}\frac{-\log\lambda(I_{n_{k}}(\omega))}{n_{k}}=\limsup_{k\to\infty}\frac{\sum_{j=1}^{k}\log\sharp\mathcal{W}_{j}}{n_{k}}
&\leq\limsup_{k\to\infty}\frac{\log\sharp\mathcal{W}_{k+1}}{n_{k+1}-n_{k}}\\&=\limsup_{k\to\infty}\frac{Bl_{k+1}d_{k}}{n_{k+1}-n_{k}}\leq B,
\end{aligned}
\end{equation*}
the second equality is due to $\eqref{64}$ and the last inequality is due to $n_{k+1}-n_{k}\geq(l_{k+1}+1)d_{k}$. Therefore, for all $k$ large enough, we have
\begin{equation}\label{66}
(B+\delta)^{-1}(-\log\lambda(I_{n_{k}}(\omega)))<n_{k}.
\end{equation}
Combining $\eqref{65}$, $\eqref{66}$ and the inequality $\frac{y+s}{z+s}\geq\frac{y}{z},0\leq y<z,s\geq0$, we obtain that
\begin{equation*}
\liminf_{N\to\infty}\frac{\log\lambda(I_{N}(\omega))}{\log|I_{N}(\omega)|}\geq\frac{B-\delta}{\log m}\liminf_{k\to\infty}\frac{(B+\delta)^{-1}(-\log\lambda(I_{n_{k}}(\omega))) }{n_{k}}.
\end{equation*}
Furthermore, by the proof of Case \Rmnum{1}, we know that
\begin{equation*}
\liminf_{k\to\infty}\frac{-\log\lambda(I_{n_{k}}(\omega))}{n_{k}}\geq\left(1-\frac{2}{P}\right)h(\Sigma).
\end{equation*}
Therefore,
\begin{equation*}
\liminf_{N\to\infty}\frac{\log\lambda(I_{N}(\omega))}{\log|I_{N}(\omega)|}\geq\frac{B-\delta}{B+\delta}\left(1-\frac{2}{P}\right)\frac{ h(\Sigma)}{\log m}.
\end{equation*}
Letting $\delta\to0$, we obtain that
\begin{equation*}
\liminf_{N\to\infty}\frac{\log\lambda(I_{N}(\omega))}{\log|I_{N}(\omega)|}\geq\left(1-\frac{2}{P}\right)\frac{h(\Sigma)}{\log m}.
\end{equation*}
The combination of Case \Rmnum{1} and Case \Rmnum{2} gives
\begin{equation*}
\liminf_{N\to\infty}\frac{\log\lambda(I_{N}(\omega))}{\log|I_{N}(\omega)|}\geq\left(1-\frac{2}{P}\right)\frac{h(\Sigma)}{\log m}=\left(1-\frac{2}{P}\right)\dim_{\rm H}\Sigma.
\end{equation*}
The equality follows from $\dim_{\rm H}\Sigma=\frac{h(\Sigma)}{\log m}$ (see $\rm{\cite[Proposition\ \Rmnum{3}.1]{HF1967}}$). Similar arguments apply to the remaining cases.
We only give the constructions for the sequences $\{n_{k}\}_{k\geq0}$ and $\{d_{k}\}_{k\geq0}$.\\
\textbf{Case 3}: $0<a<b=\infty$. Fix a integer $P\geq3$. We choose $n_{0}$ such that
\begin{equation*}
\left\lfloor a\Phi(n_{0})\log\frac{n_{0}}{\Phi(n_{0})}\right\rfloor\geq P+2+2M
\end{equation*}
and then take $d_{0}=\lfloor a\Phi(n_{0})\log\frac{n_{0}}{\Phi(n_{0})}\rfloor$, for any $k\geq1$, $\{n_{k}\}$ and $\{d_{k}\}$ are recursively defined as
\begin{equation*}
n_{k}:=\left\lfloor\Phi^{-1}\left(\Phi(n_{k-1})\log\frac{n_{k-1}}{\Phi(n_{k-1})}\right)\right\rfloor+Pd_{k-1}\ {\rm and}\ d_{k}:=\left\lfloor a\Phi(n_{k})\log\frac{n_{k}}{\Phi(n_{k})}\right\rfloor.
\end{equation*}
\textbf{Case 4}: $0=a<b<\infty$. Fix a integer $P\geq3$. We choose $n_{0}$ such that
\begin{equation*}
\lfloor b\Phi(n_{0})\rfloor\geq P+2+2M
\end{equation*}
and then take $d_{0}=\lfloor b\Phi(n_{0})\rfloor$, for any $k\geq1$, $\{n_{k}\}$ and $\{d_{k}\}$ are recursively defined as
\begin{equation*}
n_{k}:=\lfloor\Phi^{-1}((k-1)\Phi(n_{k-1}))\rfloor+Pd_{k-1}\ {\rm and}\ d_{k}:=\lfloor b\Phi(n_{k})\rfloor.
\end{equation*}
\textbf{Case 5}: $0=a<b=\infty$. Fix a integer $P\geq3$. We choose $n_{0}$ such that
\begin{equation*}
\lfloor\sqrt{n_{0}\Phi(n_{0})}\rfloor\geq P+2+2M
\end{equation*}
and then take $d_{0}=\lfloor\sqrt{n_{0}\Phi(n_{0})}\rfloor$, for any $k\geq1$, $\{n_{k}\}$ and $\{d_{k}\}$ are recursively defined as
\begin{equation*}
n_{k}:=\lfloor\Phi^{-1}(n_{k-1}\Phi(n_{k-1}))\rfloor+Pd_{k-1}\ {\rm and}\ d_{k}:=\lfloor\sqrt{n_{k}\Phi(n_{k})}\rfloor.
\end{equation*}
\textbf{Case 6}: $a=b=0$. Fix a integer $P\geq3$. We choose $n_{0}$ such that
\begin{equation*}
\lfloor\sqrt{\Phi(n_{0})}\rfloor\geq P+2+2M
\end{equation*}
and then take $d_{0}=\lfloor\sqrt{\Phi(n_{0})}\rfloor$, for any $k\geq1$, $\{n_{k}\}$ and $\{d_{k}\}$ are recursively defined as
\begin{equation*}
n_{k}:=n_{k-1}+Pd_{k-1}+1\ {\rm and}\ d_{k}:=\lfloor\sqrt{\Phi(n_{k})}\rfloor.
\end{equation*}
\textrm{}
\end{proof}
\subsection{Proof of Theorem $\ref{113}$ Cases \rmnum{1}\ and \rmnum{2}}
\begin{proof}[\rm \textbf{Proof}]
Case \rmnum{1}\quad Since $0<\tau<+\infty$, we have
\begin{equation}\label{612}
\limsup\limits_{N\to\infty}\frac{L_{N}(\omega)}{-\log_{m}\psi(N)}=\tau^{-1}\overline{L}(\omega)\ {\rm and}\ \liminf\limits_{N\to\infty}\frac{L_{N}(\omega)}{-\log_{m}\psi(N)}=\tau^{-1}\underline{L}(\omega).
\end{equation}
Thus,
\begin{equation}\label{68}
\mathcal{U}_{\psi}(a,b)=\mathcal{U}_{\psi_{0}}(\tau a,\tau b)
\end{equation}
and
\begin{equation}\label{69}
\mathcal{U}_{\psi}^{\ast}(a,b)=\mathcal{U}_{\psi_{0}}^{\ast}(\tau a,\tau b).
\end{equation}
Recall that $\psi_{0}(N)=m^{-N}$. In view of $\eqref{612}$, $\eqref{68}$, $\eqref{69}$ and Lemma $\ref{55}$, we immediately obtain (2) of Theorem $\ref{113}$ Case \rmnum{1}. \\
The remaining case is $a\geq\tau^{-1}$. If $b<+\infty$, the upshot of  $\eqref{41}$, $\eqref{68}$ and Proposition $\ref{211}$ is that $\mathcal{U}_{\psi}(a,b)=\emptyset$. Therefore,
$\mathcal{U}_{\psi}^{\ast}(a,b)=\mathcal{U}_{\psi}(a,b)=\emptyset$. \\ If $b=+\infty$, in view of $\eqref{68}$, we have
$\mathcal{U}_{\psi}(a,b)\subset\{\omega\in\Sigma:\overline{L}(\omega)=+\infty\}$. Since $\dim_{\rm H}(\{\omega\in\Sigma:\overline{L}(\omega)=+\infty\})=0$, we obtain that $\dim_{\rm H}\mathcal{U}_{\psi}(a,b)=0$.
Therefore, $\dim_{\rm H}\mathcal{U}_{\psi}^{\ast}(a,b)=\dim_{\rm H}\mathcal{U}_{\psi}(a,b)=0$.\\ What is more, by Proposition $\ref{211}$, we know that $\underline{L}(\omega)>1$ is equivalent to $\underline{L}(\omega)=+\infty$.
Combining this and $\eqref{69}$, we have $\mathcal{U}_{\psi}^{\ast}(a,b)=\emptyset$ when $\tau^{-1}<a<+\infty$. Lastly, if $a>\tau^{-1}$
and $b=+\infty$, in view of $\eqref{68}$ and Proposition $\ref{211}$, we obtain that $\mathcal{U}_{\psi}(a,b)=\bigcup_{n=0}^{\infty}\sigma^{-n}(\{0^{\infty}\})$.\\
Case \rmnum{2}\quad Since $\tau=+\infty$, we have
\begin{equation}\label{610}
\limsup\limits_{N\to\infty}\frac{L_{N}(\omega)}{-\log_{m}\psi(N)}>0\ {\rm implies}\ \overline{L}(\omega)=+\infty.
\end{equation}
and
\begin{equation}\label{611}
\liminf\limits_{N\to\infty}\frac{L_{N}(\omega)}{-\log_{m}\psi(N)}>0\ {\rm implies}\ \underline{L}(\omega)=+\infty.
\end{equation}
If $b>0$, it follows from $\eqref{610}$ that $\mathcal{U}_{\psi}(a,b)\subset\{\omega\in\Sigma:\overline{L}(\omega)=+\infty\}$.
Since $\dim_{\rm H}(\{\omega\in\Sigma:\overline{L}(\omega)=+\infty\})=0$, we obtain that $\dim_{\rm H}\mathcal{U}_{\psi}(a,b)=0$. Hence,
$$\dim_{\rm H}\mathcal{U}_{\psi}^{\ast}(a,b)=\dim_{\rm H}\mathcal{U}_{\psi}(a,b)=0.$$
The remaining case is $b=0$, more precisely $a=b=0$ (we assume that $0\leq a\leq b$). Note that $\mathcal{U}_{\psi_{0}}^{\ast}(a',b')\subset\mathcal{U}^{\ast}_{\psi}(0,0)$ for all $0\leq a'<1$ and $\frac{a'}{1-a'}\leq b'<+\infty$,
it follows from Proposition $\ref{55}$ that $\dim_{\rm H}\mathcal{U}^{\ast}_{\psi}(0,0)\geq\frac{b'-b'a'-a'}{(1+b')(b'-a')}\dim_{\rm H}\Sigma$ for all $0\leq a'<1$ and $\frac{a'}{1-a'}\leq b'<+\infty$.
Thus, $\dim_{\rm H}\mathcal{U}^{\ast}_{\psi}(0,0)=\dim_{\rm H}\Sigma.$ Furthermore, when $a>0$, it follows from $\eqref{611}$ and Proposition $\ref{211}$ that
$$\mathcal{U}_{\psi}(a,b)\subset\bigcup_{n=0}^{\infty}\sigma^{-n}(\{0^{\infty}\}).$$
\textrm{}
\end{proof}
\subsection{Proof of Corollary $\ref{17}$}
\begin{proof}[\rm \textbf{Proof}]
Recall that $\psi_{0}(N)=m^{-N}$, thus $\tau=\lim\limits_{N\to\infty}\frac{-\log_{m}\psi_{0}(N)}{N}=1$.\\
(\rmnum{1}) If $a>1$, by Proposition $\ref{211}$, we have $\mathcal{U}_{\psi_{0}}(a)=\bigcup\limits_{n=0}^{\infty}\sigma^{-n}(\{0^{\infty}\})$.\\
(\rmnum{2}) Since $\mathcal{U}_{\psi_{0}}(0)=\Sigma$, we only need to consider $0<a\leq1$. If $0<a<1$, it follows from Lemma $\ref{55}$ (\rmnum{2}) that
$$\dim_{\rm H}\mathcal{U}_{\psi_{0}}(a)\geq\frac{b(1-a)-a}{(1+b)(b-a)}\dim_{\rm H}\Sigma,\ \forall\ b\geq\frac{a}{1-a}.$$
An easy calculation shows that the maximum of the right hand side of above inequality is attained for $b=\frac{2a}{1-a}$.
Thus, $\dim_{\rm H}\mathcal{U}_{\psi_{0}}(a)\geq\left(\frac{1-a}{1+a}\right)^{2}\dim_{\rm H}\Sigma$. On the other hand, for any $\varrho>0$, in view of
Lemma $\ref{55}$ (\rmnum{1}) and the denseness of the set of rational numbers, we have that
\begin{equation*}
\begin{aligned}
&\quad\mathcal{U}_{\psi_{0}}(a)\cap\{\omega\in\Sigma:\overline{L}(\omega)<+\infty\}\\
&\subset\mathcal{U}_{\psi_{0}}(a,\frac{a}{1-a})\cup\bigcup_{b\in[\frac{a}{1-a},+\infty)\cap\mathbb{Q}}\{\omega\in\Sigma:\underline{L}(\omega)\geq a,b\leq\overline{L}(\omega)\leq b+\varrho\}.
\end{aligned}
\end{equation*}
By Lemma $\ref{55}$ (\rmnum{2}) and the countable stability of Hausdorff dimension, we obtain that
$$\dim_{\rm H}\mathcal{U}_{\psi_{0}}(a)\leq\sup_{b\in[\frac{a}{1-a},+\infty)}\left(\frac{b(1-a)-a}{(1+b)(b-a)}+\frac{\varrho(1-a)^{2}}{a^{3}}\right)\dim_{\rm H}\Sigma.$$
Similar to the above proof, the maximum of the right hand side of above inequality is attained for $b=\frac{2a}{1-a}$. Therefore,
$\dim_{\rm H}\mathcal{U}_{\psi_{0}}(a)\leq\left(\frac{1-a}{1+a}\right)^{2}\dim_{\rm H}\Sigma$. Therefore,
$$\dim_{\rm H}\mathcal{U}_{\psi_{0}}(a)=\left(\frac{1-a}{1+a}\right)^{2}\dim_{\rm H}\Sigma.$$ The remaining case is $a=1$, it follows from Propositions
$\ref{211}$ and $\ref{51}$ that $\mathcal{U}_{\psi_{0}}(1)=\mathcal{U}_{\psi_{0}}(1,+\infty)$. An easy cover argument shows that
$\dim_{\rm H}(\{\omega\in\Sigma:\overline{L}(\omega)=+\infty\})=0$, hence $\dim_{\rm H}\mathcal{U}_{\psi_{0}}(1)=0$.
\textrm{}
\end{proof}
\subsection*{Acknowledgements}
It was supported by NSFC12271176 and Guangdong Natural Science Foundation 2023A1515010691.

\end{document}